\numberwithin{equation}{section}
\newtheorem{theorem}{Theorem}[section]
\newtheorem{corollary}[theorem]{Corollary}
\newtheorem{lemma}[theorem]{Lemma}
\newtheorem{remark}[theorem]{Remark}
\newtheorem{definition}[theorem]{Definition}
\def\ZZ{\mathbb Z}
\def\n{\nabla}
\def\ZZ{\mathbb{Z}}
\renewcommand{\eqref}[1]{{\rm (\ref{#1})}}
\begin{document}
\title[Quantum cluster algebra of type $A_2^{(1)}$]
{Explicit cluster multiplication formulas for the quantum cluster algebra of type $A_2^{(1)}$}

\author{Danting Yang, Xueqing Chen, Ming Ding and Fan Xu}
\address{School of Mathematics and Information Science\\
Guangzhou University, Guangzhou 510006, P.R.China}
\email{dantingyang@e.gzhu.edu.cn (D.Yang)}
\address{Department of Mathematics,
 University of Wisconsin-Whitewater\\
800 W. Main Street, Whitewater, WI.53190. USA}
\email{chenx@uww.edu (X.Chen)}
\address{School of Mathematics and Information Science\\
Guangzhou University, Guangzhou 510006, P.R.China}
\email{dingming@gzhu.edu.cn (M.Ding)}
\address{Department of Mathematical Sciences\\
Tsinghua University\\
Beijing 100084, P.~R.~China} \email{fanxu@mail.tsinghua.edu.cn (F.Xu)}





\keywords{Quantum  cluster algebra, quantum cluster variable, cluster multiplication formula, positive basis}

\maketitle

\begin{abstract}
Let $Q$ be an affine  quiver of  type $A_2^{(1)}$. We  explicitly construct the cluster multiplication formulas for the  quantum cluster algebra of $Q$ with principal coefficients. As applications, we obtain: (1)\  an exact expression for every quantum cluster variable as  a polynomial in terms of the quantum cluster variables in  clusters which are one-step mutations from the initial cluster; (2)\  an explicit bar-invariant positive $\mathbb{ZP}$-basis.
\end{abstract}


\section{Background}
Quantum cluster algebras, as the noncommutative deformations of cluster algebras defined in  \cite{ca1}, were
introduced by Berenstein and Zelevinsky  \cite{berzel} to provide an algebraic framework for a better understanding of dual canonical bases in
quantum coordinate rings.  A  (quantum)  cluster algebra   is generated by so-called (quantum) cluster variables, which are grouped into a collection of finite subsets called clusters. The remarkable Laurent phenomenon says
that any (quantum) cluster variable is a Laurent polynomial in  the (quantum) cluster variables belonging
to every cluster.  We say that an  element of a (quantum)  cluster algebra is  positive if its Laurent expansion in every cluster has non-negative coefficients.

 Cluster multiplication formulas play a crucial role in the study of cluster theory and  attract a lot of attention. In the setting of acyclic cluster algebras, Sherman and Zelevinsky \cite{SZ} firstly gave  the  cluster multiplication formulas in rank 2 cluster algebras of finite and affine types, which was generalized to affine type $A_{2}^{(1)}$ by Cerulli \cite{CI-1}.  Caldero and Keller \cite{ck}  constructed the cluster multiplication formulas   for finite types. This result  was generalized to affine types by Hubery \cite{Hubery} and  acyclic types by Xiao and Xu \cite{XX}, and by Xu \cite{X}.  In the setting of acyclic quantum cluster algebras, Ding and Xu \cite{dx} firstly constructed the  cluster multiplication formulas of the Kronecker  quantum  cluster algebra. Later, Bai, Chen, Ding and Xu \cite{BCDX} extended this result to affine type $A_{2}^{(2)}$. Recently, Chen, Ding and Zhang \cite{cdz} constructed the cluster multiplication formulas  for acyclic types through certain quotients of derived Hall algebras. It is worth remarking that although the  structure constants in the cluster multiplication formulas \cite{cdz} have the homological interpretations, it is very difficult to explicitly compute them in general.

One of the most important problems in cluster theory is to  construct natural bases, among which there  is a specific one of particular interest called a positive basis  (see for example \cite{SZ,Q3,li-pan}).
 We say that a basis of a cluster algebra is positive if its structure constants are positive. Note that  one of the advantages of cluster multiplication formulas is that it provides  a possible way to construct explicitly such positive bases of (quantum) cluster algebras \cite{SZ,CI-1,ck,dx, BCDX}.

In this paper, we concentrate on the quantum cluster algebra associated with an affine quiver of type $A_2^{(1)}$. The paper is organized as follows. Section 2 reviews the basics about quantum cluster algebras and describes the quantum cluster algebra of type $A_2^{(1)}$ with principal coefficients. In Section 3, we  establish the cluster multiplication formulas for the  quantum cluster algebra of type $A_2^{(1)}$ with principal coefficients, whose structure constants are computed explicitly. Note that some of these  formulas can be viewed as a quantum analogue of the constant coefficient linear relations, from which we obtain a simple description for every quantum cluster variable as  a polynomial in terms of the quantum cluster variables in  clusters which are one-step mutations from the initial cluster. In Section 4, with the help of the cluster multiplication formulas, we  provide an explicit   bar-invariant positive $\mathbb{ZP}$-basis. The results extend some existing ones of the corresponding  cluster algebra in \cite{CI-1} to the quantum version.
\section{Preliminaries}
\subsection{Quantum cluster algebras}
In this subsection, we collect some notations and concepts of quantum cluster algebras in \cite{berzel}.

Let $\widetilde{B}=(b_{ij})$ be an $m\times n$ integer matrix with $m\geq n$, whose $j$th column is denoted by ${\bf b}_j$ for $1\leq j\leq n$ and principal part $B$ is the upper $n\times n$ submatrix.
The pair $(\Lambda, \widetilde{B})$ is called  compatible if
$$\Lambda ({\bf b}_j,{\bf e}_i)=\delta_{ij}d_j$$
for $1\leq i\leq m$, $1\leq j\leq n$, where the set $\{{\bf e}_1,\cdots,{\bf e}_m\}$  is the standard basis in $\mathbb{Z}^{m}$, the elements in $\{d_1,\cdots,d_n\}$ are positive integers
and $\Lambda: \mathbb{Z}^{m} \times \mathbb{Z}^{m} \rightarrow \mathbb{Z}$ is a skew-symmetric bilinear form on $ \mathbb{Z}^{m}$. With some abuse of notation, we
use the same symbol $\Lambda$ to denote the skew-symmetric matrix whose entries are given by
$$\lambda_{ij}=\Lambda ({\bf e}_i,{\bf e}_j).$$

 Let $q$  be the formal parameter.
 The based quantum torus  $\mathcal{T}=\mathcal{T}(\Lambda)$  is the $\ZZ[q^{\pm\frac{1}{2}}]$-algebra  with a
distinguished $\ZZ[q^{\pm\frac{1}{2}}]$-basis $\{X^{\bf c}| {\bf c}\in \mathbb{Z}^{m}\}$ and the multiplication given by
$$X^{\bf c}X^{\bf d}=q^{\Lambda({\bf c},{\bf d})/2}X^{{\bf c+d}}\,\,\, \,\,\,\, \text{ for all }\bf{c}, \bf{d} \in  \mathbb{Z}^\text{m}.$$

Note that  $\mathcal{T}$ is   contained in $\mathcal{F}$, its skew-field
of fractions. An initial  quantum seed $(\Lambda, \widetilde{B}, X)$ of $\mathcal{F}$
  consists of a compatible pair $(\Lambda,\widetilde{B})$ and an initial cluster  $X=\{X_1,\cdots,X_m\}$ with  $X_i:=X^{{\bf e_i}}$ for $1\leq i\leq m$.

  For any rational number $a$, denote the floor (resp.~ceiling) function   by  $\lfloor a\rfloor$ (resp.~$\lceil a\rceil$), and  the function $[a]_+=max(a,0)$.

  Let $k\in \{1,2,\cdots,n\}$. The quantum seed mutation $\mu_k$  in direction $k$ transforms  $(\Lambda, \widetilde{B}, X)$ into a new quantum seed  $\mu_k (\Lambda, \widetilde{B}, X)=(\Lambda',\widetilde{B}',X')$ defined as follows:
\begin{enumerate}
\item [$\bullet$] $\Lambda'=E^{T}\Lambda E$, where the
$m\times m$ matrix $E=(e_{ij})$ is given by
\[e_{ij}=\begin{cases}
\delta_{ij} & \text{if $j\ne k$;}\\
-1 & \text{if $i=j=k$;}\\
[-b_{ik}]_{+} & \text{if $i\ne j = k$.}
\end{cases}
\]
\item [$\bullet$] $\widetilde{B}'=(b'_{ij})$ is given by
\[b'_{ij}=\begin{cases}
-b_{ij} & \text{if $i=k$ or $j=k$;}\\
b_{ij}+\frac{1}{2}(|b_{ik}|b_{kj} +b_{ik}|b_{kj}|)& \text{otherwise.}
\end{cases}
\]
\item [$\bullet$] $X'=\{X'_1,\cdots,X'_m\}$, where $X_j'=X_j$ for any $j\neq k$, and
$$X_k'=X^{-{\bf e}_k+\sum_{1\leq i\leq m}[b_{ik}]_{+} {\bf e}_i}+X^{ -{\bf e}_k+\sum_{1\leq i\leq m}[-b_{ik}]_{+} {\bf e}_i}.$$
\end{enumerate}

 For any quantum seed $(\Lambda^{\ast}, \widetilde{B}^{\ast}, X^{\ast})$, which is obtained from the initial quantum seed $(\Lambda,\widetilde{B},X)$ by a finite
sequence of mutations, we call the set $\{(X^{\ast})^{{\bf e}_i}~|1\leq i\leq n\}$ the  cluster whose elements are called
the  quantum cluster variables, and the
elements in  $\mathbb{P}:=\{X_i~|~n+1\leq i\leq m\}$  the  coefficients. The quantum cluster monomials are the monomials of the quantum cluster
variables contained in any common quantum seed.  Let
$\ZZ\mathbb{P}$ (resp. $\mathbb{Z}_{\geq 0}\mathbb{P}$) be the ring of  Laurent polynomials in  $\mathbb{P}$ with coefficients in $\ZZ[q^{\pm\frac{1}{2}}]$
(resp.  $\ZZ_{\geq 0}[q^{\pm\frac{1}{2}}]$). The
quantum cluster algebra $\mathcal{A}_{q}(\Lambda,\widetilde{B})$ is  the
$\ZZ\mathbb{P}$-subalgebra of $\mathcal{F}$ generated by all
quantum cluster variables.  In fact, $\mathcal{A}_{q}(\Lambda,\widetilde{B})$ is
contained in the quantum torus  associated with any quantum seed according to the Laurent phenomenon proved in \cite{berzel}.

One can define the $\mathbb{Z}$-linear bar-involution of $\mathcal{T}$:
$$\overline{q^{\frac{l}{2}}X^{\mathbf{c}}}=q^{-\frac{l}{2}}X^{\mathbf{c}},\ \   \text{ for  all}\   l\in \mathbb{Z}  \text { and }\mathbf{c}\in \mathbb{Z}^{m}.$$

It is known that $\overline{fg} = \overline{g}\ \overline{f}$ for any $f, g \in \mathcal{T}$ and that
quantum cluster monomials are bar-invariant.
\subsection{Quantum cluster algebra  of type $A_2^{(1)}$ with principal coefficients}
We consider the compatible pair $(\Lambda,\widetilde{B})$ as follows:

$$\Lambda=\left(\begin{array}{cccccc} 0 & 0& 0& -1& 0& 0\\
0& 0& 0& 0& -1& 0\\
0& 0& 0& 0& 0& -1\\
1& 0& 0& 0& -1& -1\\
0& 1& 0& 1& 0& -1\\
0 & 0& 1& 1& 1& 0\end{array}\right),\
\widetilde{B}=\left(\begin{array}{ccc} 0 & 1& 1\\
-1& 0& 1\\
-1& -1& 0\\
1& 0& 0\\
0& 1& 0\\
0 & 0& 1\end{array}\right).$$

For the skew-symmetric principal submatrix of $\widetilde{B}$, one can associate a  quiver $Q$:
$$
\xymatrix @R=5pt@C=10pt{
         &&2\ar[dr]&\\
&1\ar[ur]\ar[rr]&  &3}
$$
which is exactly an affine quiver of type $A_2^{(1)}$.  The quantum cluster algebra $\mathcal{A}_{q} (Q)$ associated with the initial quantum seed $\Sigma=(\Lambda,\widetilde{B},X)$, where $X=\{X_1, X_2,X_3, y_1,y_2,y_3\}$, is called the  quantum cluster algebra of type $A_2^{(1)}$ with principal coefficients. Note that $y_1,y_2,y_3$ are coefficients. In the rest of the paper, we will work only on the quantum cluster algebra $\mathcal{A}_{q} (Q)$.

We define\\
\[\begin{array}{l}
w={X^{(1,-1,0,0,1,0)}}+{X^{(0,-1,1,0,0,0)}},\\
z={X^{(0,1,-1,1,0,1)}}+{X^{(-1,0,-1,1,0,0)}}+{X^{(-1,1,0,0,0,0)}}.\\
\end{array}\]

The following result is a quantum analogue of \cite[Lemma 2.1.2]{CI-0}.

\begin{lemma}\label{LemmaAlgStructure}
The unlabeled seeds of $\mathcal{A}_{q} (Q)$ with the initial seed $\Sigma=\Sigma_{1}$ are given by
\begin{eqnarray*}
\Sigma_{m} &=&(\Lambda_{m}, \widetilde{B}_{m}, \{X_{m}, X_{m+1}, X_{m+2}, y_1,y_2,y_3\}),\\
\Sigma^{cyc}_{2m-1} &=&( \Lambda_{2m-1}^{cyc}, \widetilde{B}_{2m-1}^{cyc},\{X_{2m-1},w, X_{2m+1}, y_1,y_2,y_3\}),\\
\Sigma^{cyc}_{2m} &=&(\Lambda_{2m}^{cyc},\widetilde{B}_{2m}^{cyc}, \{X_{2m}, z, X_{2m+2}, y_1,y_2,y_3\})
\end{eqnarray*}
for any $m\in\ZZ$, and they are mutually related by
\begin{equation*}\label{DiagramMutations}
\xymatrix@!R=8pt@C=20pt{
*+[F]{\Sigma^{cyc}_{2m-1}}\ar@<1ex>[rr]\ar@<1ex>[d]&   &  *+[F]{\Sigma^{cyc}_{2m+1}}\ar@<1ex>[d]\ar[ll]    &                 &\\
*+[F]{\Sigma_{2m-1}}\ar@<1ex>[r]\ar[u]    &*+[F]{\Sigma_{2m}}\ar@<1ex>[r]\ar@<1ex>[d]\ar[l]         &   *+[F]{\Sigma_{2m+1}}\ar@<1ex>[r]\ar[l]\ar[u]       & *+[F]{\Sigma_{2m+2}}\ar[l]\ar@<1ex>[d]\\
                              & *+[F]{\Sigma^{cyc}_{2m}}\ar@<1ex>[rr]\ar[u]   &                                   & *+[F]{\Sigma^{cyc}_{2m+2}}\ar[u]\ar[ll]
}
\end{equation*}
where  arrows from left to right (resp. from right to left) are mutations in direction~$1$
(resp.~$3$) and vertical arrows (in both directions) are mutations in direction~$2$.
The  matrices  are given by the following for any $n\in \mathbb{Z}_{>0}$:

\[\begin{array}{cc}
\widetilde{B}_{2n-1}=\left(\def\objectstyle{\scriptstyle}\def\labelstyle{\scriptstyle}\vcenter{\xymatrix@R=0pt@C=0pt{0&1&1\\-1&0&1\\-1&-1&0\\n&0&1-n\\n-1&1&1-n\\n-1&0&2-n}}\right),
\Lambda_{2n-1}=\left(\def\objectstyle{\scriptstyle}\def\labelstyle{\scriptstyle}\vcenter{\xymatrix@R=0pt@C=0pt {0 & 0& 0& n-2& 0& 1-n\\
0& 0& 0& n-1& -1& 1-n\\
0& 0& 0& n-1& 0& -n\\
2-n& 1-n& 1-n& 0& -1& -1\\
0& 1& 0& 1& 0& -1\\
n-1 & n-1& n& 1& 1& 0}}\right);
\end{array}\]

\[\begin{array}{cc}
\widetilde{B}_{2n}=\left(\def\objectstyle{\scriptstyle}\def\labelstyle{\scriptstyle}\vcenter{\xymatrix@R=0pt@C=0pt{0&1&1\\-1&0&1\\-1&-1&0\\n&1&-n\\n&0&1-n\\n-1&1&1-n}}\right),
\Lambda_{2n}=\left(\def\objectstyle{\scriptstyle}\def\labelstyle{\scriptstyle}\vcenter{\xymatrix@R=0pt@C=0pt {0 & 0& 0& n-1& -1& 1-n\\
0& 0& 0& n-1& 0& -n\\
0& 0& 0& n& -1& -n\\
1-n& 1-n& -n& 0& -1& -1\\
1& 0& 1& 1& 0& -1\\
n-1& n& n& 1& 1& 0}}\right);
\end{array}
\]

\[
\begin{array}{cc}
\widetilde{B}_{0}=\left(\def\objectstyle{\scriptstyle}\def\labelstyle{\scriptstyle}\vcenter{\xymatrix@R=0pt@C=0pt{0&1&1\\-1&0&1\\-1&-1&0\\0&1&0\\0&0&1\\-1&0&0}}\right),
\Lambda_{0}=\left(\def\objectstyle{\scriptstyle}\def\labelstyle{\scriptstyle}\vcenter{\xymatrix@R=0pt@C=0pt {0 & 0& 0& 0& 0& 1\\
0& 0& 0& -1& 0& 0\\
0& 0& 0& 0& -1& 0\\
0& 1& 0& 0& -1& -1\\
0& 0& 1& 1& 0& -1\\
-1& 0& 0& 1& 1& 0}}\right);
\end{array}
\]

\[
\begin{array}{cc}
\widetilde{B}_{-2n-1}=\left(\def\objectstyle{\scriptstyle}\def\labelstyle{\scriptstyle}\vcenter{\xymatrix@R=0pt@C=0pt{0&1&1\\-1&0&1\\-1&-1&0\\n-1&-1&1-n\\n-1&0&-n\\n&-1&-n}}\right),
\Lambda_{-2n-1}=\left(\def\objectstyle{\scriptstyle}\def\labelstyle{\scriptstyle}\vcenter{\xymatrix@R=0pt@C=0pt {0 & 0& 0& n& 1& -n\\
0& 0& 0& n& 0& 1-n\\
0& 0& 0& n-1& 1& 1-n\\
-n& -n& 1-n& 0& -1& -1\\
-1& 0& -1& 1& 0& -1\\
n & n-1& n-1& 1& 1& 0}}\right);
\end{array}
\]

\[
\begin{array}{cc}
\widetilde{B}_{-2n}=\left(\def\objectstyle{\scriptstyle}\def\labelstyle{\scriptstyle}\vcenter{\xymatrix@R=0pt@C=0pt{0&1&1\\-1&0&1\\-1&-1&0\\n-2&0&1-n\\n-1&-1&1-n\\n-1&0&-n}}\right),
\Lambda_{-2n}=\left(\def\objectstyle{\scriptstyle}\def\labelstyle{\scriptstyle}\vcenter{\xymatrix@R=0pt@C=0pt {0 & 0& 0& n& 0& 1-n\\
0& 0& 0& n-1& 1& 1-n\\
0& 0& 0& n-1& 0& 2-n\\
-n& 1-n& 1-n& 0& -1& -1\\
0& -1& 0& 1& 0& -1\\
n-1 & n-1& n-2& 1& 1& 0}}\right);
\end{array}
\]

\[
\begin{array}{cc}
\widetilde{B}_{-1}=\left(\def\objectstyle{\scriptstyle}\def\labelstyle{\scriptstyle}\vcenter{\xymatrix@R=0pt@C=0pt{0&1&1\\-1&0&1\\-1&-1&0\\0&0&1\\-1&0&0\\0&-1&0}}\right),
\Lambda_{-1}=\left(\def\objectstyle{\scriptstyle}\def\labelstyle{\scriptstyle}\vcenter{\xymatrix@R=0pt@C=0pt {0 & 0& 0& 0& 1& 0\\
0& 0& 0& 0& 0& 1\\
0& 0& 0& -1& 0& 0\\
0& 0& 1& 0& -1& -1\\
-1& 0& 0& 1& 0& -1\\
0 & -1& 0& 1& 1& 0}}\right);
\end{array}
\]

\[
\begin{array}{cc} \widetilde{B}^{cyc}_{-2n-1}=\left(\def\objectstyle{\scriptstyle}\def\labelstyle{\scriptstyle}\vcenter{\xymatrix@R=0pt@C=0pt{0&-1&2\\1&0&-1\\-2&1&0\\n-2&1&1-n\\n-1&0&-n\\n-1&1&-n}}\right),
\Lambda^{cyc}_{-2n-1}=\left(\def\objectstyle{\scriptstyle}\def\labelstyle{\scriptstyle}\vcenter{\xymatrix@R=0pt@C=0pt {0 & 0& 0& n& 1& -n\\
0& 0& 0& 0& 1& -1\\
0& 0& 0& n-1& 1& 1-n\\
-n& 0& 1-n& 0& -1& -1\\
-1& -1& -1& 1& 0& -1\\
n& 1& n-1& 1& 1& 0}}\right);
\end{array}
\]

\[
\begin{array}{cccccc} \widetilde{B}^{cyc}_{2n-1}=\left(\def\objectstyle{\scriptstyle}\def\labelstyle{\scriptstyle}\vcenter{\xymatrix@R=0pt@C=0pt{0&-1&2\\1&0&-1\\-2&1&0\\n&0&1-n\\n-1&-1&2-n\\n-1&0&2-n}}\right),
\Lambda^{cyc}_{2n-1}=\left(\def\objectstyle{\scriptstyle}\def\labelstyle{\scriptstyle}\vcenter{\xymatrix@R=0pt@C=0pt {0 & 0& 0& n-2& 0& 1-n\\
0& 0& 0& 0& 1& -1\\
0& 0& 0& n-1& 0& -n\\
2-n& 0& 1-n& 0& -1& -1\\
0& -1& 0& 1& 0& -1\\
n-1& 1& n& 1& 1& 0}}\right);
\end{array}
\]

\[
\begin{array}{cc}
\widetilde{B}^{cyc}_{-1}=\left(\def\objectstyle{\scriptstyle}\def\labelstyle{\scriptstyle}\vcenter{\xymatrix@R=0pt@C=0pt{0&-1&2\\1&0&-1\\-2&1&0\\0&0&1\\-1&0&0\\-1&1&0}}\right),
\Lambda^{cyc}_{-2n-1}=\left(\def\objectstyle{\scriptstyle}\def\labelstyle{\scriptstyle}\vcenter{\xymatrix@R=0pt@C=0pt {0 & 0& 0& 0& 1& 0\\
0& 0& 0& 0& 1& -1\\
0& 0& 0& -1&0& 0\\
0& 0& 1& 0& -1& -1\\
-1& -1& 0& 1& 0& -1\\
0& 1& 0& 1& 1& 0}}\right);
\end{array}
\]

\[
\begin{array}{cc}
\widetilde{B}^{cyc}_{-2n}=\left(\def\objectstyle{\scriptstyle}\def\labelstyle{\scriptstyle}\vcenter{\xymatrix@R=0pt@C=0pt{0&-1&2\\1&0&-1\\-2&1&0\\n-2&0&1-n\\n-2&1&1-n\\n-1&0&-n}}\right),
\Lambda^{cyc}_{-2n}=\left(\def\objectstyle{\scriptstyle}\def\labelstyle{\scriptstyle}\vcenter{\xymatrix@R=0pt@C=0pt {0 & 0& 0& n& 0& 1-n\\
0& 0& 0& 1& -1& 0\\
0& 0& 0& n-1& 0& 2-n\\
-n& -1& 1-n& 0& -1& -1\\
0& 1& 0& 1& 0& -1\\
n-1& 0& n-2& 1& 1& 0}}\right);
\end{array}
\]

\[
\begin{array}{cc}
\widetilde{B}^{cyc}_{2n}=\left(\def\objectstyle{\scriptstyle}\def\labelstyle{\scriptstyle}\vcenter{\xymatrix@R=0pt@C=0pt{0&-1&2\\1&0&-1\\-2&1&0\\n&-1&1-n\\n&0&1-n\\n-1&-1&2-n}}\right),
\Lambda^{cyc}_{2n}=\left(\def\objectstyle{\scriptstyle}\def\labelstyle{\scriptstyle}\vcenter{\xymatrix@R=0pt@C=0pt {0 & 0& 0& n-1& -1& 1-n\\
0& 0& 0& 1& -1& 0\\
0& 0& 0& n& -1& -n\\
1-n& -1& -n& 0& -1& -1\\
1& 1& 1& 1& 0& -1\\
n-1& 0& n& 1& 1& 0}}\right);
\end{array}
\]

\[
\begin{array}{cc}
\widetilde{B}^{cyc}_{0}=\left(\def\objectstyle{\scriptstyle}\def\labelstyle{\scriptstyle}\vcenter{\xymatrix@R=0pt@C=0pt{0&-1&2\\1&0&-1\\-2&1&0\\0&-1&1\\0&0&1\\-1&0&0}}\right),
\Lambda^{cyc}_{0}=\left(\def\objectstyle{\scriptstyle}\def\labelstyle{\scriptstyle}\vcenter{\xymatrix@R=0pt@C=0pt {0 & 0& 0& 0& 0& 1\\
0& 0& 0& 1& -1& 0\\
0& 0& 0& 0& -1& 0\\
0& -1& 0& 0& -1& -1\\
0& 1& 1& 1& 0& -1\\
-1& 0& 0& 1& 1& 0}}\right).
\end{array}
\]
\end{lemma}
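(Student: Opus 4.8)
The plan is to argue by induction on the number of mutation steps away from the initial seed, exploiting the fact that the mutation of the exchange matrix $\widetilde{B}$ does not involve $\Lambda$. Consequently the underlying combinatorial data — which seeds occur, and how they are arranged in the displayed diagram — coincide with the classical ones recorded in \cite[Lemma 2.1.2]{CI-0}; in particular the exchange matrices $\widetilde{B}_{m}$, $\widetilde{B}^{cyc}_{2m-1}$, $\widetilde{B}^{cyc}_{2m}$, the recursive naming of the cluster variables $X_{m}$, and the completeness of the list of seeds are all inherited from that result. The only genuinely new content is the family of quantization matrices $\Lambda_{m}$, $\Lambda^{cyc}_{2m-1}$, $\Lambda^{cyc}_{2m}$, and since each such $\Lambda$ is uniquely determined by the initial datum together with the chosen mutation sequence through the rule $\Lambda'=E^{T}\Lambda E$, it suffices to exhibit the closed forms above and to check that they are reproduced by that rule.

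First I would verify the base case: the matrices $\widetilde{B}_{2n-1}$ and $\Lambda_{2n-1}$ specialize at $n=1$ precisely to the initial pair $(\Lambda,\widetilde{B})$ defining $\Sigma=\Sigma_{1}$, which is immediate by inspection. I would also record, as a running invariant, the compatibility identity $\widetilde{B}^{T}\Lambda=(D\mid 0)$, where $D=\mathrm{diag}(d_{1},d_{2},d_{3})$ is here the $3\times 3$ identity; this must persist along the entire orbit and gives a cheap consistency check on every computed $\Lambda$.

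For the inductive step I would treat each arrow type of the diagram in turn. Given a seed in one of the families, I form the $6\times 6$ matrix $E$ from the relevant column of its $\widetilde{B}$ in the direction $k\in\{1,2,3\}$ of the arrow, compute $\Lambda'=E^{T}\Lambda E$ together with the mutated $\widetilde{B}'$, and match the outcome — after the relabeling permutation that reorders the cluster variables back into the standard window — against the claimed matrices of the adjacent seed. The decisive simplification is that every entry of every listed matrix is affine-linear in the integer parameter $n$, so a single symbolic computation at generic $n$ settles infinitely many instances at once; only the boundary seeds $\Sigma_{0}$, $\Sigma_{-1}$, $\Sigma^{cyc}_{0}$, $\Sigma^{cyc}_{-1}$ need separate treatment. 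In parallel I would confirm the cluster-variable mutations dictated by the columns of $\widetilde{B}$; for example $\mu_{2}$ applied to $\Sigma_{1}$ sends $X_{2}$ to $X^{(1,-1,0,0,1,0)}+X^{(0,-1,1,0,0,0)}=w$, and the analogous direction-$2$ mutation produces $z$.

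I expect the main obstacle to be organizational rather than conceptual: there are on the order of a dozen matrix families and the diagram carries many arrows, so one must set up the relabeling permutations carefully and verify that the mutation maps glue consistently. Concretely, the vertical direction-$2$ mutations are genuine involutions with no relabeling (only the middle variable changes), whereas the horizontal direction-$1$ and direction-$3$ mutations are mutually inverse only after incorporating the cyclic relabeling of the three-term cluster window; and one must check that traversing any closed loop of the diagram returns the same seed. Maintaining the compatibility relation and the classical $\widetilde{B}$-pattern as invariants throughout is exactly what reduces this bookkeeping to a finite, mechanical verification.
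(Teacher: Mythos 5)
Your proposal is correct and follows essentially the same route as the paper: the paper's proof is a one-line citation of \cite[Lemma 2.1.2]{CI-0} for the classical seed pattern and of \cite[Proposition 2.12]{CL} for the propagation of the quantization matrices, which is exactly the reduction you carry out explicitly (classical $\widetilde{B}$-data inherited from the commutative case, plus verification of the $\Lambda$'s via $\Lambda'=E^{T}\Lambda E$ with the compatibility relation as an invariant). Your version merely spells out the finite symbolic check that the paper delegates to its references.
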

\begin{proof}
The proof follows from \cite[Lemma 2.1.2]{CI-0} and \cite[Proposition 2.12]{CL}.
\end{proof}
By Lemma \ref{LemmaAlgStructure}, one can get the following exchange relations.

\begin{lemma}\label{1}
For any  $n\in \mathbb{Z}_{>0}$, we have that
\[\begin{array}{l}
{X_{2n}}{X_{2n + 3}} = {q^{\frac{{3{n^2} - 2n - 1}}{2}}}y_1^ny_2^ny_3^{n - 1} + {X_{2n + 1}}{X_{2n + 2}};\\
{X_{2n - 1}}{X_{2n + 2}} = {q^{\frac{{3{n^2} - 4n}}{2}}}y_1^ny_2^{n - 1}y_3^{n - 1} + {X_{2n}}{X_{2n + 1}};\\
{X_{ - (2n + 1)}}{X_{ - (2n - 2)}} = {q^{\frac{{3{n^2} - 4n}}{2}}}y_1^{n - 1}y_2^{n - 1}y_3^n + {X_{ - (2n - 1)}}{X_{ - 2n}};\\
{X_{ - (2n + 2)}}{X_{ - (2n - 1)}} = {q^{\frac{{3{n^2} - 2n - 1}}{2}}}y_1^{n - 1}y_2^ny_3^n + {X_{ - 2n}}{X_{ - (2n + 1)}};\\
{X_0}{X_3} = {q^{\frac{1}{2}}}{X_1}{X_2}{y_3} + 1;\\
{X_{ - 1}}{X_2} = {q^{\frac{1}{2}}}{X_0}{X_1}{y_2} + 1;\\
{X_{ - 2}}{X_1} = {q^{\frac{1}{2}}}{X_{ - 1}}{X_0}{y_1} + 1;\\
w{X_{2n}} = {q^{\frac{1}{2}}}{X_{2n - 1}}{y_2} + {X_{2n + 1}};\\
w{X_{ - 2n}} = {X_{ - (2n - 1)}}{y_1}{y_3} + {X_{ - (2n + 1)}};\\
z{X_{2n + 1}} = q{X_{2n}}{y_1}{y_3} + {X_{2n + 2}};\\
z{X_{ - (2n - 1)}} = {q^{ - \frac{1}{2}}}{X_{ - (2n - 2)}}{y_2} + {X_{ - 2n}};\\
w{X_0} = {q^{ - \frac{1}{2}}}{X_1}{y_3} + {X_{ - 1}};\\
z{X_1} = {q^{\frac{1}{2}}}{X_0}{y_1} + {X_2};\\
{X_{2n - 1}}{X_{2n + 3}} = {q^{\frac{{3{n^2} - 4n}}{2}}}wy_1^ny_2^{n - 1}y_3^{n - 1} + X_{2n + 1}^2;\\
{X_{ - (2n + 3)}}{X_{ - (2n - 1)}} = {q^{\frac{{3{n^2} - 2n - 1}}{2}}}wy_1^{n - 1}y_2^ny_3^n + X_{ - (2n + 1)}^2;\\
{X_{2n}}{X_{2n + 4}} = {q^{\frac{{3{n^2} - 2n - 1}}{2}}}zy_1^ny_2^ny_3^{n - 1} + X_{2n + 2}^2;\\
{X_{ - (2n + 2)}}{X_{ - (2n - 2)}} = {q^{\frac{{3{n^2} - 4n}}{2}}}zy_1^{n - 1}y_2^{n - 1}y_3^n + X_{ - 2n}^2;\\
{X_{ - 1}}{X_3} = qX_1^2{y_2}{y_3} + w;\\
{X_{ - 3}}{X_1} = {q^{\frac{1}{2}}}X_{ - 1}^2{y_1} + w;\\
{X_0}{X_4} = {q^{\frac{1}{2}}}X_2^2{y_3} + z;\\
{X_{ - 2}}{X_2} = qX_0^2{y_1}{y_2} + z.
\end{array}\]
\end{lemma}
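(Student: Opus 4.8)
The plan is to obtain each identity as the exchange relation produced by a single quantum seed mutation, applied to the appropriate seed furnished by Lemma~\ref{LemmaAlgStructure}. Recall that mutating a quantum seed $(\Lambda,\widetilde{B},X)$ in direction $k$ replaces $X_k$ by
$$X_k'=X^{-{\bf e}_k+\sum_i[b_{ik}]_+{\bf e}_i}+X^{-{\bf e}_k+\sum_i[-b_{ik}]_+{\bf e}_i},$$
so that, using the torus relation $X^{\bf c}X^{\bf d}=q^{\Lambda({\bf c},{\bf d})/2}X^{{\bf c}+{\bf d}}$, the product of the old and new variables becomes
$$X_kX_k'=q^{\Lambda({\bf e}_k,{\bf c}_1)/2}X^{{\bf c}_1}+q^{\Lambda({\bf e}_k,{\bf c}_2)/2}X^{{\bf c}_2},$$
where ${\bf c}_1=\sum_i[b_{ik}]_+{\bf e}_i$ and ${\bf c}_2=\sum_i[-b_{ik}]_+{\bf e}_i$. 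Thus every relation in the list is, after identifying the two resulting monomials, precisely this formula evaluated in one concrete seed.

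First I would match each identity to its source. Reading the mutation diagram in Lemma~\ref{LemmaAlgStructure}, mutation in direction~$1$ (resp.~$3$) of the seed $\Sigma_m$ sends $X_m\mapsto X_{m+3}$ and yields the relations of the form $X_mX_{m+3}=\ast+X_{m+1}X_{m+2}$ (the first four identities, together with the base cases $X_0X_3$, $X_{-1}X_2$, $X_{-2}X_1$). Mutation in direction~$2$ of $\Sigma_m$ introduces $w$ or $z$ and produces the identities $wX_{2n}=\ast$, $zX_{2n+1}=\ast$, their negative-index analogues, and the base cases $wX_0$, $zX_1$. Finally, mutation in direction~$1$ or~$3$ of the cyclic seeds $\Sigma^{cyc}_{2m-1}$, $\Sigma^{cyc}_{2m}$ sends $X_m\mapsto X_{m+4}$ and produces the remaining identities, whose right-hand sides carry a factor $w$ or $z$ together with a square $X_{m+2}^2$ (the last eight identities).

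The computation then reduces to reading off the relevant column of the exchange matrix to obtain ${\bf c}_1,{\bf c}_2$ and evaluating two kinds of $q$-powers: the mutation factor $q^{\Lambda({\bf e}_k,{\bf c}_i)/2}$, and the reordering factor incurred when rewriting each $X^{{\bf c}_i}$ as an ordered monomial in the cluster variables $X^{{\bf e}_1},X^{{\bf e}_2},X^{{\bf e}_3}$ and the coefficients $y_j=X^{{\bf e}_{j+3}}$. For instance, for $X_{2n}X_{2n+3}$ one uses $\Sigma_{2n}$: column~$1$ of $\widetilde{B}_{2n}$ is $(0,-1,-1,n,n,n-1)^{T}$, so ${\bf c}_1=n{\bf e}_4+n{\bf e}_5+(n-1){\bf e}_6$ and ${\bf c}_2={\bf e}_2+{\bf e}_3$; row~$1$ of $\Lambda_{2n}$ gives the mutation factor $q^{\Lambda_{2n}({\bf e}_1,{\bf c}_1)/2}=q^{-1/2}$, while the lower-right $3\times3$ block of $\Lambda_{2n}$ yields $X^{{\bf c}_1}=q^{3n^2/2-n}y_1^ny_2^ny_3^{n-1}$; these combine to the stated coefficient $q^{(3n^2-2n-1)/2}$, and since $\Lambda_{2n}({\bf e}_2,{\bf e}_3)=0$ the second monomial is exactly $X_{2n+1}X_{2n+2}$.

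The main obstacle is purely the bookkeeping of these $q$-exponents across all the identities: the quadratic-in-$n$ powers such as $(3n^2-2n-1)/2$ appear only after combining the mutation factor with the reordering factor coming from the non-commuting coefficients $y_1,y_2,y_3$, so every sign in the $4,5,6$ rows and columns of each $\Lambda$-matrix must be tracked correctly. The families indexed by $n$ all follow the same template, so it suffices to verify the exponent as a polynomial in $n$ once per family; the finitely many base cases ($\Sigma_0$, $\Sigma_{\pm1}$, $\Sigma_{-2}$ and the corresponding cyclic seeds), where some monomial degenerates to $1$, are checked individually and are immediate.
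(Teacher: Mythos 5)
Your proposal is correct and follows essentially the same route as the paper, which derives all of these identities directly as one-step exchange relations from the seeds $(\Lambda_m,\widetilde B_m)$, $(\Lambda^{cyc}_m,\widetilde B^{cyc}_m)$ listed in Lemma~\ref{LemmaAlgStructure}; your worked example for $X_{2n}X_{2n+3}$ checks out, including the split of the $q$-exponent into the mutation factor $q^{-1/2}$ and the reordering factor $q^{3n^2/2-n}$. The only detail worth flagging is that for the identities written in the order $wX_{2n}$, $zX_{2n+1}$, etc., the product is $X_k'X_k$ rather than $X_kX_k'$, so the mutation factor appears with the opposite sign $q^{-\Lambda(\mathbf{e}_k,\mathbf{c}_i)/2}$ — a trivial adjustment already covered by your bookkeeping remark.
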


We  define $u=wz-q^{-\frac{1}{2}}{X^{(0,0,0,1,0,1)}}-q^{\frac{1}{2}}{X^{(0,0,0,0,1,0)}}$, and $u_n\ (n\in\ZZ)$  by\\
$$u_{-\infty}=0,\ u_0=1,\ u_1=u,\ u_2=u_{1}^{2}-2{X^{(0,0,0,1,1,1)}},\ u_{n+1}=u_{1}u_{n}-{X^{(0,0,0,1,1,1)}}u_{n-1}(n\geq2).$$

It follows that $u_n\in \mathcal{A}_{q} (Q)$ for any $n\in\ZZ$.

\section{The cluster multiplication formulas}
In this section, we will establish the cluster multiplication formulas for $\mathcal{A}_{q} (Q)$. We start with some technical lemmas as follows.
\begin{lemma}\label{2}
For any  $n,p\in \mathbb{Z}_{>0}$, we have that
$$(1)\ {u_1}{X^{(0,0,0,1,1,1)}} = {X^{(0,0,0,1,1,1)}}{u_1};\ (2)\ {u_p}{u_n} = {u_n}{u_p};\  (3)\ {u_n}{X^{(0,0,0,1,1,1)}} = {X^{(0,0,0,1,1,1)}}{u_n}.$$
\end{lemma}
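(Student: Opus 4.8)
The plan is to reduce the whole lemma to the elementary fact that two basis monomials of the quantum torus $\mathcal{T}(\Lambda)$ commute exactly when the form $\Lambda$ vanishes on their exponents: since $X^{\mathbf{c}}X^{\mathbf{d}} = q^{\Lambda(\mathbf{c},\mathbf{d})/2}X^{\mathbf{c}+\mathbf{d}}$ and $\Lambda$ is skew-symmetric, one has $X^{\mathbf{c}}X^{\mathbf{d}} = X^{\mathbf{d}}X^{\mathbf{c}}$ if and only if $\Lambda(\mathbf{c},\mathbf{d}) = 0$. Write $P = X^{(0,0,0,1,1,1)}$ and $\mathbf{p} = (0,0,0,1,1,1)$. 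First I would record the single linear functional $\Lambda(\mathbf{p},-)$ once and for all: summing rows $4,5,6$ of the matrix $\Lambda$ gives $\mathbf{p}^{T}\Lambda = (1,1,1,2,0,-2)$, so that $\Lambda(\mathbf{p},\mathbf{v}) = v_1+v_2+v_3+2v_4-2v_6$ for every $\mathbf{v}\in\ZZ^{6}$. Everything below then amounts to evaluating this one functional.

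For part $(1)$, since $u_1 = u = wz - q^{-\frac{1}{2}}X^{(0,0,0,1,0,1)} - q^{\frac{1}{2}}X^{(0,0,0,0,1,0)}$, it suffices to show that $P$ commutes with each of $w$, $z$, $X^{(0,0,0,1,0,1)}$ and $X^{(0,0,0,0,1,0)}$, because commuting with $w$ and with $z$ forces $P(wz) = (Pw)z = w(Pz) = (wz)P$. Using $w = X^{(1,-1,0,0,1,0)} + X^{(0,-1,1,0,0,0)}$ and $z = X^{(0,1,-1,1,0,1)} + X^{(-1,0,-1,1,0,0)} + X^{(-1,1,0,0,0,0)}$, I would simply evaluate $\Lambda(\mathbf{p},-)$ on the seven exponent vectors $(1,-1,0,0,1,0)$, $(0,-1,1,0,0,0)$, $(0,1,-1,1,0,1)$, $(-1,0,-1,1,0,0)$, $(-1,1,0,0,0,0)$, $(0,0,0,1,0,1)$, $(0,0,0,0,1,0)$ and check that each value is $0$ (for instance $\Lambda(\mathbf{p},(0,1,-1,1,0,1)) = 0+1-1+2-2 = 0$). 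This finite table is the entire computational content of the lemma; once it is in place, $P$ commutes term by term with $w$, $z$ and the two remaining monomials, hence $u_1 P = P u_1$.

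Part $(3)$ then follows from $(1)$ by induction on $n$. The case $n=1$ is $(1)$, and for $n=2$ the identity $u_2 = u_1^{2} - 2P$ shows $u_2$ commutes with $P$ because $u_1$ and $P$ do. For the inductive step I use $u_{n+1} = u_1 u_n - P u_{n-1}$: assuming $u_{n-1}P = Pu_{n-1}$ and $u_n P = P u_n$, I compute $u_{n+1}P = u_1 u_n P - P u_{n-1}P = u_1 P u_n - P^{2} u_{n-1} = P u_1 u_n - P^{2} u_{n-1} = P(u_1 u_n - P u_{n-1}) = P u_{n+1}$, where the middle equalities use the two inductive hypotheses together with $(1)$. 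This closes the induction and yields $(3)$ for all $n\in\ZZ_{>0}$.

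Finally, part $(2)$ is cleanest handled structurally rather than by a second induction. The recursion exhibits every $u_n$ (for $n\ge 0$) as a polynomial in $u_1$ and $P$ with coefficients in $\ZZ[q^{\pm\frac{1}{2}}]$, as follows at once from $u_0 = 1$, $u_2 = u_1^{2} - 2P$ and $u_{n+1} = u_1 u_n - Pu_{n-1}$ by induction. By $(1)$ the elements $u_1$ and $P$ commute, and both commute with the central scalars $q^{\pm\frac{1}{2}}$, so the $\ZZ[q^{\pm\frac{1}{2}}]$-subalgebra $R$ of $\mathcal{A}_{q}(Q)$ generated by $u_1$ and $P$ is commutative; since $u_p,u_n\in R$ for all $p,n$, they commute, which is $(2)$. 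I expect the only genuinely computational step to be the seven-entry table in part $(1)$, and hence the sole (mild) obstacle, everything else being formal manipulation with the recursion.
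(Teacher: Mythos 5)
Your proof is correct and follows essentially the same route as the paper: part (1) is a direct computation with the quantum torus relations (the paper first expands $u_1$ into the four monomials $X^{(1,0,-1,1,1,1)}+X^{(0,-1,-1,1,1,0)}+X^{(-1,-1,0,1,0,0)}+X^{(-1,0,1,0,0,0)}$ and checks those, whereas you check $\Lambda(\mathbf{p},-)=0$ term-by-term on $w$, $z$ and the two correction monomials, which is an equivalent and equally valid calculation), and parts (2)--(3) follow from (1) by induction on the recursion, exactly as in the paper (you merely swap their order and phrase (2) as commutativity of the subalgebra generated by $u_1$ and $X^{(0,0,0,1,1,1)}$). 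Your evaluation $\mathbf{p}^{T}\Lambda=(1,1,1,2,0,-2)$ and the seven vanishing checks are all correct.
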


\begin{proof}
(1)\ Note that $u_1={X^{(1,0,-1,1,1,1)}} + {X^{(0,-1,-1,1,1,0)}} + {X^{(-1,-1,0,1,0,0)}} + {X^{(-1,0,1,0,0,0)}}$. Thus, the proof follows from a direct calculation.

(2)\ The proof follows from (1) and induction.

(3)\ According to (1), (2) and by induction,  the statement follows immediately.
\end{proof}

By Lemma \ref{2}, we can get the following Lemma \ref{3} and Lemma \ref{0}  easily.

\begin{lemma}\label{3}
For any  $n,p\in \mathbb{Z}_{>0}$, we have that
\[ {u_n}{u_p}=\begin{cases}
{u_{n + p}} + {X^{p(0,0,0,1,1,1)}}{u_{n - p}} & \text{ if }    n > p \ge 1; \nonumber\\
{u_{2n}} + 2{X^{n(0,0,0,1,1,1)}} & \text{ if }    n = p.\nonumber
\end{cases}
\]
\end{lemma}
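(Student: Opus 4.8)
The plan is to exploit the recursive definition of the $u_n$ together with the commutativity results of Lemma \ref{2}, proving the claimed product formula by induction on $p$ (for fixed $n$, or simultaneously). Throughout I write $T := X^{(0,0,0,1,1,1)}$ for brevity, so that the defining recursion reads $u_{k+1} = u_1 u_k - T\, u_{k-1}$ for $k \ge 2$, with $u_0 = 1$, $u_1 = u$, $u_2 = u_1^2 - 2T$; by Lemma \ref{2}(3) the element $T$ is central with respect to every $u_k$, a fact I will use freely. The formula to be established is the Chebyshev-type multiplication rule $u_n u_p = u_{n+p} + T^{\,p} u_{n-p}$ for $n > p \ge 1$, and $u_n u_n = u_{2n} + 2T^{\,n}$ for $n = p$.

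First I would dispose of the base cases. For $p = 1$ the assertion $u_n u_1 = u_{n+1} + T\, u_{n-1}$ (valid for $n > 1$) is exactly the defining recursion, and the boundary instance $u_1 u_1 = u_2 + 2T$ is precisely the definition of $u_2$; so both the $n > p$ and $n = p$ statements hold when $p = 1$. These observations seed the induction.

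Next I would carry out the inductive step on $p$. Assume the formula holds for all smaller values of the second index. To compute $u_n u_{p}$ with $n > p \ge 2$, I substitute the recursion $u_p = u_1 u_{p-1} - T\, u_{p-2}$ and use centrality of $T$ and the commutativity $u_n u_1 = u_1 u_n$ from Lemma \ref{2}(2) to write
\[
u_n u_p = u_1 (u_n u_{p-1}) - T\,(u_n u_{p-2}).
\]
Applying the inductive hypothesis to each parenthesized product (legitimate since $n > p-1$ and $n > p-2$) gives
\[
u_n u_p = u_1\bigl(u_{n+p-1} + T^{\,p-1} u_{n-p+1}\bigr) - T\bigl(u_{n+p-2} + T^{\,p-2} u_{n-p+2}\bigr).
\]
Now I expand using the recursion in the reverse direction: $u_1 u_{n+p-1} = u_{n+p} + T\, u_{n+p-2}$ and $u_1 u_{n-p+1} = u_{n-p+2} + T\, u_{n-p}$ (again moving $T$ to the front by centrality). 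The term $T\, u_{n+p-2}$ cancels against $-T\,u_{n+p-2}$, and the term $T^{\,p}u_{n-p+2}$ cancels against $-T^{\,p}u_{n-p+2}$, leaving exactly $u_{n+p} + T^{\,p} u_{n-p}$, as desired. The diagonal case $n = p \ge 2$ is handled the same way: writing $u_p u_p = u_1(u_p u_{p-1}) - T(u_p u_{p-2})$ and feeding in the hypothesis (the product $u_p u_{p-1}$ falls under the strict-inequality case, and $u_p u_{p-2}$ likewise) produces $u_{2p} + 2T^{\,p}$ after the analogous cancellation.

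The main subtlety — rather than a genuine obstacle — is purely bookkeeping at the low end of the index range: when $p = 2$ the intermediate products involve $u_{n-p+2} = u_n$ and $u_{n-p} = u_{n-2}$, and one must check that the edge conventions $u_0 = 1$ and the reverse-recursion identity $u_1 u_n = u_{n+1} + T\,u_{n-1}$ remain valid down to $n - p = 0$. This is where the special values $u_0 = 1$ and $u_2 = u_1^2 - 2T$ must be invoked explicitly to confirm the telescoping cancellation closes up; all the commuting of $T$ past the $u_k$ that the computation requires is guaranteed in advance by Lemma \ref{2}, so no new positivity or representation-theoretic input is needed here.
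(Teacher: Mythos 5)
Your proof is correct and is exactly the argument the paper leaves implicit: the paper offers no proof of Lemma \ref{3} beyond the remark that it follows easily from Lemma \ref{2}, and your induction on the second index via the defining recursion, using the centrality of $X^{(0,0,0,1,1,1)}$ and the commutativity $u_nu_1=u_1u_n$, is clearly the intended route. The only point to tighten is the one you already flag: for $p=2$ the substitution $u_p=u_1u_{p-1}-X^{(0,0,0,1,1,1)}u_{p-2}$ is off by a factor of $2$ in the last term (since $u_2=u_1^2-2X^{(0,0,0,1,1,1)}$, not $u_1^2-X^{(0,0,0,1,1,1)}$), so $p=2$ must be handled as a second base case by a direct computation, after which your generic inductive step runs verbatim for $p\ge 3$.
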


\begin{lemma}\label{0}
For any  $n\in \mathbb{Z}_{\geq 0}$, we have that
$$(1)\ \overline{u_n} = {u_n};\ \ (2)\ \overline{{X^{(0,0,0,1,1,1)}}{u_n}} = {X^{(0,0,0,1,1,1)}}{u_n}.$$
\end{lemma}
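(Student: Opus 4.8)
The plan is to establish both bar-invariance statements by induction on $n$, using the recursive definition of the $u_n$ together with the bar-involution identity $\overline{fg}=\overline{g}\,\overline{f}$ and the key commutation relations from Lemma \ref{2}. First I would record the base cases. For $n=0$ we have $u_0=1$, which is manifestly bar-invariant, and $\overline{X^{(0,0,0,1,1,1)}u_0}=\overline{X^{(0,0,0,1,1,1)}}=X^{(0,0,0,1,1,1)}$ since the monomial $X^{(0,0,0,1,1,1)}$ carries no $q^{1/2}$ prefactor. For $n=1$, I would use the explicit expansion $u_1=X^{(1,0,-1,1,1,1)}+X^{(0,-1,-1,1,1,0)}+X^{(-1,-1,0,1,0,0)}+X^{(-1,0,1,0,0,0)}$ recorded in the proof of Lemma \ref{2}: each of the four monomials has trivial $q$-coefficient, so applying the bar-involution termwise gives $\overline{u_1}=u_1$.

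Next I would handle the inductive step for part (1). Assuming $\overline{u_{n-1}}=u_{n-1}$ and $\overline{u_n}=u_n$, I apply the bar-involution to the recursion $u_{n+1}=u_1u_n-X^{(0,0,0,1,1,1)}u_{n-1}$. Since $\overline{\,\cdot\,}$ is additive and reverses products, I get $\overline{u_{n+1}}=\overline{u_n}\,\overline{u_1}-\overline{u_{n-1}}\,\overline{X^{(0,0,0,1,1,1)}}=u_nu_1-u_{n-1}X^{(0,0,0,1,1,1)}$. To conclude this equals $u_{n+1}$, I invoke the commutativity relations of Lemma \ref{2}: part (1) gives $u_1X^{(0,0,0,1,1,1)}=X^{(0,0,0,1,1,1)}u_1$, part (2) gives $u_nu_1=u_1u_n$, and part (3) gives $u_{n-1}X^{(0,0,0,1,1,1)}=X^{(0,0,0,1,1,1)}u_{n-1}$. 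Hence $\overline{u_{n+1}}=u_1u_n-X^{(0,0,0,1,1,1)}u_{n-1}=u_{n+1}$. I should also separately verify $n=2$ from $u_2=u_1^2-2X^{(0,0,0,1,1,1)}$, which is immediate since $\overline{u_1^2}=\overline{u_1}\,\overline{u_1}=u_1^2$ and $2X^{(0,0,0,1,1,1)}$ is bar-invariant.

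Finally, part (2) follows from part (1) almost for free. Once $\overline{u_n}=u_n$ is known, I compute $\overline{X^{(0,0,0,1,1,1)}u_n}=\overline{u_n}\,\overline{X^{(0,0,0,1,1,1)}}=u_nX^{(0,0,0,1,1,1)}$, and then Lemma \ref{2}(3) rewrites this as $X^{(0,0,0,1,1,1)}u_n$, completing the argument. The only point requiring any care — and the closest thing to an obstacle — is ensuring that the bar-involution applied to the product $X^{(0,0,0,1,1,1)}u_n$ genuinely reverses the order and that the resulting reversed product is brought back into the original order using precisely the commutation relation Lemma \ref{2}(3); this is exactly why Lemma \ref{2} is proved first and why the paper remarks that Lemma \ref{0} follows ``easily'' from it. Everything else is routine: the base cases are verified from explicit monomial expansions with trivial $q$-prefactors, and the inductive machinery is a direct application of the antihomomorphism property of the bar-involution.
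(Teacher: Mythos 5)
Your proof is correct and follows exactly the route the paper intends: the paper gives no explicit argument for Lemma \ref{0} beyond the remark that it follows easily from Lemma \ref{2}, and your induction via the recursion $u_{n+1}=u_1u_n-X^{(0,0,0,1,1,1)}u_{n-1}$, the antihomomorphism property $\overline{fg}=\overline{g}\,\overline{f}$, and the commutation relations of Lemma \ref{2} is precisely that intended argument, with the base cases correctly checked from the monomial expansion of $u_1$. No gaps.
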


\begin{lemma}\label{regular}
For any  $n\in \mathbb{Z}_{>0}$, we have that
$(1)\ {u_n}w = w{u_n};\ (2)\ {u_n}z = z{u_n}.$
\end{lemma}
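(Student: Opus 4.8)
The plan is to show that $u_n$ commutes with each of $w$ and $z$ by reducing everything to commutation with the single monomial $X^{(0,0,0,1,1,1)}$, whose commutativity with $u_n$ is already established in Lemma~\ref{2}(3). The key observation is that $u_1 = wz - q^{-\frac{1}{2}}X^{(0,0,0,1,0,1)} - q^{\frac{1}{2}}X^{(0,0,0,0,1,0)}$ and that $X^{(0,0,0,1,0,1)}$, $X^{(0,0,0,0,1,0)}$ are pure coefficient monomials (supported on the last three coordinates). Since $y_1,y_2,y_3$ are frozen variables, the products $X^{(0,0,0,1,0,1)}w$, $X^{(0,0,0,0,1,0)}w$ and their reverses differ only by an explicit power of $q$ coming from the bilinear form $\Lambda$; the same holds for $z$.

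First I would verify the base case $n=1$: namely that $u_1 w = w u_1$ and $u_1 z = z u_1$. I would do this by expanding $u_1 w$ using the definition $u_1 = wz - q^{-1/2}X^{(0,0,0,1,0,1)} - q^{1/2}X^{(0,0,0,0,1,0)}$, writing
\begin{equation*}
u_1 w = wzw - q^{-\frac{1}{2}}X^{(0,0,0,1,0,1)}w - q^{\frac{1}{2}}X^{(0,0,0,0,1,0)}w,
\end{equation*}
and comparing with $w u_1 = w^2 z - q^{-1/2}w X^{(0,0,0,1,0,1)} - q^{1/2}w X^{(0,0,0,0,1,0)}$. Using the quantum-torus multiplication rule $X^{\mathbf c}X^{\mathbf d}=q^{\Lambda(\mathbf c,\mathbf d)/2}X^{\mathbf c+\mathbf d}$, each term in one expression is a $q$-power multiple of the corresponding term in the other; the required identity then amounts to checking that the $q$-powers match and that $w$ commutes with $zw$ versus $wz$, which follows because $w$ and $z$ sit in the same cluster (the cyclic seeds $\Sigma^{cyc}$ of Lemma~\ref{LemmaAlgStructure}) so $wz$ and $zw$ differ by a controlled power of $q$. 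The analogous computation handles $u_1 z = z u_1$.

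Next I would run the induction using the recursion $u_{n+1} = u_1 u_n - X^{(0,0,0,1,1,1)}u_{n-1}$. Assuming $u_n w = w u_n$ and $u_{n-1}w = w u_{n-1}$, I compute
\begin{equation*}
u_{n+1}w = u_1 u_n w - X^{(0,0,0,1,1,1)}u_{n-1}w = u_1 w u_n - X^{(0,0,0,1,1,1)}w u_{n-1}.
\end{equation*}
By the base case $u_1 w = w u_1$ and by Lemma~\ref{2}(3) (which gives $X^{(0,0,0,1,1,1)}w = w X^{(0,0,0,1,1,1)}$ after one checks that pure-coefficient central-type monomials commute past $w$ up to the trivial $q$-power), this rearranges to $w u_{n+1}$, completing the induction; the case of $z$ is identical. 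I expect the main obstacle to be bookkeeping the $q$-powers: one must confirm that every $q^{\Lambda(\cdot,\cdot)/2}$ factor that appears when reordering $X^{(0,0,0,1,1,1)}$, $X^{(0,0,0,1,0,1)}$ and $X^{(0,0,0,0,1,0)}$ relative to $w$ and $z$ is in fact the same on both sides, so that the coefficient monomials genuinely drop out symmetrically. This is a finite, explicit check against the matrix $\Lambda$ (and its mutated versions), so while routine it is the step where an error would most easily creep in.
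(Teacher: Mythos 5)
Your overall skeleton is the same as the paper's: verify the case $n=1$ by a direct computation and then induct via the recursion $u_{n+1}=u_1u_n-X^{(0,0,0,1,1,1)}u_{n-1}$, using the (true, easily checked) fact that $X^{(0,0,0,1,1,1)}$ commutes with $w$ and $z$. The induction step as you describe it is fine. The problem is in your base case, where the mechanism you assert is wrong. First, $w$ and $z$ do \emph{not} sit in a common cluster (in Lemma~\ref{LemmaAlgStructure} the variable $w$ occurs only in the odd cyclic seeds $\Sigma^{cyc}_{2m-1}$ and $z$ only in the even ones $\Sigma^{cyc}_{2m}$), and they do not quasi-commute: writing $w=X^{a_1}+X^{a_2}$ and $z=X^{b_1}+X^{b_2}+X^{b_3}$ and computing with the initial $\Lambda$, one finds $\Lambda(a_1,b_3)=1$ and $\Lambda(a_2,b_1)=-1$ while the other four pairings vanish, so
\begin{equation*}
zw-wz=\bigl(q^{\frac{1}{2}}-q^{-\frac{1}{2}}\bigr)\Bigl(X^{(0,0,0,1,0,1)}-X^{(0,0,0,0,1,0)}\Bigr)\neq 0,
\end{equation*}
and since two distinct monomials appear, $wz$ is not a single $q$-power times $zw$.

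Consequently your claim that in comparing $u_1w=wzw-q^{-\frac{1}{2}}X^{(0,0,0,1,0,1)}w-q^{\frac{1}{2}}X^{(0,0,0,0,1,0)}w$ with $wu_1$ ``each term in one expression is a $q$-power multiple of the corresponding term in the other'' is false: the first terms differ by $w(zw-wz)\neq 0$, and the coefficient monomials do not commute with $w$ either (one computes $X^{(0,0,0,1,0,1)}w=q\,wX^{(0,0,0,1,0,1)}$ and $X^{(0,0,0,0,1,0)}w=q^{-1}wX^{(0,0,0,0,1,0)}$). The identity $u_1w=wu_1$ is still true, but only because these two failures cancel each other exactly: $w(zw-wz)$ equals $q^{-\frac{1}{2}}[X^{(0,0,0,1,0,1)},w]+q^{\frac{1}{2}}[X^{(0,0,0,0,1,0)},w]$. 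A verification that only matches $q$-powers term by term, as you propose, would break down at this point (the same cross-cancellation occurs if one instead expands $u_1$ as the sum of four monomials, as the paper's ``direct calculation'' presumably does). So you need to either carry out the full monomial expansion and exhibit this cancellation, or correct the base-case argument to account for $zw\neq wz$; once $u_1w=wu_1$ and $u_1z=zu_1$ are genuinely established, your induction goes through and coincides with the paper's proof.
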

\begin{proof}
By a direct calculation, the equations hold for $n=1$. Then the proof follows by induction on $n$.
\end{proof}

\begin{lemma}\label{4}
For  any $n\in \mathbb{Z}$, we have that
\[ {u_1}{X_n}=\begin{cases}
{q^{\frac{1}{2}}}{X_{-1}}{y_1} + {X_{3}} & \text{ if }    n=1; \\
q{X_{0}}{y_1}{y_2} + {X_{4}} & \text{ if }    n=2; \\
{q^{\frac{5}{2}}}{X_{n - 2}}{y_1}{y_2}{y_3} + {X_{n + 2}} & \text{ if }    n\ge 3; \\
{q^{ - \frac{1}{2}}}{X_{2}}{y_3} + {X_{ - 2}} & \text{ if }    n=0; \\
{X_{1}}{y_2}{y_3} + {X_{ - 3}} & \text{ if }    n=-1; \\
{q^{\frac{1}{2}}}{X_{ n + 2}}{y_1}{y_2}{y_3} + {X_{ n - 2}} & \text{ if }    n \leq -2.
\end{cases}
\]
\end{lemma}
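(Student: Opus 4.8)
The plan is to prove each of the six cases of Lemma \ref{4} by reducing the product $u_1 X_n$ to a combination of the exchange relations already recorded in Lemma \ref{1}, using the defining identity $u = wz - q^{-\frac12}X^{(0,0,0,1,0,1)} - q^{\frac12}X^{(0,0,0,0,1,0)}$ together with the commutation facts from Lemmas \ref{2} and \ref{regular}. First I would observe that $u_1 = u = wz - q^{-\frac12}X^{(0,0,0,1,0,1)} - q^{\frac12}X^{(0,0,0,0,1,0)}$, so for a fixed $X_n$ the quantity $u_1 X_n$ becomes $w(zX_n) - q^{-\frac12}X^{(0,0,0,1,0,1)}X_n - q^{\frac12}X^{(0,0,0,0,1,0)}X_n$. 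The factors $X^{(0,0,0,1,0,1)}$ and $X^{(0,0,0,0,1,0)}$ are pure coefficient monomials in the $y_i$, so multiplying them against $X_n = X^{\mathbf e_n}$ only produces an overall power of $q$ coming from the form $\Lambda$ in the relevant seed; these two terms will ultimately supply the correction needed to isolate the clean two-term answer.

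Next I would expand $z X_n$ (or $X_n z$, chosen so the resulting seed-adjacency matches the mutation diagram in Lemma \ref{LemmaAlgStructure}) using the appropriate $z$-exchange relation from Lemma \ref{1}: for instance $zX_{2n+1} = qX_{2n}y_1y_3 + X_{2n+2}$, $zX_1 = q^{\frac12}X_0 y_1 + X_2$, and $zX_{-(2n-1)} = q^{-\frac12}X_{-(2n-2)}y_2 + X_{-2n}$. Left-multiplying each summand by $w$ and invoking the $w$-exchange relations ($wX_{2n} = q^{\frac12}X_{2n-1}y_2 + X_{2n+1}$, $wX_0 = q^{-\frac12}X_1 y_3 + X_{-1}$, $wX_{-2n} = X_{-(2n-1)}y_1y_3 + X_{-(2n+1)}$, etc.) turns $w z X_n$ into a sum of four monomial-coefficient-weighted cluster variables $X_{n\pm 2}$ and $X_n$. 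The point is that the middle terms collapse: the $X_n$-contributions produced by this double expansion are exactly cancelled by the two subtracted coefficient terms $q^{-\frac12}X^{(0,0,0,1,0,1)}X_n$ and $q^{\frac12}X^{(0,0,0,0,1,0)}X_n$. Verifying this cancellation is where the bulk of the bookkeeping lives, and it is the step I expect to be the main obstacle: one must track the half-integer powers of $q$ arising from $\Lambda_n$ (which depends on $n$ through the index of the seed) and confirm that the coefficient monomials in the $y_i$ match precisely so that the cross terms annihilate, leaving only $q^{\bullet}X_{n-2}(\text{coeff}) + X_{n+2}$.

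Finally, because the shape of the exchange relations in Lemma \ref{1} is genuinely different according to the parity and sign of $n$ (and the small-index boundary cases $n=1,2,0,-1$ use the special relations $X_0 X_3 = q^{\frac12}X_1X_2 y_3 + 1$, $X_{-2}X_1 = q^{\frac12}X_{-1}X_0 y_1 + 1$, and their cyclic companions), I would organize the argument into the six displayed cases, treating $n\ge 3$ and $n\le -2$ as the two generic families and handling $n\in\{1,2,0,-1\}$ separately where the rank-one ``$+1$'' relations intervene. For the generic cases one checks that the exponents $\tfrac{3n^2-2n-1}{2}$ and $\tfrac{3n^2-4n}{2}$ combine with the $q$-powers from $\Lambda$ to yield the uniform prefactor $q^{\frac52}$ (for $n\ge 3$) and $q^{\frac12}$ (for $n\le -2$); the boundary cases reproduce the slightly different prefactors $q^{\frac12}, q, q^{-\frac12}, 1$ recorded in the statement. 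Throughout, Lemma \ref{regular} guarantees that $u_1$ commutes with $w$ and $z$ so that the order in which I apply the $w$- and $z$-relations is immaterial, and the skew-symmetry of $\Lambda$ makes each $q$-power computation a short evaluation of $\Lambda(\mathbf c,\mathbf d)/2$ for the explicit vectors involved.
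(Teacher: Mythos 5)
Your route is genuinely different from the paper's: the paper verifies the four boundary cases $n=-1,0,1,2$ by direct computation and then runs an induction on $n$, at each step rewriting $u_1=\bigl(q^{5/2}X_{k-2}y_1y_2y_3+X_{k+2}\bigr)X_k^{-1}$ from the inductive hypothesis and absorbing the result into the three-term relations $X_{2n-1}X_{2n+2}=q^{\bullet}y_1^ny_2^{n-1}y_3^{n-1}+X_{2n}X_{2n+1}$ (and their companions) from Lemma~\ref{1}; the variables $w$ and $z$ never appear. Your plan instead attacks each $n$ directly through the defining identity $u=wz-q^{-1/2}X^{(0,0,0,1,0,1)}-q^{1/2}X^{(0,0,0,0,1,0)}$, and for odd $n$ it does work: $zX_n$ is one of the listed $z$-exchange relations, the two resulting terms are even-indexed so the $w$-exchange relations apply, and the two cross terms proportional to $X_n$ cancel against the subtracted coefficient monomials (I checked $n=1$ and $n=3$; the $q$-powers come out right using $\Lambda_n$ from Lemma~\ref{LemmaAlgStructure}).

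The genuine gap is the even case. Lemma~\ref{1} contains no two-term exchange relation of the form $zX_{2m}=\cdots$: the $z$-mutations occur in the seeds $\Sigma^{cyc}_{2m}=\{X_{2m},z,X_{2m+2},\dots\}$, whose direction-$2$ mutation exchanges $z$ with the \emph{odd}-indexed variable $X_{2m+1}$, so $z$ and $X_{2m}$ merely quasi-commute (in fact commute, by the vanishing $(2,1)$-entry of $\Lambda^{cyc}_{2m}$) rather than satisfying an exchange relation. Consequently your first step ``expand $zX_n$ using the appropriate $z$-exchange relation'' has nothing to invoke when $n$ is even; you would have to first commute $z$ past $X_{2m}$, then expand $wX_{2m}$, and then deal with products of the form $X_{2m\pm1}z$, which are only available from Lemma~\ref{1} in the opposite order and must be reversed via the bar-involution. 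None of this is in your sketch, and it is exactly where half of the six cases live (including the boundary cases $n=0,2$). A second, smaller point: Lemma~\ref{regular} gives commutativity of $u_1$ with $w$ and $z$, not of $w$ with $z$, so it does not make ``the order in which the $w$- and $z$-relations are applied immaterial'' --- the order is forced by $u=wz-\cdots$ (indeed $wz\neq zw$, as bar-invariance of $u$, $w$, $z$ shows). Either supply the missing even-index computations explicitly, or switch to the paper's induction, which sidesteps $w$ and $z$ altogether.
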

\begin{proof}
When $n=-1,0,1,2$, the proof follows from a direct calculation. We only prove it for $n\ge 3$, and it  is similar for $n \leq -2$.

When $n = 3$, we have
\begin{eqnarray*}
{u_1}{X_3}
 &=& (q{X_0}{y_1}{y_2}X_2^{ - 1} + {X_4}X_2^{ - 1}){X_3}\\
 &=& q({q^{\frac{1}{2}}}{X_1}{X_2}{y_3} + 1){y_1}{y_2}X_2^{ - 1} + {X_3}{X_4}X_2^{ - 1}\\
 &=& {q^{\frac{5}{2}}}{X_1}{y_1}{y_2}{y_3} + (q{y_1}{y_2} + {X_3}{X_{\rm{4}}})X_2^{ - 1}\\
 &=& {q^{\frac{5}{2}}}{X_1}{y_1}{y_2}{y_3} + {X_{\rm{5}}}{\rm{.}}
\end{eqnarray*}

Suppose that the case holds for $3 \leq n \leq k$. When $n=k+1$, by Lemma \ref{1} we have

(a)\ if $k$ is odd, the proof follows that
\begin{eqnarray*}
{u_1}{X_{k + 1}}
 &=& ({q^{\frac{5}{2}}}{X_{k - 2}}{y_1}{y_2}{y_3}X_k^{ - 1} + {X_{k + 2}}X_k^{ - 1}){X_{k + 1}}\\
 &=& {q^{\frac{7}{2}}}({q^{\frac{{3{{(\frac{{k - 1}}{2})}^2} - 4(\frac{{k - 1}}{2})}}{2}}}y_1^{\frac{{k - 1}}{2}}y_2^{\frac{{k - 1}}{2} - 1}y_3^{\frac{{k - 1}}{2} - 1} + {X_{k -
 1}}{X_k}){y_1}{y_2}{y_3}X_k^{ - 1} + {X_{k + 1}}{X_{k + 2}}X_k^{ - 1}\\
 &=& {q^{\frac{5}{2}}}{X_{k - 1}}{y_1}{y_2}{y_3} + ({q^{\frac{{3{{(\frac{{k + 1}}{2})}^2} - 4(\frac{{k + 1}}{2}) + 2}}{2}}}y_1^{\frac{{k + 1}}{2}}y_2^{\frac{{k + 1}}{2} - 1}y_3^{\frac{{k + 1}}{2} -
 1} + {X_{k + 1}}{X_{k + 2}})X_k^{ - 1}\\
 &=& {q^{\frac{5}{2}}}{X_{k - 1}}{y_1}{y_2}{y_3} + {X_{k + 3}};
\end{eqnarray*}

(b)\ if  $k$ is even, the proof is similar.

Hence, the proof is finished.
\end{proof}

We firstly give the cluster multiplication formulas between $u_n$ and cluster variables.
\begin{theorem}\label{5}
For any $m, n \in \ZZ_{>0}$, we have that

(1)\ ${u_n}{X_m} = {q^{\frac{{3{n^2} + 2n}}{2}}}{X_{m - 2n}}y_1^ny_2^ny_3^n + {X_{m + 2n}}\ for\ m \ge 2n+1;$

(2)\ ${u_n}{X_{2m - 1}} = {q^{\frac{{3{m^2} - 4m + 2}}{2}}}{X_{2m - 1 - 2n}}y_1^my_2^{m - 1}y_3^{m - 1} + {X_{2m - 1 + 2n}}\ for\ 1 \le 2m - 1 \le 2n;$

(3)\ ${u_n}{X_{2m}} = {q^{\frac{{3{m^2} - 2m + 1}}{2}}}{X_{2m - 2n}}y_1^my_2^my_3^{m - 1} + {X_{2m + 2n}}\ for\ 1 \le 2m \le 2n;$

(4)\ ${u_n}{X_{ - m}} = {q^{\frac{{3{n^2} - 2n}}{2}}}{X_{ - m + 2n}}y_1^ny_2^ny_3^n + {X_{ - m - 2n}}\ for \ m \geq 2n;$

(5)\ ${u_n}{X_{ - (2m - 1)}} = {q^{\frac{{3{m^2} - 2m - 1}}{2}}}{X_{ - (2m - 1) + 2n}}y_1^{m - 1}y_2^my_3^m + {X_{ - (2m - 1) - 2n}}\ for\ 0 \le 2m - 1 \le 2n - 1;$

(6)\ ${u_n}{X_{ - (2m - 2)}} = {q^{\frac{{3{m^2} - 4m}}{2}}}{X_{ - (2m - 2) + 2n}}y_1^{m - 1}y_2^{m - 1}y_3^m + {X_{ - (2m - 2) - 2n}}\ for\ 0 \le 2m - 2 \le 2n - 1.$
\end{theorem}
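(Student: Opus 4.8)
The plan is to prove all six formulas simultaneously by induction on $n$, using the defining three-term recurrence $u_{n+1} = u_1 u_n - X^{(0,0,0,1,1,1)} u_{n-1}$ (valid for $n \geq 2$). Since this recurrence fails at $n = 1$ --- there $u_2 = u_1^2 - 2X^{(0,0,0,1,1,1)}$ carries the extra factor $2$ --- I take $n = 1$ and $n = 2$ as base cases. The case $n = 1$ is exactly Lemma \ref{4}: specializing each of (1)--(6) to $n=1$ forces $m = 1,2$ (respectively $-m = 0,-1$) in the boundary branches and reproduces the six cases of Lemma \ref{4} verbatim. The case $n=2$ follows by applying Lemma \ref{4} twice to $u_1(u_1 X_m)$ and subtracting $2 X^{(0,0,0,1,1,1)} X_m$. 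Moreover, the negative-index formulas (4)--(6) are obtained by the mirror form of the same induction (running the mutation diagram of Lemma \ref{LemmaAlgStructure} to the left rather than to the right), with parallel $q$-power computations, so it suffices to treat (1), (2), (3) in detail.

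For the inductive step I expand $u_{n+1} X_m = u_1(u_n X_m) - X^{(0,0,0,1,1,1)}(u_{n-1} X_m)$, substitute the induction hypotheses for $u_n X_m$ and $u_{n-1} X_m$, and apply Lemma \ref{4} to each cluster variable appearing. Two algebraic facts drive the bookkeeping. First, $X^{(0,0,0,1,1,1)} = q^{3/2} y_1 y_2 y_3$ and $y_1^{a} y_2^{a} y_3^{a} = q^{-3a^2/2}\big(X^{(0,0,0,1,1,1)}\big)^a$, so the various $y$-monomials collapse cleanly into powers of the distinguished element $X^{(0,0,0,1,1,1)}$. Second, I need the commutation rule $X^{(0,0,0,1,1,1)} X_k = q^{\varepsilon_k} X_k X^{(0,0,0,1,1,1)}$ with $\varepsilon_k = 1$ for $k \geq 1$ and $\varepsilon_k = -1$ for $k \leq 0$; this is computed once in the quantum torus from $\Lambda$ and then propagated along the exchange relations of Lemma \ref{1}, while $X^{(0,0,0,1,1,1)}$ commutes with every $u_n$ by Lemma \ref{2}. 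In the generic case (1), where $m \geq 2n+3$ so that every index occurring stays $\geq 1$, the product $u_1(u_n X_m)$ yields four terms: the two desired terms $X_{m+2(n+1)}$ and $q^{(3(n+1)^2+2(n+1))/2} X_{m-2(n+1)} y_1^{n+1} y_2^{n+1} y_3^{n+1}$, together with two cross-terms. A direct check of $q$-exponents shows that the two cross-terms equal exactly $X^{(0,0,0,1,1,1)}(u_{n-1}X_m)$, which is then subtracted off. The whole content of the step is thus the verification that a handful of $q$-powers agree, which I would record as a few short displayed identities.

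The main obstacle is the boundary cases (2) and (3) (and their mirrors (5), (6)), where the index $m-2n$ crosses zero. Here two complications occur at once. First, the inputs need not lie in the same branch as the target: for target (2) with $2m-1 = 2n+1$ (that is, $m = n+1$) the factor $u_n X_{2m-1}$ falls under branch (1) rather than branch (2), so the transition $m=n+1$ must be checked by hand to confirm that the two expressions glue. Second, once $m - 2n \leq 0$ the shifted variable sits at a non-positive index, the relevant application of Lemma \ref{4} switches to one of its small-index branches ($n=0,\pm 1,2$), and the commutation sign $\varepsilon_k$ flips from $+1$ to $-1$; it is precisely the accumulation of these sign flips that converts the $n$-dependent exponent $q^{(3n^2+2n)/2}$ of case (1) into the $m$-dependent exponent $q^{(3m^2-4m+2)/2}$ of case (2), and similarly $q^{(3m^2-2m+1)/2}$ for case (3). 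Organizing the step so that each small-index situation is isolated, and confirming that the cross-terms still cancel against $X^{(0,0,0,1,1,1)}(u_{n-1}X_m)$ after the sign changes, is where essentially all the work lies; the generic cases are comparatively routine.
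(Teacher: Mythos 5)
Your proposal is correct and follows essentially the same route as the paper: induction on $n$ with base cases $n=1$ (Lemma \ref{4}) and $n=2$, the recurrence $u_{n+1}=u_1u_n-X^{(0,0,0,1,1,1)}u_{n-1}$ for the inductive step, cancellation of the two cross-terms against $X^{(0,0,0,1,1,1)}(u_{n-1}X_m)$, separate treatment of the boundary subcases where the index crosses zero (including the transition $2m-1=2n+1$ handled via branch (1)), and the mirror argument for (4)--(6). Your repackaging of the $q$-power bookkeeping through $X^{(0,0,0,1,1,1)}=q^{3/2}y_1y_2y_3$ and the sign rule $\varepsilon_k=\pm 1$ is a presentational variant, not a different method.
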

\begin{proof}
We only prove (1)-(3), and  the proofs of (4)-(6) are similar.

(1)\ When $n = 1$, by Lemma \ref{4} the case holds. When $n=2$, by Lemma \ref{4} we have
\begin{eqnarray*}
{u_{2}}{X_m}
 &=& ({u_1^2} - 2{X^{(0,0,0,1,1,1)}}){X_m}\\
 &=& {u_1}({q^{\frac{5}{2}}}{X_{m - 2}}{y_1}{y_2}{y_3} + {X_{m + 2}})- 2{q^{\frac{3}{2}}}{y_1}{y_2}{y_3}{X_{m}}\\
 &=& {q^{\frac{5}{2}}}({q^{\frac{5}{2}}}{X_{m - 4}}{y_1}{y_2}{y_3} + {X_{m}})y_1y_2y_3 + {q^{\frac{5}{2}}}{X_{m}}{y_1}{y_2}{y_3}
 + {X_{m + 4}} - 2{q^{\frac{3}{2}}}{y_1}{y_2}{y_3}{X_{m}}\\
 &=& {q^{8}}{X_{m - 4}}{y_1^{2}}{y_2^{2}}{y_3^{2}} + {X_{m + 4}}.
\end{eqnarray*}

Suppose that the case holds for $2 \leq n \leq k$. When $n=k+1$, the proof follows that
\begin{eqnarray*}
{u_{k + 1}}{X_m}&=& ({u_1}{u_k} - {X^{(0,0,0,1,1,1)}}{u_{k - 1}}){X_m}\\
 &=& {u_1}({q^{\frac{{3{k^2} + 2k}}{2}}}{X_{m - 2k}}y_1^ky_2^ky_3^k + {X_{m + 2k}})\\
 &&- {X^{(0,0,0,1,1,1)}}({q^{\frac{{3{{(k - 1)}^2} + 2(k - 1)}}{2}}}{X_{m - 2(k - 1)}}y_1^{k - 1}y_2^{k - 1}y_3^{k - 1} + {X_{m + 2(k - 1)}})\\
 &=& {q^{\frac{{3{k^2} + 2k}}{2}}}({q^{\frac{5}{2}}}{X_{m - 2k - 2}}{y_1}{y_2}{y_3} + {X_{m - 2k + 2}})y_1^ky_2^ky_3^k + {q^{\frac{5}{2}}}{X_{m + 2k - 2}}{y_1}{y_2}{y_3}\\
 &&+ {X_{m + 2k + 2}} - {q^{\frac{{3{{(k - 1)}^2} + 2(k - 1)}}{2}}}{q^{\frac{5}{2}}}{X_{m - 2k + 2}}{y_1}{y_2}{y_3}y_1^{k - 1}y_2^{k - 1}y_3^{k - 1}\\
 &&- {q^{\frac{5}{2}}}{X_{m + 2k - 2}}{y_1}{y_2}{y_3}\\
 &=& {q^{\frac{{3{{(k + 1)}^2} + 2(k + 1)}}{2}}}{X_{m - 2(k + 1)}}y_1^{k + 1}y_2^{k + 1}y_3^{k + 1} + {X_{m + 2(k + 1)}}.
\end{eqnarray*}

(2)\ When $n = 1$, by Lemma \ref{4} the case holds. When $n = 2$, by Lemma \ref{4} we have
\begin{eqnarray*}
{u_{2}}{X_1}
 &=& ({u_1^2} - 2{X^{(0,0,0,1,1,1)}}){X_1}\\
 &=& {u_1}({q^{\frac{1}{2}}}{X_{ - 1}}{y_1} + {X_3})- 2{q^{\frac{5}{2}}}{X_{1}}{y_1}{y_2}{y_3}\\
 &=& {q^{\frac{1}{2}}}({X_{1}}{y_2}{y_3} + {X_{-3}})y_1 + {q^{\frac{5}{2}}}{X_{1}}{y_1}{y_2}{y_3}
 + {X_{5}} - 2{q^{\frac{5}{2}}}{X_{1}}{y_1}{y_2}{y_3}\\
 &=& {q^{\frac{1}{2}}}{X_{-3}}{y_1} + {X_{5}},
\end{eqnarray*}
and  ${u_{2}}{X_3} = {q^3}{X_{-1}}{y_1^2}{y_2}{y_3} + {X_{7}}$ similarly.

Suppose that the case holds for $2 \leq n \leq k$. When $n=k+1$, we have that

(a)\ if $1 \le 2m - 1 \le 2k-2$, the proof follows that
\begin{eqnarray*}
&&{u_{k + 1}}{X_{2m - 1}}\\
 &=& ({u_1}{u_k} - {X^{(0,0,0,1,1,1)}}{u_{k - 1}}){X_{2m - 1}}\\
 &=& {u_1}({q^{\frac{{3{m^2} - 4m + 2}}{2}}}{X_{2m - 1 - 2k}}y_1^my_2^{m - 1}y_3^{m - 1} + {X_{2m - 1 + 2k}})\\
 &&- {q^{\frac{3}{2}}}{y_1}{y_2}{y_3}({q^{\frac{{3{m^2} - 4m + 2}}{2}}}{X_{2m + 1 - 2k}}y_1^my_2^{m - 1}y_3^{m - 1} + {X_{2m - 3 + 2k}})\\
 &=& {q^{\frac{{3{m^2} - 4m + 2}}{2}}}({q^{\frac{1}{2}}}{X_{2m + 1 - 2k}}{y_1}{y_2}{y_3} + {X_{2m - 3 - 2k}})y_1^my_2^{m - 1}y_3^{m - 1} + {q^{\frac{5}{2}}}{X_{2m - 3 + 2k}}{y_1}{y_2}{y_3}\\
 &&+ {X_{2m + 1 + 2k}} - {q^{\frac{3}{2}}}{y_1}{y_2}{y_3}{q^{\frac{{3{m^2} - 4m + 2}}{2}}}{X_{2m + 1 - 2k}}y_1^my_2^{m - 1}y_3^{m - 1} - {q^{\frac{5}{2}}}{X_{2m - 3 + 2k}}{y_1}{y_2}{y_3}\\
 &=& {q^{\frac{{3{m^2} - 4m + 2}}{2}}}{X_{2m - 1 - 2(k + 1)}}y_1^my_2^{m - 1}y_3^{m - 1} + {X_{2m - 1 + 2(k + 1)}};
\end{eqnarray*}

(b)\ if  $2m-1=2k-1$, the proof follows that
\begin{eqnarray*}
{u_{k + 1}}{X_{2k - 1}}
 &=& ({u_1}{u_k} - {X^{(0,0,0,1,1,1)}}{u_{k - 1}}){X_{2k - 1}}\\
 &=& {u_1}({q^{\frac{{3{k^2} - 4k + 2}}{2}}}{X_{ - 1}}y_1^ky_2^{k - 1}y_3^{k - 1} + {X_{4k - 1}})\\
 &&- {q^{\frac{3}{2}}}{y_1}{y_2}{y_3}({q^{\frac{{3{{(k - 1)}^2} + 2(k - 1)}}{2}}}{X_1}y_1^{k - 1}y_2^{k - 1}y_3^{k - 1} + {X_{4k - 3}})\\
 &=& {q^{\frac{{3{k^2} - 4k + 2}}{2}}}({X_1}{y_2}{y_3} + {X_{ - 3}})y_1^ky_2^{k - 1}y_3^{k - 1} + {q^{\frac{5}{2}}}{X_{4k - 3}}{y_1}{y_2}{y_3} + {X_{4k + 1}}\\
 &&- {q^{\frac{3}{2}}}{y_1}{y_2}{y_3}{q^{\frac{{3{{(k - 1)}^2} + 2(k - 1)}}{2}}}{X_1}y_1^{k - 1}y_2^{k - 1}y_3^{k - 1} - {q^{\frac{5}{2}}}{X_{4k - 3}}{y_1}{y_2}{y_3}\\
 &=& {q^{\frac{{3{k^2} - 4k + 2}}{2}}}{X_{ - 3}}y_1^ky_2^{k - 1}y_3^{k - 1} + {X_{4k + 1}};
\end{eqnarray*}

(c)\ if  $2m-1=2k+1$, the proof follows that
\begin{eqnarray*}
{u_{k + 1}}{X_{2k + 1}}
 &=& ({u_1}{u_k} - {X^{(0,0,0,1,1,1)}}{u_{k - 1}}){X_{2k + 1}}\\
 &=& {u_1}({q^{\frac{{3{k^2} + 2k}}{2}}}{X_1}y_1^ky_2^ky_3^k + {X_{4k + 1}})\\
 &&- {q^{\frac{3}{2}}}{y_1}{y_2}{y_3}({q^{\frac{{3{{(k - 1)}^2} + 2(k - 1)}}{2}}}{X_3}y_1^{k - 1}y_2^{k - 1}y_3^{k - 1} + {X_{4k - 1}})\\
 &=& {q^{\frac{{3{k^2} + 2k}}{2}}}({q^{\frac{1}{2}}}{X_{ - 1}}{y_1} + {X_3})y_1^ky_2^ky_3^k + {q^{\frac{5}{2}}}{X_{4k - 1}}{y_1}{y_2}{y_3} + {X_{4k + 3}}\\
 &&- {q^{\frac{3}{2}}}{y_1}{y_2}{y_3}{q^{\frac{{3{{(k - 1)}^2} + 2(k - 1)}}{2}}}{X_3}y_1^{k - 1}y_2^{k - 1}y_3^{k - 1} - {q^{\frac{5}{2}}}{X_{4k - 1}}{y_1}{y_2}{y_3}\\
 &=& {q^{\frac{{3{k^2} + 2k + 1}}{2}}}{X_{ - 1}}y_1^{k + 1}y_2^ky_3^k + {X_{4k + 3}}.
\end{eqnarray*}

(3)\ When $n = 1$, by Lemma \ref{4} the case holds. When $n = 2$, by Lemma \ref{4} we have
\begin{eqnarray*}
{u_{2}}{X_2}
 &=& ({u_1^2} - 2{X^{(0,0,0,1,1,1)}}){X_2}\\
 &=& {u_1}(q{X_0}{y_1}{y_2} + {X_4})- 2{q^{\frac{5}{2}}}{X_{2}}{y_1}{y_2}{y_3}\\
 &=& q({q^{-\frac{1}{2}}}{X_{2}}{y_3} + {X_{-2}}){y_1}{y_2} + {q^{\frac{5}{2}}}{X_{2}}{y_1}{y_2}{y_3}
 + {X_{6}} - 2{q^{\frac{5}{2}}}{X_{2}}{y_1}{y_2}{y_3}\\
 &=& {q}{X_{-2}}{y_1}{y_2} + {X_{6}},
\end{eqnarray*}
and ${u_{2}}{X_4}={q^{\frac{9}{2}}}{X_{0}}{y_1^2}{y_2^2}{y_3} + {X_{8}}$ similarly.

Suppose that the case holds for $2 \leq n \leq k$. When $n=k+1$, we have that

(a) if $1 \le 2m \le 2k-2$,  the proof follows that
\begin{eqnarray*}
{u_{k + 1}}{X_{2m}}
 &=& ({u_1}{u_k} - {X^{(0,0,0,1,1,1)}}{u_{k - 1}}){X_{2m}}\\
 &=& {u_1}({q^{\frac{{3{m^2} - 2m + 1}}{2}}}{X_{2m - 2k}}y_1^my_2^my_3^{m - 1} + {X_{2m + 2k}})\\
 &&- {q^{\frac{3}{2}}}{y_1}{y_2}{y_3}({q^{\frac{{3{m^2} - 2m + 1}}{2}}}{X_{2m + 2 - 2k}}y_1^my_2^my_3^{m - 1} + {X_{2m - 2 + 2k}})\\
 &=& {q^{\frac{{3{m^2} - 2m + 1}}{2}}}({q^{\frac{1}{2}}}{X_{2m + 2 - 2k}}{y_1}{y_2}{y_3} + {X_{2m - 2 - 2k}})y_1^my_2^my_3^{m - 1} + {q^{\frac{5}{2}}}{X_{2m - 2 + 2k}}{y_1}{y_2}{y_3}\\
 &&+ {X_{2m + 2 + 2k}} - {q^{\frac{3}{2}}}{y_1}{y_2}{y_3}{q^{\frac{{3{m^2} - 2m + 1}}{2}}}{X_{2m + 2 - 2k}}y_1^my_2^my_3^{m - 1} - {q^{\frac{5}{2}}}{X_{2m - 2 + 2k}}{y_1}{y_2}{y_3}\\
 &=& {q^{\frac{{3{m^2} - 2m + 1}}{2}}}{X_{2m - 2(k + 1)}}y_1^my_2^my_3^{m - 1} + {X_{2m + 2(k + 1)}};
\end{eqnarray*}

(b) if  $2m=2k$,  the proof follows that
\begin{eqnarray*}
{u_{k + 1}}{X_{2k}}
 &=& ({u_1}{u_k} - {X^{(0,0,0,1,1,1)}}{u_{k - 1}}){X_{2k}}\\
 &=& {u_1}({q^{\frac{{3{k^2} - 2k + 1}}{2}}}{X_0}y_1^ky_2^ky_3^{k - 1} + {X_{4k}})\\
 &&- {q^{\frac{3}{2}}}{y_1}{y_2}{y_3}({q^{\frac{{3{{(k - 1)}^2} + 2(k - 1)}}{2}}}{X_2}y_1^{k - 1}y_2^{k - 1}y_3^{k - 1} + {X_{4k - 2}})\\
 &=& {q^{\frac{{3{k^2} - 2k + 1}}{2}}}({q^{ - \frac{1}{2}}}{X_2}{y_3} + {X_{ - 2}})y_1^ky_2^ky_3^{k - 1} + {q^{\frac{5}{2}}}{X_{4k - 2}}{y_1}{y_2}{y_3} + {X_{4k + 2}}\\
 &&- {q^{\frac{3}{2}}}{y_1}{y_2}{y_3}{q^{\frac{{3{{(k - 1)}^2} + 2(k - 1)}}{2}}}{X_2}y_1^{k - 1}y_2^{k - 1}y_3^{k - 1} - {q^{\frac{5}{2}}}{X_{4k - 2}}{y_1}{y_2}{y_3}\\
 &=& {q^{\frac{{3{k^2} - 2k + 1}}{2}}}{X_{ - 2}}y_1^ky_2^ky_3^{k - 1} + {X_{4k + 2}};
\end{eqnarray*}

(c) if  $2m=2k+2$,  the proof follows that
\begin{eqnarray*}
{u_{k + 1}}{X_{2k + 2}}
 &=& ({u_1}{u_k} - {X^{(0,0,0,1,1,1)}}{u_{k - 1}}){X_{2k + 2}}\\
 &=& {u_1}({q^{\frac{{3{k^2} + 2k}}{2}}}{X_2}y_1^ky_2^ky_3^k + {X_{4k + 2}})\\
 &&- {q^{\frac{3}{2}}}{y_1}{y_2}{y_3}({q^{\frac{{3{{(k - 1)}^2} + 2(k - 1)}}{2}}}{X_4}y_1^{k - 1}y_2^{k - 1}y_3^{k - 1} + {X_{4k}})\\
 &=& {q^{\frac{{3{k^2} + 2k}}{2}}}(q{X_0}{y_1}{y_2} + {X_4})y_1^ky_2^ky_3^k + {q^{\frac{5}{2}}}{X_{4k}}{y_1}{y_2}{y_3} + {X_{4k + 4}}\\
 &&- {q^{\frac{3}{2}}}{y_1}{y_2}{y_3}{q^{\frac{{3{{(k - 1)}^2} + 2(k - 1)}}{2}}}{X_4}y_1^{k - 1}y_2^{k - 1}y_3^{k - 1} - {q^{\frac{5}{2}}}{X_{4k}}{y_1}{y_2}{y_3}\\
 &=& {q^{\frac{{3{k^2} + 4k + 2}}{2}}}{X_0}y_1^{k + 1}y_2^{k + 1}y_3^k + {X_{4k + 4}}.
\end{eqnarray*}
Hence, the proof is finished.
\end{proof}

\begin{remark}
By specializing $q=1$ and $y_{1}=y_{2}=y_{3}=1$, the obtained formulas can be viewed as generalizations of the constant coefficient
linear relations between cluster variables of type $A_2^{(1)}$.  Note that the constant coefficient
linear relations for cluster algebras from affine quivers has been studied by \cite{KS,Pa}.
\end{remark}

\begin{corollary}\label{cor}
For any  $n\in \mathbb{Z}_{>0}$, we have that

(1)\ ${X_{2n + 3}} = \sum\limits_{l = 0}^{\left\lfloor {\frac{n}{2}} \right\rfloor } {{q^{\frac{{5l}}{2}}}{u_{n - 2l}}{X_3}{{({y_1}{y_2}{y_3})}^l}}  - \sum\limits_{l = 0}^{\left\lfloor {\frac{{n - 1}}{2}} \right\rfloor } {{q^{\frac{{5(l + 1)}}{2}}}{u_{n - 1 - 2l}}{X_1}{{({y_1}{y_2}{y_3})}^{l + 1}};}$

(2)\ ${X_{2n + 4}} = \sum\limits_{l = 0}^{\left\lfloor {\frac{n}{2}} \right\rfloor } {{q^{\frac{{5l}}{2}}}{u_{n - 2l}}{X_4}{{({y_1}{y_2}{y_3})}^l}}  - \sum\limits_{l = 0}^{\left\lfloor {\frac{{n - 1}}{2}} \right\rfloor } {{q^{\frac{{5(l + 1)}}{2}}}{u_{n - 1 - 2l}}{X_2}{{({y_1}{y_2}{y_3})}^{l + 1}};}$

(3)\ ${X_{ - 2n}} = \sum\limits_{l = 0}^{\left\lfloor {\frac{n}{2}} \right\rfloor } {{q^{\frac{l}{2}}}{u_{n - 2l}}{X_0}{{({y_1}{y_2}{y_3})}^l}}  - \sum\limits_{l = 0}^{\left\lfloor {\frac{{n - 1}}{2}} \right\rfloor } {{q^{\frac{{l - 1}}{2}}}{u_{n - 1 - 2l}}{X_2}{y_3}{{({y_1}{y_2}{y_3})}^l};}$

(4)\ ${X_{ - (2n + 1)}} = \sum\limits_{l = 0}^{\left\lfloor {\frac{n}{2}} \right\rfloor } {{q^{\frac{l}{2}}}{u_{n - 2l}}{X_{ - 1}}{{({y_1}{y_2}{y_3})}^l}}  - \sum\limits_{l = 0}^{\left\lfloor {\frac{{n - 1}}{2}} \right\rfloor } {{q^{\frac{l}{2}}}{u_{n - 1 - 2l}}{X_1}{y_2}{y_3}{{({y_1}{y_2}{y_3})}^l}.}$
\end{corollary}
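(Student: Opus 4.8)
The plan is to read the four identities as the closed forms of four second-order linear recursions, one for each of the sequences $(X_{2n+3})_{n\ge 0}$, $(X_{2n+4})_{n\ge 0}$, $(X_{-2n})_{n\ge 0}$ and $(X_{-(2n+1)})_{n\ge 0}$, and to prove each by induction on $n$. I discuss only (1); the other three go the same way. The recursion I use is the $n=1$ instance of Theorem~\ref{5}, that is, the case $n\ge 3$ of Lemma~\ref{4} applied to the odd index $2n+1\ge 3$: for every $n\ge 1$ one has $u_1X_{2n+1}=q^{5/2}X_{2n-1}y_1y_2y_3+X_{2n+3}$, which I rewrite as
\[
X_{2n+3}=u_1X_{2n+1}-q^{5/2}X_{2n-1}y_1y_2y_3 .
\]
Two base cases are needed, since the recursion expresses the index-$n$ term through the indices $n-1$ and $n-2$. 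For $n=0$ the right-hand side of (1) collapses to $u_0X_3=X_3$; for $n=1$ it reads $u_1X_3-q^{5/2}X_1y_1y_2y_3$, which equals $X_5$ by Lemma~\ref{4}. Both check out.

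For the inductive step (with $n\ge 2$) I substitute formula (1), assumed for the indices $n-1$ and $n-2$ (these give $X_{2n+1}$ and $X_{2n-1}$), into the displayed recursion. Pushing $u_1$ through the first sum of the expansion of $X_{2n+1}$ uses the product rule $u_1u_k=u_{k+1}+X^{(0,0,0,1,1,1)}u_{k-1}$, which is the defining recursion of the $u_k$ (with the $n=p$ case of Lemma~\ref{3} covering the boundary $k=1$), together with the commutations $u_1X^{(0,0,0,1,1,1)}=X^{(0,0,0,1,1,1)}u_1$ and $u_kX^{(0,0,0,1,1,1)}=X^{(0,0,0,1,1,1)}u_k$ of Lemma~\ref{2}; these let $u_1$ and $y_1y_2y_3=q^{-3/2}X^{(0,0,0,1,1,1)}$ move freely past the $u_k$. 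The terms $u_{k+1}$ produced by the product rule reassemble the first sum of (1) at index $n$, while the cross terms $X^{(0,0,0,1,1,1)}u_{k-1}$, once commuted to the right of $X_3$ (resp.\ $X_1$) inside the quantum torus, are exactly cancelled by the contribution of the subtracted term $-q^{5/2}X_{2n-1}y_1y_2y_3$. What survives is precisely the claimed expression for $X_{2n+3}$.

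The genuinely quantum part, and the main obstacle, is the bookkeeping of the powers of $q$ generated when $X^{(0,0,0,1,1,1)}$ (equivalently $y_1y_2y_3$) is reordered past $X_1,X_3$ and past the $u_k$; these reorderings are what promote the naive classical coefficients to the stated $q^{5l/2}$ and $q^{5(l+1)/2}$, and the cancellation of the cross terms closes up only once all these factors are matched. A second, purely combinatorial nuisance is that the upper limits $\lfloor n/2\rfloor$ and $\lfloor (n-1)/2\rfloor$ increase differently with the parity of $n$, so the top summand of each sum must be checked separately for $n$ even and $n$ odd. The same scheme proves (2)--(4) verbatim from the corresponding cases of Lemma~\ref{4}; on the negative side the relevant recursion carries the coefficient $q^{1/2}y_1y_2y_3$ rather than $q^{5/2}y_1y_2y_3$, which is the source of the different exponents $q^{l/2}$ and $q^{(l-1)/2}$ appearing in (3) and (4). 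An equivalent alternative is to substitute Theorem~\ref{5} directly into every summand on the right-hand sides of (1)--(4) and verify that all but the single leading term telescope away; this bypasses the recursions but requires the identical $q$-power matching and parity analysis.
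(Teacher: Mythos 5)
Your proposal is correct and follows essentially the same route as the paper: the paper also proves the corollary by induction on $n$ via the recursion $X_{2n+3}=u_1X_{2n+3-2}-q^{5/2}X_{2n+3-4}y_1y_2y_3$ from Lemma~\ref{4}, substituting the inductive hypothesis at the two preceding indices, pushing $u_1$ through with the defining recursion of the $u_k$ and the commutation relations of Lemma~\ref{2}, and splitting the top summand according to the parity of $n$ (its base cases are $n=1,2$ checked against Theorem~\ref{5}, versus your $n=0,1$, which is an immaterial difference). Your identification of the $q$-power bookkeeping from reordering $y_1y_2y_3$ past $X_1,X_3$ and of the special boundary case $u_1^2=u_2+2X^{(0,0,0,1,1,1)}$ matches what the paper's computation implicitly relies on.
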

\begin{proof}
We only prove (1)and (2), and the proofs of (3) and (4)  are similar.

(1)\ When $n=1$ and $n=2$, the case follows from Theorem \ref{5}. Suppose that the case holds for $2\leq n \le k$. For $n=k+1$, by Lemma \ref{2} and \ref{4} we have that

(a)\ if $k$ is even, the proof follows that
\begin{eqnarray*}
{X_{2k + 5}} &=& {u_1}{X_{2k + 3}} - {q^{\frac{5}{2}}}{X_{2k + 1}}{y_1}{y_2}{y_3}\\
 &=& {u_1}({u_k}{X_3} + {q^{\frac{5}{2}}}{u_{k - 2}}{X_3}{y_1}{y_2}{y_3} +  \cdots  + {q^{\frac{{5k}}{4}}}{u_0}{X_3}{({y_1}{y_2}{y_3})^{\frac{k}{2}}})\\
 &&- {u_1}({q^{\frac{5}{2}}}{u_{k - 1}}{X_1}{y_1}{y_2}{y_3} + {q^5}{u_{k - 3}}{X_1}{({y_1}{y_2}{y_3})^2} +  \cdots  + {q^{\frac{{5k}}{4}}}{u_1}{X_1}{({y_1}{y_2}{y_3})^{\frac{k}{2}}})\\
 &&- {q^{\frac{5}{2}}}({u_{k - 1}}{X_3} + {q^{\frac{5}{2}}}{u_{k - 3}}{X_3}{y_1}{y_2}{y_3} +  \cdots  + {q^{\frac{{5k - 10}}{4}}}{u_1}{X_3}{({y_1}{y_2}{y_3})^{\frac{{k - 2}}{2}}}){y_1}{y_2}{y_3}\\
 &&+ {q^{\frac{5}{2}}}({q^{\frac{5}{2}}}{u_{k - 2}}{X_1}{y_1}{y_2}{y_3} + {q^5}{u_{k - 4}}{X_1}{({y_1}{y_2}{y_3})^2} +  \cdots  + {q^{\frac{{5k}}{4}}}{u_0}{X_1}{({y_1}{y_2}{y_3})^{\frac{k}{2}}}){y_1}{y_2}{y_3}\\
 &=& \sum\limits_{l = 0}^{\left\lfloor {\frac{{k + 1}}{2}} \right\rfloor } {{q^{\frac{{5l}}{2}}}{u_{k + 1 - 2l}}{X_3}{{({y_1}{y_2}{y_3})}^l}}  - \sum\limits_{l = 0}^{\left\lfloor {\frac{k}{2}} \right\rfloor } {{q^{\frac{{5(l + 1)}}{2}}}{u_{k - 2l}}{X_1}{{({y_1}{y_2}{y_3})}^{l + 1}};}
\end{eqnarray*}

(b)\ if $k$ is odd, the proof is similar.

(2)\ When $n=1$ and $n=2$, the case follows from Theorem \ref{5}. Suppose that the case holds for $2\leq n \le k$. For $n=k+1$, by Lemma \ref{2} and \ref{4} we have that

(a)\ if $k$ is even, the proof follows that
\begin{eqnarray*}
{X_{2k + 6}} &=& {u_1}{X_{2k + 4}} - {q^{\frac{5}{2}}}{X_{2k + 2}}{y_1}{y_2}{y_3}\\
 &=& {u_1}({u_k}{X_4} + {q^{\frac{5}{2}}}{u_{k - 2}}{X_4}{y_1}{y_2}{y_3} +  \cdots  + {q^{\frac{{5k}}{4}}}{u_0}{X_4}{({y_1}{y_2}{y_3})^{\frac{k}{2}}})\\
 &&- {u_1}({q^{\frac{5}{2}}}{u_{k - 1}}{X_2}{y_1}{y_2}{y_3} + {q^5}{u_{k - 3}}{X_2}{({y_1}{y_2}{y_3})^2} +  \cdots  + {q^{\frac{{5k}}{4}}}{u_1}{X_2}{({y_1}{y_2}{y_3})^{\frac{k}{2}}})\\
 &&- {q^{\frac{5}{2}}}({u_{k - 1}}{X_4} + {q^{\frac{5}{2}}}{u_{k - 3}}{X_4}{y_1}{y_2}{y_3} +  \cdots  + {q^{\frac{{5k - 10}}{4}}}{u_1}{X_4}{({y_1}{y_2}{y_3})^{\frac{{k - 2}}{2}}}){y_1}{y_2}{y_3}\\
 &&+ {q^{\frac{5}{2}}}({q^{\frac{5}{2}}}{u_{k - 2}}{X_2}{y_1}{y_2}{y_3} + {q^5}{u_{k - 4}}{X_2}{({y_1}{y_2}{y_3})^2} +  \cdots  + {q^{\frac{{5k}}{4}}}{u_0}{X_2}{({y_1}{y_2}{y_3})^{\frac{k}{2}}}){y_1}{y_2}{y_3}\\
 &=& \sum\limits_{l = 0}^{\left\lfloor {\frac{{k + 1}}{2}} \right\rfloor } {{q^{\frac{{5l}}{2}}}{u_{k + 1 - 2l}}{X_4}{{({y_1}{y_2}{y_3})}^l}}  - \sum\limits_{l = 0}^{\left\lfloor {\frac{k}{2}} \right\rfloor } {{q^{\frac{{5(l + 1)}}{2}}}{u_{k - 2l}}{X_2}{{({y_1}{y_2}{y_3})}^{l + 1}};}
\end{eqnarray*}

(b)\ if $k$ is odd, the proof is similar.
\end{proof}

\begin{remark}
Note that $z={X_0}{X_4} - {q^{\frac{1}{2}}}X_2^2{y_3}$. By the definition of $u_n$, Corollary \ref{cor} and the equation $X_{ - 1} = w{X_0} - {q^{ - \frac{1}{2}}}{X_1}{y_3}$ provide an exact expression for every quantum cluster variable as  a polynomial in $\mathbb{Z}\mathbb{P}[X_0,X_1,X_2,X_3,X_4,w]$.
\end{remark}

\begin{theorem}\label{7}
For any  $m,n\in \mathbb{Z}_{\geq0}$, we have that

(1)\ when $m$ is odd, then
\begin{eqnarray*}
(i)\ {X_m}{X_{m + 2n + 3}} &=& {X_{m + n + 1}}{X_{m + n + 2}}\\
 &&+ {q^{\frac{{5m - 5}}{4}}}{y_1}{({y_1}{y_2}{y_3})^{\frac{{m - 1}}{2}}}\sum\limits_{l = 0}^{\left\lfloor {\frac{n}{2}} \right\rfloor } {(\sum\limits_{i = 0}^l {{q^{\frac{{2i - 1 + 3l}}{2}}}} )}
 {u_{n - 2l}}{({y_1}{y_2}{y_3})^l}\\
 &&+ {q^{\frac{{5m - 5}}{4}}}{y_1}{y_2}{({y_1}{y_2}{y_3})^{\frac{{m - 1}}{2}}}\sum\limits_{l = 0}^{\left\lfloor {\frac{{n - 1}}{2}} \right\rfloor } {(\sum\limits_{i = 0}^l {{q^{\frac{{2i +
 3l}}{2}}}} )} {u_{n - 1 - 2l}}{({y_1}{y_2}{y_3})^l};\\
(ii)\ {X_m}{X_{m + 2n}} &=& {X_{m + 2\left\lfloor {\frac{n}{2}} \right\rfloor }}{X_{m + 2\left\lceil {\frac{n}{2}} \right\rceil }}\\
 &&+ {q^{\frac{{5m - 5}}{4}}}{y_1}w{({y_1}{y_2}{y_3})^{\frac{{m - 1}}{2}}}\sum\limits_{l = 0}^{\left\lfloor {\frac{{n - 2}}{2}} \right\rfloor } {(\sum\limits_{i = 0}^l {{q^{\frac{{2i - 1 +
 3l}}{2}}}} )} {u_{n - 2 - 2l}}{({y_1}{y_2}{y_3})^l};
\end{eqnarray*}

\begin{eqnarray*}
(iii)\ {X_{ - m}}{X_{ - (m + 2n + 3)}} &=& {X_{ - (m + n + 1)}}{X_{ - (m + n + 2)}}\\
 &&+ {q^{\frac{{m - 1}}{4}}}{y_2}{y_3}{({y_1}{y_2}{y_3})^{\frac{{m - 1}}{2}}}\sum\limits_{l = 0}^{\left\lfloor {\frac{n}{2}} \right\rfloor } {(\sum\limits_{i = 0}^l {{q^{\frac{{2i + 2 + l}}{2}}}} )}
 {u_{n - 2l}}{({y_1}{y_2}{y_3})^l}\\
 &&+ {q^{\frac{{m - 1}}{4}}}{y_3}{({y_1}{y_2}{y_3})^{\frac{{m + 1}}{2}}}\sum\limits_{l = 0}^{\left\lfloor {\frac{{n - 1}}{2}} \right\rfloor } {(\sum\limits_{i = 0}^l {{q^{\frac{{2i + 2 + l}}{2}}}}
 )} {u_{n - 1 - 2l}}{({y_1}{y_2}{y_3})^l};\\
(iv)\ \ \ \ {X_{ - m}}{X_{ - (m + 2n)}} &=& {X_{ - (m + 2\left\lfloor {\frac{n}{2}} \right\rfloor )}}{X_{ - (m + 2\left\lceil {\frac{n}{2}} \right\rceil )}}\\
 &&+ {q^{\frac{{m - 1}}{4}}}w{y_2}{y_3}{({y_1}{y_2}{y_3})^{\frac{{m - 1}}{2}}}\sum\limits_{l = 0}^{\left\lfloor {\frac{{n - 2}}{2}} \right\rfloor } {(\sum\limits_{i = 0}^l {{q^{\frac{{2i + 2 +
 l}}{2}}}} )} {u_{n - 2 - 2l}}{({y_1}{y_2}{y_3})^l};
\end{eqnarray*}

(2)\ when $m$ is even, then
\begin{eqnarray*}
(i)\ {X_m}{X_{m + 2n + 3}} &=& {X_{m + n + 1}}{X_{m + n + 2}}\\
 &&+ {q^{\frac{{5m - 10}}{4}}}{y_1}{y_2}{({y_1}{y_2}{y_3})^{\frac{{m - 2}}{2}}}\sum\limits_{l = 0}^{\left\lfloor {\frac{n}{2}} \right\rfloor } {(\sum\limits_{i = 0}^l {{q^{\frac{{2i + 3l}}{2}}}} )}{u_{n - 2l}}{({y_1}{y_2}{y_3})^l}\\
 &&+ {q^{\frac{{5m}}{4}}}{y_1}{({y_1}{y_2}{y_3})^{\frac{m}{2}}}\sum\limits_{l = 0}^{\left\lfloor {\frac{{n - 1}}{2}} \right\rfloor } {(\sum\limits_{i = 0}^l {{q^{\frac{{2i - 1 + 3l}}{2}}}} )} {u_{n
 - 1 - 2l}}{({y_1}{y_2}{y_3})^l}\ for\ m > 0;\\
(ii)\ \ \ \ {X_m}{X_{m + 2n}} &=& {X_{m + 2\left\lfloor {\frac{n}{2}} \right\rfloor }}{X_{m + 2\left\lceil {\frac{n}{2}} \right\rceil }}\\
&&+ {q^{\frac{{5m - 10}}{4}}}{y_1}{y_2}z{({y_1}{y_2}{y_3})^{\frac{{m - 2}}{2}}}\sum\limits_{l = 0}^{\left\lfloor {\frac{{n - 2}}{2}} \right\rfloor } {(\sum\limits_{i = 0}^l {{q^{\frac{{2i +
 3l}}{2}}}} )} {u_{n - 2 - 2l}}{({y_1}{y_2}{y_3})^l}\ for\ m > 0;\\
\end{eqnarray*}
\begin{eqnarray*}
(iii)\ {X_{ - m}}{X_{ - (m + 2n + 3)}} &=& {X_{ - (m + n + 1)}}{X_{ - (m + n + 2)}}\\
 &&+ {q^{\frac{{m - 2}}{4}}}{y_3}{({y_1}{y_2}{y_3})^{\frac{m}{2}}}\sum\limits_{l = 0}^{\left\lfloor {\frac{n}{2}} \right\rfloor } {(\sum\limits_{i = 0}^l {{q^{\frac{{2i + 2 + l}}{2}}}} )} {u_{n -
 2l}}{({y_1}{y_2}{y_3})^l}\\
 &&+ {q^{\frac{{m - 2}}{4}}}{y_2}{y_3}{({y_1}{y_2}{y_3})^{\frac{m}{2}}}\sum\limits_{l = 0}^{\left\lfloor {\frac{{n - 1}}{2}} \right\rfloor } {(\sum\limits_{i = 0}^l {{q^{\frac{{2i + 3 + l}}{2}}}} )}
 {u_{n - 1 - 2l}}{({y_1}{y_2}{y_3})^l};\\
(iv)\ \ \ \ {X_{ - m}}{X_{ - (m + 2n)}} &=& {X_{ - (m + 2\left\lfloor {\frac{n}{2}} \right\rfloor )}}{X_{ - (m + 2\left\lceil {\frac{n}{2}} \right\rceil )}}\\
 &&+ {q^{\frac{{m - 2}}{4}}}z{y_3}{({y_1}{y_2}{y_3})^{\frac{m}{2}}}\sum\limits_{l = 0}^{\left\lfloor {\frac{{n - 2}}{2}} \right\rfloor } {(\sum\limits_{i = 0}^l {{q^{\frac{{2i + 2 + l}}{2}}}} )}
 {u_{n - 2 - 2l}}{({y_1}{y_2}{y_3})^l}.
\end{eqnarray*}
\end{theorem}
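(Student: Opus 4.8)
The plan is to reduce each product $X_m X_{m+2n+3}$ (or $X_m X_{m+2n}$) to a telescoping combination of the linear relations already established, and then to recognize the resulting sum as the stated $u$-expansion. The key observation is that Theorem~\ref{5} expresses every $u_k X_j$ as a two-term sum whose leading coefficient and index-shift are known explicitly, and Corollary~\ref{cor} expresses each far cluster variable $X_{m+2n+3}$ as an alternating $\mathbb{ZP}$-linear combination of the $u_j X_3$ (or $u_j X_1$). Substituting the Corollary expansion for the \emph{larger} index variable turns the left-hand product into a sum of terms $X_m\,(u_j X_3)$, and the exchange relations in Lemma~\ref{1}, together with $u_1 X_m$ from Lemma~\ref{4}, let me collapse each such term into cluster monomials plus $u$-multiples.

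\textbf{The induction.}
I would fix the parity of $m$ and induct on $n$, treating the four sub-cases $(i)$–$(iv)$ in parallel since they differ only by the index-reflection $m\mapsto -m$ and the corresponding swap of the $y$-exponents (the base-point data being supplied by the relations $X_0 X_3,\,X_{-1}X_2,\,X_{-2}X_1$ and their reflections at the end of Lemma~\ref{1}). The base cases $n=0,1,2$ come directly from the exchange relations $X_{m}X_{m+3}$, $X_m X_{m+2}$ and from Theorem~\ref{5} applied once. For the inductive step I would write
\begin{equation*}
X_m X_{m+2n+3} = X_m\bigl(u_1 X_{m+2n+1} - q^{\frac{5}{2}} y_1 y_2 y_3\, X_{m+2n-1}\bigr)
\end{equation*}
using the three-term recursion for $X_{m+2n+3}$ coming from Lemma~\ref{4} (valid for $m+2n+1\ge 3$), then commute $u_1$ past $X_m$ via Lemma~\ref{2} and Lemma~\ref{regular}, so that $u_1\,(X_m X_{m+2n+1})$ reduces to the induction hypothesis for the index gap $2n+1$ while the correction term $q^{\frac{5}{2}} y_1 y_2 y_3\, X_m X_{m+2n-1}$ is exactly the hypothesis at gap $2n-1$. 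The two known expansions then combine, and the whole computation is bookkeeping of the $q$-powers and $y$-monomials produced by the commutations $X^{\mathbf c}X^{\mathbf d}=q^{\Lambda(\mathbf c,\mathbf d)/2}X^{\mathbf c+\mathbf d}$.

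\textbf{Identifying the coefficient sums.}
The characteristic inner sums $\sum_{i=0}^{l} q^{(2i-1+3l)/2}$ appearing in the statement are precisely what the telescoping produces: each application of the recursion contributes one more geometric term in $q^{i}$ while shifting the outer power of $(y_1y_2y_3)$ by one, and the $u$-product identity in Lemma~\ref{3}, namely $u_n u_p = u_{n+p} + X^{p(0,0,0,1,1,1)}u_{n-p}$, is what lets me re-collect mixed products $u_a u_b$ back into single $u_{a+b}$ terms plus lower $u$'s carrying the extra power of $X^{(0,0,0,1,1,1)} = q^{\frac32} y_1 y_2 y_3$. Matching the floor/ceiling ranges on the summation index $l$ against the parity of $n$ is the one place where the even/odd split of $m$ and of $n$ must be tracked carefully.

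\textbf{Main obstacle.}
The genuine difficulty is not the structure of the argument but the exact accounting of the half-integer powers of $q$ generated by every reordering of $X^{\mathbf c}$'s, since the form $\Lambda$ changes from seed to seed (Lemma~\ref{LemmaAlgStructure}) and the index-shifts in Theorem~\ref{5} depend on the parity of the target index. I expect the hardest step to be verifying that, after the two induction terms are added, the exponent of $q$ on each surviving monomial collapses to the single closed form $(5m-5)/4$ (resp.\ $(m-1)/4$, and their even-$m$ analogues) times the advertised inner geometric sum; this requires checking that the parity-dependent contributions from Theorem~\ref{5}, parts $(2)$ and $(3)$, interlock correctly across consecutive values of $n$, so I would organize the proof so that the $q$-exponent identity is isolated as the only nontrivial arithmetic to confirm.
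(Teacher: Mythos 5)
Your overall strategy coincides with the paper's: induct on $n$, generate the step via the three-term recurrence for cluster variables coming from Lemma~\ref{4} (so that $X_mX_{m+2n+5}$ is expressed through $X_mX_{m+2n+3}$ and $X_mX_{m+2n+1}$), re-collect the resulting products $u_ju_1$ with Lemma~\ref{3}, and absorb the leading quadratic term with the exchange relations of Lemma~\ref{1}. The preliminary idea of first expanding $X_{m+2n+3}$ via Corollary~\ref{cor} is not used in the paper and is redundant once you run the recurrence directly, but it is not harmful.

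There is, however, one step that fails as written. You propose to pass from $X_m\bigl(u_1X_{m+2n+1}-q^{5/2}y_1y_2y_3X_{m+2n-1}\bigr)$ to $u_1\,(X_mX_{m+2n+1})-\cdots$ by ``commuting $u_1$ past $X_m$ via Lemma~\ref{2} and Lemma~\ref{regular}.'' Neither lemma asserts that $u_1$ commutes with cluster variables, and in fact it does not: $u_1$ is a sum of four quantum torus monomials $X^{\mathbf c}$ whose commutation factors $q^{\Lambda(\mathbf c,\mathbf e_1)/2}$ with, say, $X_1$ are not all equal (three of the four exponents $\mathbf c$ have $c_4=1$ and one has $c_4=0$), so there is not even a single power of $q$ relating $u_1X_m$ and $X_mu_1$. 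This is visible already in Lemma~\ref{4} versus its bar-involute. The paper avoids the problem by using the recurrence with $u_1$ on the \emph{right}, namely $X_{m+2n+5}=X_{m+2n+3}u_1-q^{-5/2}y_1y_2y_3X_{m+2n+1}$ (the bar-involuted form of Lemma~\ref{4}), so that $X_mX_{m+2n+5}=(X_mX_{m+2n+3})u_1-\cdots$ requires no commutation at all; Lemma~\ref{2} and Lemma~\ref{regular} are then only needed to reorder the $u$'s, $w$, $z$ among themselves inside the induction hypothesis. Your argument goes through once you replace the left-multiplied recurrence by this right-multiplied one; as stated, the commutation step is a genuine gap.
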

\begin{proof}
We only prove (1), and the proof of (2) is similar.

(1)(i)\ When $n = 0$, the proof follows from Lemma \ref{1}.
When $n = 1$, by Lemma \ref{1}, \ref{2} and \ref{4} we have
\begin{eqnarray*}
&&{X_m}{X_{m + 5}}\\
 &=& {X_m}({X_{m + 3}}{u_1} - {q^{ - \frac{5}{2}}}{y_3}{y_2}{y_1}{X_{m + 1}})\\
 &=& {X_{m + 1}}{X_{m + 2}}{u_1} - {q^{\frac{3}{2}}}{X_m}{X_{m + 1}}{y_1}{y_2}{y_3} + {q^{\frac{{5m - 7}}{4}}}{y_1}{({y_1}{y_2}{y_3})^{\frac{{m - 1}}{2}}}{u_1}\\
 &=& {X_{m + 1}}({q^{ - \frac{5}{2}}}{y_3}{y_2}{y_1}{X_m} + {X_{m + 4}}) - {q^{\frac{3}{2}}}{X_m}{X_{m + 1}}{y_1}{y_2}{y_3} + {q^{\frac{{5m - 7}}{4}}}{y_1}{({y_1}{y_2}{y_3})^{\frac{{m - 1}}{2}}}{u_1}\\
 &=& {X_{m + 2}}{X_{m + 3}} + {q^{\frac{{5m - 5}}{4}}}{y_1}{y_2}{({y_1}{y_2}{y_3})^{\frac{{m - 1}}{2}}} + {q^{\frac{{5m - 7}}{4}}}{y_1}{({y_1}{y_2}{y_3})^{\frac{{m - 1}}{2}}}{u_1}.
\end{eqnarray*}

Suppose that the case holds for $1\leq n \le k$. When $n=k+1$, by Lemma \ref{1}, \ref{2} and \ref{4} we have that

(a)\ if $k$ is odd, then
\begin{eqnarray*}
&&{X_m}{X_{m + 2k + 5}}\\
 &=& {X_m}({X_{m + 2k + 3}}{u_1} - {q^{ - \frac{5}{2}}}{y_3}{y_2}{y_1}{X_{m + 2k + 1}})\\
 &=& {X_{m + k + 1}}{X_{m + k + 2}}{u_1} - {q^{\frac{3}{2}}}{X_{m + k}}{X_{m + k + 1}}{y_1}{y_2}{y_3} + {q^{\frac{{5m - 5}}{4}}}{y_1}{({y_1}{y_2}{y_3})^{\frac{{m - 1}}{2}}}\\
 &&\cdot ({q^{ - \frac{1}{2}}}{u_k}{u_1} + (q + {q^2}){u_{k - 2}}{u_1}{y_1}{y_2}{y_3} +  \cdots  + ({q^{\frac{{3k - 5}}{4}}} + {q^{\frac{{3k - 1}}{4}}} +  \cdots  + {q^{\frac{{5k - 7}}{4}}})\\
 &&\cdot u_1^2{({y_1}{y_2}{y_3})^{\frac{{k - 1}}{2}}}) + {q^{\frac{{5m - 5}}{4}}}{y_1}{y_2}{({y_1}{y_2}{y_3})^{\frac{{m - 1}}{2}}}({u_{k - 1}}{u_1} + ({q^{\frac{3}{2}}} + {q^{\frac{5}{2}}}){u_{k - 3}}{u_1}{y_1}{y_2}{y_3}\\
 &&+  \cdots  + ({q^{\frac{{3k - 3}}{4}}} + {q^{\frac{{3k + 1}}{4}}} +  \cdots  + {q^{\frac{{5k - 5}}{4}}}){u_1}{({y_1}{y_2}{y_3})^{\frac{{k - 1}}{2}}}) - {q^{\frac{3}{2}}}{q^{\frac{{5m - 5}}{4}}}{y_1}{({y_1}{y_2}{y_3})^{\frac{{m - 1}}{2}}}\\
 &&\cdot ({q^{ - \frac{1}{2}}}{u_{k - 1}} + (q + {q^2}){u_{k - 3}}{y_1}{y_2}{y_3} +  \cdots  + ({q^{\frac{{3k - 5}}{4}}} + {q^{\frac{{3k - 1}}{4}}} +  \cdots  + {q^{\frac{{5k - 7}}{4}}}){u_0}\\
 &&\cdot {({y_1}{y_2}{y_3})^{\frac{{k - 1}}{2}}}){y_1}{y_2}{y_3} - {q^{\frac{3}{2}}}{q^{\frac{{5m - 5}}{4}}}{y_1}{y_2}{({y_1}{y_2}{y_3})^{\frac{{m - 1}}{2}}}({u_{k - 2}} + ({q^{\frac{3}{2}}} + {q^{\frac{5}{2}}})\\
 &&\cdot {u_{k - 4}}{y_1}{y_2}{y_3} +  \cdots  + ({q^{\frac{{3k - 9}}{4}}} + {q^{\frac{{3k - 5}}{4}}} +  \cdots  + {q^{\frac{{5k - 15}}{4}}}){u_1}{({y_1}{y_2}{y_3})^{\frac{{k - 3}}{2}}}){y_1}{y_2}{y_3}\\
 &=& {X_{m + k + 1}}{X_{m + k + 2}}{u_1} - {q^{\frac{3}{2}}}{X_{m + k}}{X_{m + k + 1}}{y_1}{y_2}{y_3} + {q^{\frac{{5m - 5}}{4}}}{y_1}{({y_1}{y_2}{y_3})^{\frac{{m - 1}}{2}}}\\
 &&\cdot ({q^{ - \frac{1}{2}}}{u_{k + 1}} + (q + {q^2}){u_{k - 1}}{y_1}{y_2}{y_3} +  \cdots  + ({q^{\frac{{3k + 1}}{4}}} + {q^{\frac{{3k + 5}}{4}}} +  \cdots  + {q^{\frac{{5k - 1}}{4}}})\\
 &&\cdot {u_0}{({y_1}{y_2}{y_3})^{\frac{{k + 1}}{2}}}) + {q^{\frac{{5m - 5}}{4}}}{y_1}{y_2}{({y_1}{y_2}{y_3})^{\frac{{m - 1}}{2}}}({u_k} + ({q^{\frac{3}{2}}} + {q^{\frac{5}{2}}}){u_{k - 2}}{y_1}{y_2}{y_3}\\
 &&+  \cdots  + ({q^{\frac{{3k - 3}}{4}}} + {q^{\frac{{3k + 1}}{4}}} +  \cdots  + {q^{\frac{{5k - 5}}{4}}}){u_1}{({y_1}{y_2}{y_3})^{\frac{{k - 1}}{2}}}),
\end{eqnarray*}
thus, the proof follows that
\begin{eqnarray*}
&&{X_{m + k + 1}}{X_{m + k + 2}}{u_1} - {q^{\frac{3}{2}}}{X_{m + k}}{X_{m + k + 1}}{y_1}{y_2}{y_3}\\
 &=& {X_{m + k + 1}}({q^{ - \frac{5}{2}}}{y_3}{y_2}{y_1}{X_{m + k}} + {X_{m + k + 4}}) - {q^{\frac{3}{2}}}{X_{m + k}}{X_{m + k + 1}}{y_1}{y_2}{y_3}\\
 &=& {q^{\frac{{5m - 5}}{4}}}{y_1}{({y_1}{y_2}{y_3})^{\frac{{m - 1}}{2}}}{q^{\frac{{5k + 3}}{4}}}{({y_1}{y_2}{y_3})^{\frac{{k + 1}}{2}}} + {X_{m + k + 2}}{X_{m + k + 3}};
\end{eqnarray*}

(b)\ if $k$ is even, the proof is similar.

(ii)\ It is obvious for $n=0$ and $n=1$. When $n = 2$, the proof follows from  Lemma \ref{1}.
When $n = 3$, by Lemma \ref{2} and \ref{4} we have
\begin{eqnarray*}
&&{X_m}{X_{m + 6}}\\
 &=& {X_m}({X_{m + 4}}{u_1} - {q^{ - \frac{5}{2}}}{y_3}{y_2}{y_1}{X_{m + 2}})\\
 &=& {X_{m + 2}}{X_{m + 2}}{u_1} - {q^{\frac{3}{2}}}{X_m}{X_{m + 2}}{y_1}{y_2}{y_3} + {q^{\frac{{5m - 7}}{4}}}{y_1}w{({y_1}{y_2}{y_3})^{\frac{{m - 1}}{2}}}{u_1}\\
 &=& {X_{m + 2}}({q^{ - \frac{5}{2}}}{y_3}{y_2}{y_1}{X_m} + {X_{m + 4}}) - {q^{\frac{3}{2}}}{X_m}{X_{m + 2}}{y_1}{y_2}{y_3} + {q^{\frac{{5m - 7}}{4}}}{y_1}w{u_1}{({y_1}{y_2}{y_3})^{\frac{{m - 1}}{2}}}\\
 &=& {X_{m + 2}}{X_{m + 4}} + {q^{\frac{{5m - 7}}{4}}}{y_1}w{u_1}{({y_1}{y_2}{y_3})^{\frac{{m - 1}}{2}}}.
\end{eqnarray*}

Suppose that the case holds for $3 \leq n \le k$. When $n=k+1$, by Lemma \ref{2} and \ref{4} we have that

(a)\ if $k$ is odd, then
\begin{eqnarray*}
&&{X_m}{X_{m + 2k + 2}}\\
 &=& {X_m}({X_{m + 2k}}{u_1} - {q^{ - \frac{5}{2}}}{y_3}{y_2}{y_1}{X_{m + 2k - 2}})\\
 &=& {X_{m + k - 1}}{X_{m + k + 1}}{u_1} - {q^{\frac{3}{2}}}{X_{m + k - 1}}{X_{m + k - 1}}{y_1}{y_2}{y_3} + {q^{\frac{{5m - 5}}{4}}}{y_1}w{({y_1}{y_2}{y_3})^{\frac{{m - 1}}{2}}}\\
 &&\cdot ({q^{ - \frac{1}{2}}}{u_{k - 2}}{u_1} + (q + {q^2}){u_{k - 4}}{u_1}{y_1}{y_2}{y_3} +  \cdots  + ({q^{\frac{{3k - 11}}{4}}} + {q^{\frac{{3k - 7}}{4}}} +  \cdots  + {q^{\frac{{5k - 17}}{4}}})\\
 &&\cdot u_1^2{({y_1}{y_2}{y_3})^{\frac{{k - 3}}{2}}}) - {q^{\frac{3}{2}}}{q^{\frac{{5m - 5}}{4}}}{y_1}w{({y_1}{y_2}{y_3})^{\frac{{m - 1}}{2}}}({q^{ - \frac{1}{2}}}{u_{k - 3}} + (q + {q^2}){u_{k - 5}}{y_1}{y_2}{y_3}\\
 &&+  \cdots  + ({q^{\frac{{3k - 11}}{4}}} + {q^{\frac{{3k - 7}}{4}}} +  \cdots  + {q^{\frac{{5k - 17}}{4}}}){u_0}{({y_1}{y_2}{y_3})^{\frac{{k - 3}}{2}}}){y_1}{y_2}{y_3}\\
 &=& {X_{m + k - 1}}{X_{m + k + 1}}{u_1} - {q^{\frac{3}{2}}}{X_{m + k - 1}}{X_{m + k - 1}}{y_1}{y_2}{y_3} + {q^{\frac{{5m - 5}}{4}}}{y_1}w{({y_1}{y_2}{y_3})^{\frac{{m - 1}}{2}}}({q^{ - \frac{1}{2}}}{u_{k - 1}}\\
 &&+ (q + {q^2}){u_{k - 3}}{y_1}{y_2}{y_3} +  \cdots  + ({q^{\frac{{3k - 5}}{4}}} + {q^{\frac{{3k - 1}}{4}}} +  \cdots  + {q^{\frac{{5k - 11}}{4}}}){u_0}{({y_1}{y_2}{y_3})^{\frac{{k - 1}}{2}}}),
\end{eqnarray*}
thus, the proof follows that
\begin{eqnarray*}
&&{X_{m + k - 1}}{X_{m + k + 1}}{u_1} - {q^{\frac{3}{2}}}{X_{m + k - 1}}{X_{m + k - 1}}{y_1}{y_2}{y_3}\\
 &=& {X_{m + k - 1}}({q^{ - \frac{5}{2}}}{y_3}{y_2}{y_1}{X_{m + k - 1}} + {X_{m + k + 3}}) - {q^{\frac{3}{2}}}{X_{m + k - 1}}{X_{m + k - 1}}{y_1}{y_2}{y_3}\\
 &=& {q^{\frac{{5m - 5}}{4}}}{y_1}w{({y_1}{y_2}{y_3})^{\frac{{m - 1}}{2}}}{q^{\frac{{5k - 7}}{4}}}{({y_1}{y_2}{y_3})^{\frac{{k - 1}}{2}}} + {X_{m + k + 1}^{2}};
\end{eqnarray*}

(b)\ if $k$ is even, the proof is similar.

(iii)\ When $n = 0$, the proof follows from Lemma \ref{1}. When $n = 1$, by Lemma \ref{1}, \ref{2} and \ref{4} we have
\begin{eqnarray*}
&&{X_{ - m}}{X_{ - (m + 5)}}\\
 &=& {X_{ - m}}({X_{ - (m + 3)}}{u_1} - {q^{ - \frac{1}{2}}}{y_3}{y_2}{y_1}{X_{ - (m + 1)}})\\
   \end{eqnarray*}
\begin{eqnarray*}
 &=& {X_{ - (m + 1)}}{X_{ - (m + 2)}}{u_1} - {q^{\frac{3}{2}}}{X_{ - m}}{X_{ - (m + 1)}}{y_1}{y_2}{y_3} + {q^{\frac{{m + 3}}{4}}}{y_2}{y_3}{({y_1}{y_2}{y_3})^{\frac{{m - 1}}{2}}}{u_1}\\
 &=& {X_{ - (m + 1)}}({q^{ - \frac{1}{2}}}{y_3}{y_2}{y_1}{X_{ - m}} + {X_{ - (m + 4)}})\\
 &&- {q^{\frac{3}{2}}}{X_{ - m}}{X_{ - (m + 1)}}{y_1}{y_2}{y_3} + {q^{\frac{{m + 3}}{4}}}{y_2}{y_3}{({y_1}{y_2}{y_3})^{\frac{{m - 1}}{2}}}{u_1}\\
 &=& {X_{ - (m + 2)}}{X_{ - (m + 3)}} + {q^{\frac{{m + 3}}{4}}}{y_3}{({y_1}{y_2}{y_3})^{\frac{{m + 1}}{2}}} + {q^{\frac{{m + 3}}{4}}}{y_2}{y_3}{({y_1}{y_2}{y_3})^{\frac{{m - 1}}{2}}}{u_1}.
\end{eqnarray*}

Suppose that the case holds for $1\leq n \le k$. When $n=k+1$, by Lemma \ref{1}, \ref{2} and \ref{4} we have that

(a)\ if $k$ is odd, then
\begin{eqnarray*}
&&{X_{ - m}}{X_{ - (m + 2k + 5)}}\\
 &=& {X_{ - m}}({X_{ - (m + 2k + 3)}}{u_1} - {q^{ - \frac{1}{2}}}{y_3}{y_2}{y_1}{X_{ - (m + 2k + 1)}})\\
 &=& {X_{ - (m + k + 1)}}{X_{ - (m + k + 2)}}{u_1} - {q^{\frac{3}{2}}}{X_{ - (m + k)}}{X_{ - (m + k + 1)}}{y_1}{y_2}{y_3} + {q^{\frac{{m - 1}}{4}}}{y_2}{y_3}{({y_1}{y_2}{y_3})^{\frac{{m - 1}}{2}}}\\
 &&\cdot (q{u_k}{u_1} + ({q^{\frac{3}{2}}} + {q^{\frac{5}{2}}}){u_{k - 2}}{u_1}{y_1}{y_2}{y_3} +  \cdots  + ({q^{\frac{{k + 3}}{4}}} + {q^{\frac{{k + 7}}{4}}} +  \cdots  + {q^{\frac{{3k + 1}}{4}}})\\
 &&\cdot u_1^2{({y_1}{y_2}{y_3})^{\frac{{k - 1}}{2}}}) + {q^{\frac{{m - 1}}{4}}}{y_3}{({y_1}{y_2}{y_3})^{\frac{{m + 1}}{2}}}(q{u_{k - 1}}{u_1} + ({q^{\frac{3}{2}}} + {q^{\frac{5}{2}}}){u_{k - 3}}{u_1}{y_1}{y_2}{y_3}\\
 &&+  \cdots  + ({q^{\frac{{k + 3}}{4}}} + {q^{\frac{{k + 7}}{4}}} +  \cdots  + {q^{\frac{{3k + 1}}{4}}}){u_1}{({y_1}{y_2}{y_3})^{\frac{{k - 1}}{2}}}) - {q^{\frac{3}{2}}}{q^{\frac{{m - 1}}{4}}}{y_2}{y_3}{({y_1}{y_2}{y_3})^{\frac{{m - 1}}{2}}}\\
 &&\cdot (q{u_{k - 1}} + ({q^{\frac{3}{2}}} + {q^{\frac{5}{2}}}){u_{k - 3}}{y_1}{y_2}{y_3} +  \cdots  + ({q^{\frac{{k + 3}}{4}}} + {q^{\frac{{k + 7}}{4}}} +  \cdots  + {q^{\frac{{3k + 1}}{4}}}){u_0}\\
 &&\cdot {({y_1}{y_2}{y_3})^{\frac{{k - 1}}{2}}}){y_1}{y_2}{y_3} - {q^{\frac{3}{2}}}{q^{\frac{{m - 1}}{4}}}{y_3}{({y_1}{y_2}{y_3})^{\frac{{m + 1}}{2}}}(q{u_{k - 2}} + ({q^{\frac{3}{2}}} + {q^{\frac{5}{2}}})\\
 &&\cdot {u_{k - 4}}{y_1}{y_2}{y_3} +  \cdots  + ({q^{\frac{{k + 1}}{4}}} + {q^{\frac{{k + 5}}{4}}} +  \cdots  + {q^{\frac{{3k - 5}}{4}}}){u_1}{({y_1}{y_2}{y_3})^{\frac{{k - 3}}{2}}}){y_1}{y_2}{y_3}\\
 &=& {X_{ - (m + k + 1)}}{X_{ - (m + k + 2)}}{u_1} - {q^{\frac{3}{2}}}{X_{ - (m + k)}}{X_{ - (m + k + 1)}}{y_1}{y_2}{y_3} + {q^{\frac{{m - 1}}{4}}}{y_2}{y_3}{({y_1}{y_2}{y_3})^{\frac{{m - 1}}{2}}}\\
 &&\cdot (q{u_{k + 1}} + ({q^{\frac{3}{2}}} + {q^{\frac{5}{2}}}){u_{k - 1}}{y_1}{y_2}{y_3} +  \cdots  + ({q^{\frac{{k + 9}}{4}}} + {q^{\frac{{k + 13}}{4}}} +  \cdots  + {q^{\frac{{3k + 7}}{4}}})\\
 &&\cdot {u_0}{({y_1}{y_2}{y_3})^{\frac{{k + 1}}{2}}}) + {q^{\frac{{m - 1}}{4}}}{y_3}{({y_1}{y_2}{y_3})^{\frac{{m + 1}}{2}}}(q{u_k} + ({q^{\frac{3}{2}}} + {q^{\frac{5}{2}}}){u_{k - 2}}{y_1}{y_2}{y_3}\\
 &&+  \cdots  + ({q^{\frac{{k + 3}}{4}}} + {q^{\frac{{k + 7}}{4}}} +  \cdots  + {q^{\frac{{3k + 1}}{4}}}){u_1}{({y_1}{y_2}{y_3})^{\frac{{k - 1}}{2}}}),
\end{eqnarray*}
thus, the proof follows that
\begin{eqnarray*}
&&{X_{ - (m + k + 1)}}{X_{ - (m + k + 2)}}{u_1} - {q^{\frac{3}{2}}}{X_{ - (m + k)}}{X_{ - (m + k + 1)}}{y_1}{y_2}{y_3}\\
 &=& {X_{ - (m + k + 1)}}({q^{ - \frac{1}{2}}}{y_3}{y_2}{y_1}{X_{ - (m + k)}} + {X_{ - (m + k + 4)}}) - {q^{\frac{3}{2}}}{X_{ - (m + k)}}{X_{ - (m + k + 1)}}{y_1}{y_2}{y_3}\\
 &=& {q^{\frac{{m - 1}}{4}}}{y_2}{y_3}{({y_1}{y_2}{y_3})^{\frac{{m - 1}}{2}}}{q^{\frac{{k + 5}}{4}}}{({y_1}{y_2}{y_3})^{\frac{{k + 1}}{2}}} + {X_{ - (m + k + 2)}}{X_{ - (m + k + 3)}};
\end{eqnarray*}

(b)\ if $k$ is even, the proof is similar.

(iv)\ It is obvious for $n=0$ and $n=1$. When $n = 2$, the proof follows from Lemma \ref{1}.
When $n = 3$, by Lemma \ref{2} and \ref{4} we have
\begin{eqnarray*}
&&{X_{ - m}}{X_{ - (m + 6)}}\\
 &=& {X_{ - m}}({X_{ - (m + 4)}}{u_1} - {q^{ - \frac{1}{2}}}{y_3}{y_2}{y_1}{X_{ - (m + 2)}})\\
 &=& {X_{ - (m + 2)}}{X_{ - (m + 2)}}{u_1} - {q^{\frac{3}{2}}}{X_{ - m}}{X_{ - (m + 2)}}{y_1}{y_2}{y_3} + {q^{\frac{{m + 3}}{4}}}w{y_2}{y_3}{({y_1}{y_2}{y_3})^{\frac{{m - 1}}{2}}}{u_1}\\
 \end{eqnarray*}
\begin{eqnarray*}
 &=& {X_{ - (m + 2)}}({q^{ - \frac{1}{2}}}{y_3}{y_2}{y_1}{X_{ - m}} + {X_{ - (m + 4)}})\\
 &&- {q^{\frac{3}{2}}}{X_{ - m}}{X_{ - (m + 2)}}{y_1}{y_2}{y_3} + {q^{\frac{{m + 3}}{4}}}w{y_2}{y_3}{({y_1}{y_2}{y_3})^{\frac{{m - 1}}{2}}}{u_1}\\
 &=& {X_{ - (m + 2)}}{X_{ - (m + 4)}} + {q^{\frac{{m + 3}}{4}}}w{y_2}{y_3}{({y_1}{y_2}{y_3})^{\frac{{m - 1}}{2}}}{u_1}.
\end{eqnarray*}

Suppose that the case holds for $3\leq n \le k$. When $n=k+1$, by Lemma \ref{2} and \ref{4} we have that

(a)\ if $k$ is odd, then
\begin{eqnarray*}
&&{X_{ - m}}{X_{ - (m + 2k + 2)}}\\
 &=& {X_{ - m}}({X_{ - (m + 2k)}}{u_1} - {q^{ - \frac{1}{2}}}{y_3}{y_2}{y_1}{X_{ - (m + 2k - 2)}})\\
 &=& {X_{ - (m + k - 1)}}{X_{ - (m + k + 1)}}{u_1} - {q^{\frac{3}{2}}}{X_{ - (m + k - 1)}}{X_{ - (m + k - 1)}}{y_1}{y_2}{y_3} + {q^{\frac{{m - 1}}{4}}}w{y_2}{y_3}{({y_1}{y_2}{y_3})^{\frac{{m - 1}}{2}}}\\
 &&\cdot (q{u_{k - 2}}{u_1} + ({q^{\frac{3}{2}}} + {q^{\frac{5}{2}}}){u_{k - 4}}{u_1}{y_1}{y_2}{y_3} +  \cdots  + ({q^{\frac{{k + 1}}{4}}} + {q^{\frac{{k + 5}}{4}}} +  \cdots  + {q^{\frac{{3k - 5}}{4}}})\\
 &&\cdot u_1^2{({y_1}{y_2}{y_3})^{\frac{{k - 3}}{2}}}) - {q^{\frac{3}{2}}}{q^{\frac{{m - 1}}{4}}}w{y_2}{y_3}{({y_1}{y_2}{y_3})^{\frac{{m - 1}}{2}}}(q{u_{k - 3}} + ({q^{\frac{3}{2}}} + {q^{\frac{5}{2}}}){u_{k - 5}}{y_1}{y_2}{y_3}\\
 &&+  \cdots  + ({q^{\frac{{k + 1}}{4}}} + {q^{\frac{{k + 5}}{4}}} +  \cdots  + {q^{\frac{{3k - 5}}{4}}}){u_0}{({y_1}{y_2}{y_3})^{\frac{{k - 3}}{2}}}){y_1}{y_2}{y_3}\\
 &=& {X_{ - (m + k - 1)}}{X_{ - (m + k + 1)}}{u_1} - {q^{\frac{3}{2}}}{X_{ - (m + k - 1)}}{X_{ - (m + k - 1)}}{y_1}{y_2}{y_3} + {q^{\frac{{m - 1}}{4}}}w{y_2}{y_3}{({y_1}{y_2}{y_3})^{\frac{{m - 1}}{2}}}\\
 &&\cdot (q{u_{k - 1}} + ({q^{\frac{3}{2}}} + {q^{\frac{5}{2}}}){u_{k - 3}}{y_1}{y_2}{y_3} +  \cdots  + ({q^{\frac{{k + 7}}{4}}} + {q^{\frac{{k + 11}}{4}}} +  \cdots  + {q^{\frac{{3k + 1}}{4}}}){({y_1}{y_2}{y_3})^{\frac{{k - 1}}{2}}}),
\end{eqnarray*}
thus, the proof follows that
\begin{eqnarray*}
&&{X_{ - (m + k - 1)}}{X_{ - (m + k + 1)}}{u_1} - {q^{\frac{3}{2}}}{X_{ - (m + k - 1)}}{X_{ - (m + k - 1)}}{y_1}{y_2}{y_3}\\
 &=& {X_{ - (m + k - 1)}}({q^{ - \frac{1}{2}}}{y_3}{y_2}{y_1}{X_{ - (m + k - 1)}} + {X_{ - (m + k + 3)}}) - {q^{\frac{3}{2}}}{X_{ - (m + k - 1)}}{X_{ - (m + k - 1)}}{y_1}{y_2}{y_3}\\
 &=& {q^{\frac{{m - 1}}{4}}}w{y_2}{y_3}{({y_1}{y_2}{y_3})^{\frac{{m - 1}}{2}}}{q^{\frac{{k + 3}}{4}}}{({y_1}{y_2}{y_3})^{\frac{{k - 1}}{2}}} + {X_{ - (m + k + 1)}^{2}};
\end{eqnarray*}

(b)\ if $k$ is even, the proof is similar.
\end{proof}

\begin{lemma}\label{8}
For any $m\in \mathbb{Z}_{\geq 2}$, we have that

(1)\ when $m$ is odd, then
\begin{eqnarray*}
(i)\ {X_{ - m}}{X_1} &=& {q^{\frac{{m - 2}}{{\rm{2}}}}}{y_1}{X_{ - m + 2\left\lfloor {\frac{{m + 1}}{4}} \right\rfloor }}{X_{ - m + 2\left\lceil {\frac{{m + 1}}{4}} \right\rceil }} + w\sum\limits_{l = 0}^{\left\lfloor {\frac{{m - 3}}{4}} \right\rfloor } {(\sum\limits_{i = 0}^l {{q^{\frac{{2i + 3l}}{2}}}} )} {u_{\frac{{m - 3 - 4l}}{2}}}{({y_1}{y_2}{y_3})^l};\\
(ii)\ {X_{ - m}}{X_{\rm{2}}} &=& {q^{\frac{{m + {\rm{1}}}}{{\rm{2}}}}}{y_1}{y_2}{X_{\frac{{1 - m}}{2}}}{X_{\frac{{3 - m}}{2}}}+ \sum\limits_{l = 0}^{\left\lfloor {\frac{{m - 1}}{4}} \right\rfloor } {(\sum\limits_{i = 0}^l {{q^{\frac{{2i + 3l}}{2}}}} )} {u_{\frac{{m - 1 - 4l}}{2}}}{({y_1}{y_2}{y_3})^l}\\
&& + {y_2}\sum\limits_{l = 0}^{\left\lfloor {\frac{{m - 3}}{4}} \right\rfloor } {(\sum\limits_{i = 0}^l {{q^{\frac{{2i + 1 + 3l}}{2}}}} )} {u_{\frac{{m - 3 - 4l}}{2}}}{({y_1}{y_2}{y_3})^l};
\end{eqnarray*}

(2)\ when $m$ is even, then
\begin{eqnarray*}
(i)\ {X_{ - m}}{X_1} &=& {q^{\frac{{m - {\rm{1}}}}{{\rm{2}}}}}{y_1}{X_{\frac{{ - m}}{2}}}{X_{\frac{{2 - m}}{2}}}+ \sum\limits_{l = 0}^{\left\lfloor {\frac{{m - 2}}{4}} \right\rfloor } {(\sum\limits_{i = 0}^l {{q^{\frac{{2i + 3l}}{2}}}} )} {u_{\frac{{m - 2 - 4l}}{2}}}{({y_1}{y_2}{y_3})^l}\\
 &&+ {y_1}{y_3}\sum\limits_{l = 0}^{\left\lfloor {\frac{{m - 4}}{4}} \right\rfloor } {(\sum\limits_{i = 0}^l {{q^{\frac{{2i + 2 + 3l}}{2}}}} )} {u_{\frac{{m - 4 - 4l}}{2}}}{({y_1}{y_2}{y_3})^l};\\
(ii)\ {X_{ - m}}{X_2} &=& {q^{\frac{m}{{\rm{2}}}}}{y_1}{y_2}{X_{ - m + 2\left\lfloor {\frac{{m + 2}}{4}} \right\rfloor }}{X_{ - m + 2\left\lceil {\frac{{m + 2}}{4}} \right\rceil }} + z\sum\limits_{l = 0}^{\left\lfloor {\frac{{m - 2}}{4}} \right\rfloor } {(\sum\limits_{i = 0}^l {{q^{\frac{{2i + 3l}}{2}}}} )} {u_{\frac{{m - 2 - 4l}}{2}}}{({y_1}{y_2}{y_3})^l}.
\end{eqnarray*}
\end{lemma}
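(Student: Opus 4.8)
The plan is to prove all four formulas by induction on $m$ in steps of two, so that the parity of $m$ is preserved and the four families $(1)(i),(1)(ii),(2)(i),(2)(ii)$ are handled in parallel; I describe $(1)(i)$ in detail and indicate the modifications for the rest. The engine of the induction is the last case of Lemma \ref{4}: for every $m\ge 2$ one has $u_1X_{-m}=q^{\frac12}X_{-(m-2)}y_1y_2y_3+X_{-(m+2)}$, equivalently
$$X_{-(m+2)}=u_1X_{-m}-q^{\frac12}X_{-(m-2)}y_1y_2y_3.$$
Right-multiplying by $X_1$ and writing $P_m:=X_{-m}X_1$, associativity gives $(u_1X_{-m})X_1=u_1(X_{-m}X_1)$, while commuting $X_1$ past the coefficient monomial $y_1y_2y_3$ inside the quantum torus produces an explicit power of $q$ determined by $\Lambda$. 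Hence the recurrence closes on the compound quantities alone:
$$P_{m+2}=u_1P_m-q^{c}P_{m-2}\,y_1y_2y_3$$
for an explicit constant $c$, never referring to $X_{-m}$ by itself.

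For the base of the induction I would check the two smallest values of each parity directly. For $(1)(i)$ the case $m=3$ is exactly the relation $X_{-3}X_1=q^{\frac12}X_{-1}^2y_1+w$ of Lemma \ref{1}, and $m=5$ follows from a short computation with Lemma \ref{1} and Lemma \ref{4}; the minimal cases $m=2$ of $(2)(i)$ and of $(2)(ii)$ are likewise read off as $X_{-2}X_1=q^{\frac12}X_{-1}X_0y_1+1$ and $X_{-2}X_2=qX_0^2y_1y_2+z$ from Lemma \ref{1}, with $m=4$ computed in the same elementary way.

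The inductive step substitutes the claimed expressions for $P_m$ and $P_{m-2}$ into the recurrence and simplifies. The summand of the form $w\sum_l(\cdots)u_{j(l)}(y_1y_2y_3)^l$ is handled cleanly: by Lemma \ref{regular} one has $u_1w=wu_1$, by Lemma \ref{2} the element $u_1$ commutes with every power of $X^{(0,0,0,1,1,1)}$ (a fixed power of $q$ times $y_1y_2y_3$), and Lemma \ref{3} then replaces each $u_1u_{j(l)}$ by $u_{j(l)+1}+X^{(0,0,0,1,1,1)}u_{j(l)-1}$, so that this part telescopes into the $w$-sum of the target formula. The delicate summand is the ``middle product'' term $q^{d}y_1X_aX_b$ with $X_a,X_b$ consecutive negative variables: here $u_1(X_aX_b)=(u_1X_a)X_b$ is expanded by Lemma \ref{4} into $X_{a\pm2}X_b$, and each of the resulting products of two negative variables is re-expressed through Theorem \ref{7} as a new middle product plus a $w$-times-$u$-sum correction. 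Recombining these corrections with the already telescoped sum, and matching the inner geometric sums $\sum_{i=0}^{l}q^{(2i+3l)/2}$ together with the indices $\lfloor(m+1)/4\rfloor,\lceil(m+1)/4\rceil$, is what yields the stated right-hand side.

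I expect the main obstacle to be precisely this last recombination. The shape of the middle product depends, through the floor and ceiling functions, on the parity of $(m\pm1)/2$, so the step must be split into the two sub-cases ``$k$ odd'' and ``$k$ even'' exactly as in the proofs of Theorems \ref{5} and \ref{7}; in each sub-case one must show that the several half-integer powers of $q$ arising from the $\Lambda$-commutation, from Lemma \ref{4}, and from Theorem \ref{7} assemble into the single prefactors $q^{(m-2)/2}$, $q^{(2i+3l)/2}$, and so on. A convenient global check at each stage is the bar-invariance of Lemma \ref{0}: both sides are fixed by the bar-involution, which pins down and cross-validates the $q$-exponents. Finally, the formulas $(1)(ii)$ and $(2)(ii)$ are obtained by the identical scheme, right-multiplying the recurrence by $X_2$ and invoking the $z$-analogue $u_1z=zu_1$ of Lemma \ref{regular} in place of the $w$-identity.
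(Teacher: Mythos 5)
Your proposal follows essentially the same route as the paper: induction on $m$ in steps of two driven by the recurrence $X_{-(m+2)}X_j=u_1(X_{-m}X_j)-q^{c}(X_{-(m-2)}X_j)y_1y_2y_3$ from Lemma~\ref{4}, base cases read off from Lemma~\ref{1} (plus a short computation for the second base case), telescoping of the $u$-sums via Lemmas~\ref{2}, \ref{3} and \ref{regular}, and conversion of the resulting ``middle product'' of two negative cluster variables back into the target form --- the paper does this last step directly with the exchange relations of Lemma~\ref{1} (indices differing by $3$ or $4$) rather than citing Theorem~\ref{7}, but those are exactly the $n=0$ and $n=2$ instances of Theorem~\ref{7}, so this is not a substantive difference. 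The only minor imprecision is your description of the sub-case split as ``$k$ odd/$k$ even'': within each fixed parity of $m$ the relevant dichotomy is $m\bmod 4$ (equivalently the parity of $(m\pm1)/2$, which you do identify correctly), matching the paper's split into $4\mid k-1$ versus $4\nmid k-1$.
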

\begin{proof}
We only prove (1), and the proof of (2) is similar.

(1)\ (i)\ When $m = 3$, by Lemma \ref{1} the case holds. When $m = 5$, by Lemma \ref{1} and \ref{4} we have
\begin{eqnarray*}
{X_{ - 5}}{X_1}
 &=& ({u_1}{X_{ - 3}} - {q^{\frac{1}{2}}}{X_{ - 1}}{y_1}{y_2}{y_3}){X_1}\\
 &=& {u_1}({q^{\frac{1}{2}}}X_{ - 1}^2{y_1} + w) - {q^{\frac{1}{2}}}{X_{ - 1}}{y_1}{y_2}{y_3}{X_1}\\
 &=& w{u_1} + {q^{\frac{1}{2}}}({X_1}{y_2}{y_3} + {X_{ - 3}}){X_{ - 1}}{y_1}- {q^{\frac{1}{2}}}{X_{ - 1}}{y_1}{y_2}{y_3}{X_1}\\
 &=& w{u_1} + {q^{\frac{3}{2}}}{y_1}{X_{ - 3}}{X_{ - 1}}.
\end{eqnarray*}

Suppose that the case holds for $5\leq m \leq k$. When $m=k+2$, by Lemma \ref{1}, \ref{2} and \ref{4} we have that

(a)\ if $4 \mid k-1$, then
\begin{eqnarray*}
&&{X_{ - (k + 2)}}{X_1}\\
 &=& ({u_1}{X_{ - k}} - {q^{\frac{1}{2}}}{X_{ - k{\rm{ + 2}}}}{y_1}{y_2}{y_{\rm{3}}}){X_1}\\
 &=& {u_1}{q^{\frac{{k - 2}}{2}}}{y_1}{X_{\frac{{ - 1 - k}}{2}}}{X_{\frac{{3 - k}}{2}}} + {u_1}w({u_{\frac{{k - 3}}{2}}} + ({q^{\frac{3}{2}}} + {q^{\frac{5}{2}}}){u_{\frac{{k - 7}}{2}}}{y_1}{y_2}{y_3}\\
 &&+  \cdots  + ({q^{\frac{{3k - 15}}{8}}} + {q^{\frac{{3k - 7}}{8}}} +  \cdots  + {q^{\frac{{5k - 25}}{8}}})u_1^{}{({y_1}{y_2}{y_3})^{\frac{{k - 5}}{4}}})\\
 &&- {q^{\frac{3}{2}}}{q^{\frac{{k - 4}}{2}}}{y_1}{X_{\frac{{3 - k}}{2}}}{X_{\frac{{3 - k}}{2}}}{y_1}{y_2}{y_3} - w({u_{\frac{{k - 5}}{2}}} + ({q^{\frac{3}{2}}} + {q^{\frac{5}{2}}}){u_{\frac{{k - 9}}{2}}}{y_1}{y_2}{y_3}\\
 &&+  \cdots  + ({q^{\frac{{3k - 15}}{8}}} + {q^{\frac{{3k - 7}}{8}}} +  \cdots  + {q^{\frac{{5k - 25}}{8}}}){u_0}{({y_1}{y_2}{y_3})^{\frac{{k - 5}}{4}}}){q^{\frac{3}{2}}}{y_1}{y_2}{y_3}\\
 &=& {q^{\frac{k}{2}}}{y_1}{u_1}{X_{\frac{{ - 1 - k}}{2}}}{X_{\frac{{3 - k}}{2}}} - {q^{\frac{3}{2}}}{q^{\frac{{k - 4}}{2}}}{y_1}{X_{\frac{{3 - k}}{2}}}{X_{\frac{{3 - k}}{2}}}{y_1}{y_2}{y_3} + w({u_{\frac{{k - 1}}{2}}} + ({q^{\frac{3}{2}}}\\
 &&+ {q^{\frac{5}{2}}}){u_{\frac{{k - 5}}{2}}}{y_1}{y_2}{y_3} +  \cdots  + ({q^{\frac{{3k - 3}}{8}}} + {q^{\frac{{3k + 5}}{8}}} +  \cdots  + {q^{\frac{{5k - 13}}{8}}})u_0^{}{({y_1}{y_2}{y_3})^{\frac{{k - 1}}{4}}}),
\end{eqnarray*}
thus, the proof follows that
\begin{eqnarray*}
&&{q^{\frac{k}{2}}}{y_1}{u_1}{X_{\frac{{ - 1 - k}}{2}}}{X_{\frac{{3 - k}}{2}}} - {q^{\frac{3}{2}}}{q^{\frac{{k - 4}}{2}}}{y_1}{X_{\frac{{3 - k}}{2}}}{X_{\frac{{3 - k}}{2}}}{y_1}{y_2}{y_3}\\
 &=& {q^{\frac{k}{2}}}{y_1}({q^{\frac{1}{2}}}{X_{\frac{{3 - k}}{2}}}{y_1}{y_2}{y_3} + {X_{\frac{{ - 5 - k}}{2}}}){X_{\frac{{3 - k}}{2}}} - {q^{\frac{3}{2}}}{q^{\frac{{k - 4}}{2}}}{y_1}{X_{\frac{{3 - k}}{2}}}{X_{\frac{{3 - k}}{2}}}{y_1}{y_2}{y_3}\\
 &=& {q^{\frac{{5k - 5}}{8}}}w{({y_1}{y_2}{y_3})^{\frac{{k - 1}}{4}}} + {q^{\frac{k}{2}}}{y_1}{X_{\frac{{ - k - 1}}{2}}^{2}};
\end{eqnarray*}

(b)\ if $4 \nmid k-1$, the proof is similar.

(ii)\ When $m = 3$,  by Lemma \ref{1} and \ref{4} we have
\begin{eqnarray*}
{X_{ - 3}}{X_2}
 &=& ({u_1}{X_{ - 1}} - {X_1}{y_2}{y_3}){X_2}\\
 &=& {u_1}({q^{\frac{1}{2}}}{X_0}{X_1}{y_2} + 1) - {X_1}{y_2}{y_3}{X_2}\\
 &=& {u_1} + {q^{\frac{1}{2}}}({q^{ - \frac{1}{2}}}{X_2}{y_3} + {X_{ - 2}}){X_1}{y_2} - {X_1}{y_2}{y_3}{X_2}\\
 &=& {u_1} + {q^{\frac{1}{2}}}{y_2} + {q^2}{y_1}{y_2}{X_{ - 1}}{X_0}.
\end{eqnarray*}

When $m = 5$, by (i), Lemma \ref{1} and \ref{4} we have
\begin{eqnarray*}
{X_{ - 5}}{X_2}
 &=& ({u_1}{X_{ - 3}} - {q^{\frac{1}{2}}}{X_{ - 1}}{y_1}{y_2}{y_3}){X_2}\\
 &=& {u_1}({u_1} + {q^{\frac{1}{2}}}{y_2} + {q^2}{y_1}{y_2}{X_{ - 1}}{X_0}) - {q^{\frac{3}{2}}}({q^{\frac{1}{2}}}{X_0}{X_1}{y_2} + 1){y_1}{y_2}{y_3}\\
 &=& {u_2} + {q^{\frac{1}{2}}}{y_2}{u_1} + {q^{\frac{3}{2}}}{y_1}{y_2}{y_3} + q({X_1}{y_2}{y_3} + {X_{ - 3}}){X_0}{y_1}{y_2} - {q^2}{X_0}{X_1}{y_2}{y_1}{y_2}{y_3}\\
 &=& {u_2} + {q^{\frac{1}{2}}}{y_2}{u_1} + ({q^{\frac{3}{2}}} + {q^{\frac{5}{2}}}){y_1}{y_2}{y_3} + {q^3}{y_1}{y_2}{X_{ - 2}}{X_{ - 1}}.
\end{eqnarray*}

Suppose that the case holds for $5\leq m \leq k$. When $m=k+2$, by Lemma \ref{1}, \ref{2} and \ref{4} we have that

(a)\ if $4 \mid k-1$, then
\begin{eqnarray*}
&&{X_{ - (k + 2)}}{X_2}\\
 &=& ({u_1}{X_{ - k}} - {q^{\frac{1}{2}}}{X_{ - k{\rm{ + 2}}}}{y_1}{y_2}{y_{\rm{3}}}){X_2}\\
 &=& {u_1}{q^{\frac{{k + 1}}{2}}}{y_1}{y_2}{X_{\frac{{1 - k}}{2}}}{X_{\frac{{3 - k}}{2}}} + {u_1}({u_{\frac{{k - 1}}{2}}} + ({q^{\frac{3}{2}}} + {q^{\frac{5}{2}}}){u_{\frac{{k - 5}}{2}}}{y_1}{y_2}{y_3} +  \cdots \\
 &&+ ({q^{\frac{{3k - 3}}{8}}} + {q^{\frac{{3k + 5}}{8}}} +  \cdots  + {q^{\frac{{5k - 5}}{8}}})u_0^{}{({y_1}{y_2}{y_3})^{\frac{{k - 1}}{4}}}) + {u_1}{y_2}({q^{\frac{1}{2}}}{u_{\frac{{k - 3}}{2}}}\\
 &&+ ({q^2} + {q^3}){u_{\frac{{k - 7}}{2}}}{y_1}{y_2}{y_3} +  \cdots  + ({q^{\frac{{3k - 11}}{8}}} + {q^{\frac{{3k - 3}}{8}}} +  \cdots  + {q^{\frac{{5k - 21}}{8}}})\\
 &&\cdot {u_1}{({y_1}{y_2}{y_3})^{\frac{{k - 5}}{4}}}) - {q^{\frac{3}{2}}}{q^{\frac{{k - 1}}{2}}}{y_1}{y_2}{X_{\frac{{3 - k}}{2}}}{X_{\frac{{5 - k}}{2}}}{y_1}{y_2}{y_3} - ({u_{\frac{{k - 3}}{2}}} + \\
&&({q^{\frac{3}{2}}} + {q^{\frac{5}{2}}}){u_{\frac{{k - 7}}{2}}}{y_1}{y_2}{y_3} +  \cdots  + ({q^{\frac{{3k - 15}}{8}}} + {q^{\frac{{3k - 7}}{8}}} +  \cdots  + {q^{\frac{{5k - 25}}{8}}}){u_1}\\
 &&\cdot {({y_1}{y_2}{y_3})^{\frac{{k - 5}}{4}}}){q^{\frac{3}{2}}}{y_1}{y_2}{y_3} - {y_2}({q^{\frac{1}{2}}}{u_{\frac{{k - 5}}{2}}} + ({q^2} + {q^3}){u_{\frac{{k - 9}}{2}}}{y_1}{y_2}{y_3}\\
 &&+  \cdots  + ({q^{\frac{{3k - 11}}{8}}} + {q^{\frac{{3k - 3}}{8}}} +  \cdots  + {q^{\frac{{5k - 21}}{8}}}){u_0}{({y_1}{y_2}{y_3})^{\frac{{k - 5}}{4}}}){q^{\frac{3}{2}}}{y_1}{y_2}{y_3}\\
 &=& {q^{\frac{{k + 3}}{2}}}{y_1}{y_2}{u_1}{X_{\frac{{1 - k}}{2}}}{X_{\frac{{3 - k}}{2}}} - {q^{\frac{3}{2}}}{q^{\frac{{k - 1}}{2}}}{y_1}{y_2}{X_{\frac{{3 - k}}{2}}}{X_{\frac{{5 - k}}{2}}}{y_1}{y_2}{y_3} + ({u_{\frac{{k + 1}}{2}}} + ({q^{\frac{3}{2}}} + {q^{\frac{5}{2}}})\\
 &&\cdot {u_{\frac{{k - 3}}{2}}}{y_1}{y_2}{y_3} +  \cdots  + ({q^{\frac{{3k - 3}}{8}}} + {q^{\frac{{3k + 5}}{8}}} +  \cdots  + {q^{\frac{{5k - 5}}{8}}})u_1^{}{({y_1}{y_2}{y_3})^{\frac{{k - 1}}{4}}}) + {y_2}({q^{\frac{1}{2}}}{u_{\frac{{k - 1}}{2}}}\\
 &&+ ({q^2} + {q^3}){u_{\frac{{k - 5}}{2}}}{y_1}{y_2}{y_3} +  \cdots  + ({q^{\frac{{3k + 1}}{8}}} + {q^{\frac{{3k + 9}}{8}}} +  \cdots  + {q^{\frac{{5k - 9}}{8}}})u_0{({y_1}{y_2}{y_3})^{\frac{{k - 1}}{4}}}),
\end{eqnarray*}
thus, the proof follows that
\begin{eqnarray*}
&&{q^{\frac{{k + 3}}{2}}}{y_1}{y_2}{u_1}{X_{\frac{{1 - k}}{2}}}{X_{\frac{{3 - k}}{2}}} - {q^{\frac{3}{2}}}{q^{\frac{{k - 1}}{2}}}{y_1}{y_2}{X_{\frac{{3 - k}}{2}}}{X_{\frac{{5 - k}}{2}}}{y_1}{y_2}{y_3}\\
 &=& {q^{\frac{{k + 3}}{2}}}{y_1}{y_2}({q^{\frac{1}{2}}}{X_{\frac{{5 - k}}{2}}}{y_1}{y_2}{y_3} + {X_{\frac{{ - 3 - k}}{2}}}){X_{\frac{{3 - k}}{2}}} - {q^{\frac{3}{2}}}{q^{\frac{{k - 1}}{2}}}{y_1}{y_2}{X_{\frac{{3 - k}}{2}}}{X_{\frac{{5 - k}}{2}}}{y_1}{y_2}{y_3}\\
 &=& {q^{\frac{{5k - 1}}{8}}}{y_2}{({y_1}{y_2}{y_3})^{\frac{{k - 1}}{4}}} + {q^{\frac{{k + 3}}{2}}}{y_1}{y_2}{X_{\frac{{ - k + 1}}{2}}}{X_{\frac{{ - k - 1}}{2}}};
\end{eqnarray*}

(b)\ if $4 \nmid k-1$, the proof is similar.
\end{proof}

\begin{theorem}\label{9}
For any  $m,n\in \mathbb{Z}_{\geq0}$, we have that

(1)\ when $m$ is odd, then

(i)\ \textcircled{1}\ For $\frac{{m - 1}}{2} \le n \le m - 2$,
\begin{eqnarray*}
{X_{ - m}}{X_{ - m + 2n + 3}} &=& {q^{\frac{{14n - 5m + 9}}{4}}}{y_1}{y_2}{({y_1}{y_2}{y_3})^{\frac{{2n - m + 1}}{2}}}{X_{ - m + n + 1}}{X_{ - m + n + 2}}\\
 &&+ \sum\limits_{l = 0}^{\left\lfloor {\frac{n}{2}} \right\rfloor } {(\sum\limits_{i = 0}^l {{q^{\frac{{2i + 3l}}{2}}}} )} {u_{n - 2l}}{({y_1}{y_2}{y_3})^l} + {y_2}\sum\limits_{l = 0}^{\left\lfloor {\frac{{n - 1}}{2}} \right\rfloor } {(\sum\limits_{i = 0}^l {{q^{\frac{{2i + 1 + 3l}}{2}}}} )} {u_{n - 1 - 2l}}{({y_1}{y_2}{y_3})^l};
\end{eqnarray*}

\textcircled{2}\ For $n=m - 1$,
\begin{eqnarray*}
{X_{ - m}}{X_{ - m + 2n + 3}} &=& {q^{\frac{{5m - 3}}{4}}}{y_2}{({y_1}{y_2}{y_3})^{\frac{{m - 1}}{2}}}{X_{ - m + n + 1}}{X_{ - m + n + 2}}\\
 &&+ \sum\limits_{l = 0}^{\left\lfloor {\frac{n}{2}} \right\rfloor } {(\sum\limits_{i = 0}^l {{q^{\frac{{2i + 3l}}{2}}}} )} {u_{n - 2l}}{({y_1}{y_2}{y_3})^l} + {y_2}\sum\limits_{l = 0}^{\left\lfloor {\frac{{n - 1}}{2}} \right\rfloor } {(\sum\limits_{i = 0}^l {{q^{\frac{{2i + 1 + 3l}}{2}}}} )} {u_{n - 1 - 2l}}{({y_1}{y_2}{y_3})^l};
\end{eqnarray*}

\textcircled{3}\ For $n \ge m$,
\begin{eqnarray*}
{X_{ - m}}{X_{ - m + 2n + 3}} &=& {q^{\frac{{m + 3}}{4}}}{y_2}{y_3}{({y_1}{y_2}{y_3})^{\frac{{m - 1}}{2}}}{X_{ - m + n + 1}}{X_{ - m + n + 2}}\\
 &&+ \sum\limits_{l = 0}^{\left\lfloor {\frac{n}{2}} \right\rfloor } {(\sum\limits_{i = 0}^l {{q^{\frac{{2i + 3l}}{2}}}} )} {u_{n - 2l}}{({y_1}{y_2}{y_3})^l} + {y_2}\sum\limits_{l = 0}^{\left\lfloor {\frac{{n - 1}}{2}} \right\rfloor } {(\sum\limits_{i = 0}^l {{q^{\frac{{2i + 1 + 3l}}{2}}}} )} {u_{n - 1 - 2l}}{({y_1}{y_2}{y_3})^l};
\end{eqnarray*}

(ii)\ \textcircled{1}\ For $\frac{{m + 1}}{2} \le n \le m - 1$,
\begin{eqnarray*}
{X_{ - m}}{X_{ - m + 2n}} &=& {q^{\frac{{14n - 5m - 11}}{4}}}{y_1}{({y_1}{y_2}{y_3})^{\frac{{2n - m - 1}}{2}}}{X_{ - m + 2\left\lfloor {\frac{n}{2}} \right\rfloor }}{X_{ - m +
2\left\lceil {\frac{n}{2}} \right\rceil }}\\
&&+ w\sum\limits_{l = 0}^{\left\lfloor {\frac{{n - 2}}{2}} \right\rfloor } {(\sum\limits_{i = 0}^l {{q^{\frac{{2i + 3l}}{2}}}} )} {u_{n - 2 - 2l}}{({y_1}{y_2}{y_3})^l};
\end{eqnarray*}

\textcircled{2}\ For $n=m$,
\begin{eqnarray*}
{X_{ - m}}{X_{ - m + 2n}} &=& {q^{\frac{{5m - 5}}{4}}}{({y_1}{y_2}{y_3})^{\frac{{m - 1}}{2}}}{X_{ - m + 2\left\lfloor {\frac{n}{2}} \right\rfloor }}{X_{ - m + 2\left\lceil
{\frac{n}{2}} \right\rceil }}\\
&&+ w\sum\limits_{l = 0}^{\left\lfloor {\frac{{n - 2}}{2}} \right\rfloor } {(\sum\limits_{i = 0}^l {{q^{\frac{{2i + 3l}}{2}}}} )} {u_{n - 2 - 2l}}{({y_1}{y_2}{y_3})^l};
\end{eqnarray*}

\textcircled{3}\ For $n \ge m+1$,
\begin{eqnarray*}
{X_{ - m}}{X_{ - m + 2n}} &=& {q^{\frac{{m + 3}}{4}}}{y_2}{y_3}{({y_1}{y_2}{y_3})^{\frac{{m - 1}}{2}}}{X_{ - m + 2\left\lfloor {\frac{n}{2}} \right\rfloor }}{X_{ - m +
2\left\lceil {\frac{n}{2}} \right\rceil }}\\
&&+ w\sum\limits_{l = 0}^{\left\lfloor {\frac{{n - 2}}{2}} \right\rfloor } {(\sum\limits_{i = 0}^l {{q^{\frac{{2i + 3l}}{2}}}} )} {u_{n - 2 - 2l}}{({y_1}{y_2}{y_3})^l};
\end{eqnarray*}

(2)\ when $m$ is even, then

(i)\ \textcircled{1}\ For $\frac{{m - 2}}{2} \le n \le m - 2$,
\begin{eqnarray*}
{X_{ - m}}{X_{ - m + 2n + 3}} &=& {q^{\frac{{14n - 5m + 12}}{4}}}{y_1}{({y_1}{y_2}{y_3})^{\frac{{2n - m + 2}}{2}}}{X_{ - m + n + 1}}{X_{ - m + n + 2}}\\
 &&+ \sum\limits_{l = 0}^{\left\lfloor {\frac{n}{2}} \right\rfloor } {(\sum\limits_{i = 0}^l {{q^{\frac{{2i + 3l}}{2}}}} )} {u_{n - 2l}}{({y_1}{y_2}{y_3})^l}+ {y_{\rm{1}}}{y_3}\sum\limits_{l = 0}^{\left\lfloor {\frac{{n - 1}}{2}} \right\rfloor } {(\sum\limits_{i = 0}^l {{q^{\frac{{2i + 2 + 3l}}{2}}}} )} {u_{n - 1 - 2l}}{({y_1}{y_2}{y_3})^l};
\end{eqnarray*}

\textcircled{2}\ For $n=m - 1$,
\begin{eqnarray*}
{X_{ - m}}{X_{ - m + 2n + 3}} &=& {q^{\frac{{5m}}{4}}}{({y_1}{y_2}{y_3})^{\frac{m}{2}}}{X_{ - m + n + 1}}{X_{ - m + n + 2}}\\
 &&+ \sum\limits_{l = 0}^{\left\lfloor {\frac{n}{2}} \right\rfloor } {(\sum\limits_{i = 0}^l {{q^{\frac{{2i + 3l}}{2}}}} )} {u_{n - 2l}}{({y_1}{y_2}{y_3})^l} + {y_{\rm{1}}}{y_3}\sum\limits_{l = 0}^{\left\lfloor {\frac{{n - 1}}{2}} \right\rfloor } {(\sum\limits_{i = 0}^l {{q^{\frac{{2i + 2 + 3l}}{2}}}} )} {u_{n - 1 - 2l}}{({y_1}{y_2}{y_3})^l};
\end{eqnarray*}

\textcircled{3}\ For $n \ge m$,
\begin{eqnarray*}
{X_{ - m}}{X_{ - m + 2n + 3}} &=& {q^{\frac{{m + {\rm{2}}}}{4}}}{y_3}{({y_1}{y_2}{y_3})^{\frac{m}{2}}}{X_{ - m + n + 1}}{X_{ - m + n + 2}}\\
 &&+ \sum\limits_{l = 0}^{\left\lfloor {\frac{n}{2}} \right\rfloor } {(\sum\limits_{i = 0}^l {{q^{\frac{{2i + 3l}}{2}}}} )} {u_{n - 2l}}{({y_1}{y_2}{y_3})^l} + {y_{\rm{1}}}{y_3}\sum\limits_{l = 0}^{\left\lfloor {\frac{{n - 1}}{2}} \right\rfloor } {(\sum\limits_{i = 0}^l {{q^{\frac{{2i + 2 + 3l}}{2}}}} )} {u_{n - 1 - 2l}}{({y_1}{y_2}{y_3})^l};
\end{eqnarray*}

(ii)\ \textcircled{1}\ For $\frac{{m + 2}}{2} \le n \le m $,
\begin{eqnarray*}
{X_{ - m}}{X_{ - m + 2n}} &=& {q^{\frac{{14n - 5m - 14}}{4}}}{y_1}{y_2}{({y_1}{y_2}{y_3})^{\frac{{2n - m - 2}}{2}}}{X_{ - m + 2\left\lfloor {\frac{n}{2}} \right\rfloor }}{X_{
- m + 2\left\lceil {\frac{n}{2}} \right\rceil }}\\
&&+ z\sum\limits_{l = 0}^{\left\lfloor {\frac{{n - 2}}{2}} \right\rfloor } {(\sum\limits_{i = 0}^l {{q^{\frac{{2i + 3l}}{2}}}} )} {u_{n - 2 - 2l}}{({y_1}{y_2}{y_3})^l};
\end{eqnarray*}

\textcircled{2}\ For $n=m+1$,
\begin{eqnarray*}
{X_{ - m}}{X_{ - m + 2n}} &=& {q^{\frac{{5m}}{4}}}{({y_1}{y_2}{y_3})^{\frac{m}{2}}}{X_{ - m + 2\left\lfloor {\frac{n}{2}} \right\rfloor }}{X_{ - m + 2\left\lceil {\frac{n}{2}}
\right\rceil }}\\
&&+ z\sum\limits_{l = 0}^{\left\lfloor {\frac{{n - 2}}{2}} \right\rfloor } {(\sum\limits_{i = 0}^l {{q^{\frac{{2i + 3l}}{2}}}} )} {u_{n - 2 - 2l}}{({y_1}{y_2}{y_3})^l};
\end{eqnarray*}

\textcircled{3}\ For $n \ge m+2$,
\begin{eqnarray*}
{X_{ - m}}{X_{ - m + 2n}} &=& {q^{\frac{{m + 2}}{4}}}{y_3}{({y_1}{y_2}{y_3})^{\frac{m}{2}}}{X_{ - m + 2\left\lfloor {\frac{n}{2}} \right\rfloor }}{X_{ - m + 2\left\lceil
{\frac{n}{2}} \right\rceil }}\\
&&+ z\sum\limits_{l = 0}^{\left\lfloor {\frac{{n - 2}}{2}} \right\rfloor } {(\sum\limits_{i = 0}^l {{q^{\frac{{2i + 3l}}{2}}}} )} {u_{n - 2 - 2l}}{({y_1}{y_2}{y_3})^l}.
\end{eqnarray*}
\end{theorem}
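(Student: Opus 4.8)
The plan is to fix the parity of $m$ and the shape of the gap, prove the odd-$m$ odd-gap case $X_{-m}X_{-m+2n+3}$ in full, and then observe that the remaining seven cases follow by the identical method, the even-$m$ cases and the even-gap products being entirely analogous (this is the same ``prove (1), (2) is similar'' reduction already used in Theorems \ref{7} and Lemma \ref{8}). Within a fixed case I would induct on $n$. The base of the induction is supplied directly by Lemma \ref{8}: at the smallest admissible value $n=\frac{m-1}{2}$ the right-hand factor is $X_{-m+2n+3}=X_2$, so $X_{-m}X_2$ is read off verbatim from Lemma \ref{8}(1)(ii), which simultaneously anchors sub-case \textcircled{1}. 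For the even-gap products the corresponding base is $X_{-m}X_1$, and for even $m$ one starts instead from $X_{-m}X_1$ resp. $X_{-m}X_2$; in every case the base comes straight out of Lemma \ref{8}.

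For the inductive step I would use the three-term recurrence coming from Lemma \ref{4}. Once the index $-m+2n+3$ is positive and large, the right-multiplication form reads $X_{-m+2n+5}=X_{-m+2n+3}\,u_1-q^{-5/2}y_1y_2y_3\,X_{-m+2n+1}$; multiplying on the left by $X_{-m}$ gives $X_{-m}X_{-m+2n+5}=\big(X_{-m}X_{-m+2n+3}\big)u_1-q^{-5/2}y_1y_2y_3\,\big(X_{-m}X_{-m+2n+1}\big)$, where both bracketed products are known by the inductive hypothesis (note $-m+2n+1=-m+2(n-1)+3$). Substituting the two inductive expressions and expanding, the ``$u$-part'' is reorganized using Lemma \ref{2} (centrality of $X^{(0,0,0,1,1,1)}$), Lemma \ref{3} (the rule $u_nu_1=u_{n+1}+X^{(0,0,0,1,1,1)}u_{n-1}$), and Lemma \ref{regular} (so that $w$ and $z$ commute past every $u_k$); these are exactly what makes the nested $q$-sums $\sum_{i=0}^{l}q^{(2i+3l)/2}$ telescope into the sums claimed for $n+1$. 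The leading ``$X\cdot X$'' part is treated as in Theorem \ref{7}: one simplifies $X_{-m+n+1}X_{-m+n+2}\,u_1-q^{3/2}y_1y_2y_3\,X_{-m+n}X_{-m+n+1}$ by applying Lemma \ref{4} to the factor $X_{-m+n+2}u_1$ and then collapsing $X_{-m+n+1}X_{-m+n+4}$ with the appropriate exchange relation of Lemma \ref{1}.

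The genuinely delicate point — and the source of the three sub-cases \textcircled{1}\textcircled{2}\textcircled{3} — is the evaluation of this leading product as the midpoint indices $-m+n+1$ and $-m+n+2$ cross from the preprojective region ($<0$), through the exceptional indices near $0$, into the preinjective region ($>0$). The exchange relation of Lemma \ref{1} used to collapse $X_aX_{a+1}$ then has three genuinely different shapes (the generic negative relations, the exceptional relations $X_{-2}X_1$, $X_{-1}X_2$, $X_0X_3$ at the crossing, and the generic positive relations), and each shape contributes a different $q$-power and $y$-monomial to the leading coefficient; the special small-index branches of Lemma \ref{4} enter here as well. I would therefore run $n\le m-2$, $n=m-1$, and $n\ge m$ as separate inductive threads glued at the boundary: the passages $n=m-2\to m-1$ and $n=m-1\to m$ are checked by hand using the exceptional relations, after which the recursion stabilizes and a single induction finishes each of \textcircled{1} and \textcircled{3}.

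I expect the main obstacle to be precisely this boundary bookkeeping: keeping the half-integer $q$-exponents and the $y_1,y_2,y_3$-exponents consistent across the regime change, and verifying that the two summation ranges $\lfloor n/2\rfloor$ and $\lfloor(n-1)/2\rfloor$, together with their parity-dependent top terms, reproduce correctly after multiplication by $u_1$. Everything else is a finite, if lengthy, computation of exactly the type already carried out for Theorem \ref{7} and Lemma \ref{8}, and requires no structural input beyond Lemmas \ref{1}--\ref{regular} and Theorem \ref{5}.
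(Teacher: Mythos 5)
Your proposal follows essentially the same route as the paper's proof: anchor the induction at the base cases supplied by Lemma \ref{8}, step forward with the linear recurrence $X_{-m+2n+5}=X_{-m+2n+3}u_1-q^{-5/2}y_1y_2y_3X_{-m+2n+1}$ from Lemma \ref{4}, reorganize the $u$-part via Lemmas \ref{2} and \ref{3} and collapse the leading product via Lemma \ref{1}, and glue the three sub-cases at the boundary values $n=m-1$, $n=m$, $n=m+1$ exactly as the paper does. The proposal is correct in approach and correctly identifies the regime-change bookkeeping as the only delicate point.
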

\begin{proof}
We only prove (1), and the proof of (2) is similar.

(1)\ (i)\ \textcircled{1}\ Note  that $m\geq 3$ in this case. When $m=3$, by Lemma \ref{8}, the case holds. When $m=5$, the case holds by Lemma \ref{8} and the following calculation
\begin{eqnarray*}
&&{X_{ - 5}}{X_{\rm{4}}}\\
 &=& {X_{ - 5}}({X_2}{u_1} - {q^{ - 1}}{y_2}{y_1}{X_0})\\
 &=& ({q^{\frac{1}{2}}}{y_2}{u_1} + {u_2} + ({q^{\frac{3}{2}}} + {q^{\frac{5}{2}}}){y_1}{y_2}{y_3} + q{X_{ - 2}}{X_{ - 1}}{y_1}{y_2}){u_1}\\
 &&- ({q^{ - \frac{3}{2}}}{y_3}{u_1} + {y_2}{y_3} + {X_{ - 3}}{X_{ - 2}}){y_1}{y_2}\\
 &=& {u_3} + {q^{\frac{3}{2}}}{y_1}{y_2}{y_3}{u_1} + {q^{\frac{1}{2}}}{y_2}{u_2} + 2{q^{\frac{1}{2}}}{y_2}{q^{\frac{3}{2}}}{y_1}{y_2}{y_3} + ({q^{\frac{3}{2}}} + {q^{\frac{5}{2}}}){y_1}{y_2}{y_3}{u_1}\\
 &&+ {X_{ - 2}}({y_3}{y_2}{X_1} + {X_{ - 3}}){y_1}{y_2} - {q^{ - \frac{3}{2}}}{y_3}{u_1}{y_1}{y_2} - {y_2}{y_3}{y_1}{y_2} - {X_{ - 3}}{X_{ - 2}}{y_1}{y_2}\\
 &=& {u_3} + {q^{\frac{1}{2}}}{y_2}{u_2} + ({q^{\frac{3}{2}}} + {q^{\frac{5}{2}}}){y_1}{y_2}{y_3}{u_1} + ({q^2} + {q^3}){y_2}{y_1}{y_2}{y_3} + {q^{\frac{3}{2}}}{y_1}{y_2}{y_1}{y_2}{y_3}{X_0}{X_{ - 1}}.
\end{eqnarray*}

Now we consider the case $m \geq 7$. When $n=\frac{{m - 1}}{2}$, by Lemma \ref{8}, the case holds. When $n=\frac{{m - 1}}{2} + 1$, by Lemma \ref{1}, \ref{2}, \ref{4} and Theorem \ref{7} we have that

(a)\ if $4 \nmid m-1$, then
\begin{eqnarray*}
&&{X_{ - m}}{X_4}\\
 &=& {X_{ - m}}({X_2}{u_1} - {q^{ - 1}}{y_2}{y_1}{X_0})\\
 &=& {q^{\frac{{m + 1}}{2}}}{y_1}{y_2}{X_{\frac{{1 - m}}{2}}}{X_{\frac{{3 - m}}{2}}}{u_1} + ({u_{\frac{{m - 1}}{2}}} + ({q^{\frac{3}{2}}} + {q^{\frac{5}{2}}}){u_{\frac{{m - 5}}{2}}}{y_1}{y_2}{y_3} +  \cdots \\
 &&+ ({q^{\frac{{3m - 9}}{8}}} + {q^{\frac{{3m - 1}}{8}}} +  \cdots  + {q^{\frac{{5m - 15}}{8}}})u_1^{}{({y_1}{y_2}{y_3})^{\frac{{m - 3}}{4}}}){u_1} + {y_2}({q^{\frac{1}{2}}}{u_{\frac{{m - 3}}{2}}}\\
 &&+ ({q^2} + {q^3}){u_{\frac{{m - 7}}{2}}}{y_1}{y_2}{y_3} +  \cdots  + ({q^{\frac{{3m - 5}}{8}}} + {q^{\frac{{3m + 3}}{8}}} +  \cdots  + {q^{\frac{{5m - 11}}{8}}})\\
 &&\cdot {u_0}{({y_1}{y_2}{y_3})^{\frac{{m - 3}}{4}}}){u_1} - {q^{ - 1}}{X_{\frac{{1 - m}}{2}}}{X_{\frac{{ - 1 - m}}{2}}}{y_2}{y_1} - {q^{ - \frac{1}{2}}}({q^{ - 1}}{u_{\frac{{m - 3}}{2}}}\\
 &&+({q^{ - \frac{3}{2}}} + {q^{ - \frac{5}{2}}}){y_3}{y_2}{y_1}{u_{\frac{{m - 7}}{2}}} +  \cdots  + ({q^{ - \frac{{m + 5}}{8}}} + {q^{ - \frac{{m + 13}}{8}}} +  \cdots  + {q^{ - \frac{{3m - 1}}{8}}})\\
 &&\cdot {({y_3}{y_2}{y_1})^{\frac{{m - 3}}{4}}}{u_0}){y_3}{y_2}{y_1} - {q^{ - \frac{1}{2}}}({q^{ - \frac{3}{2}}}{u_{\frac{{m - 5}}{2}}} + ({q^{ - 2}} + {q^{ - 3}}){y_3}{y_2}{y_1}{u_{\frac{{m - 9}}{2}}}\\
 &&+  \cdots  + ({q^{ - \frac{{m + 5}}{8}}} + {q^{ - \frac{{m + 13}}{8}}} +  \cdots  + {q^{ - \frac{{3m - 9}}{8}}}){({y_3}{y_2}{y_1})^{\frac{{m - 7}}{4}}}{u_1}){y_3}{y_2}{y_2}{y_1}\\
 &=& {q^{\frac{{m + 1}}{2}}}{y_1}{y_2}{X_{\frac{{1 - m}}{2}}}{X_{\frac{{3 - m}}{2}}}{u_1} - {X_{\frac{{1 - m}}{2}}}{X_{\frac{{ - 1 - m}}{2}}}{y_1}{y_2} + ({u_{\frac{{m + 1}}{2}}} + ({q^{\frac{3}{2}}} + {q^{\frac{5}{2}}}){u_{\frac{{m - 3}}{2}}}{y_1}{y_2}{y_3}\\
 &&+  \cdots  + ({q^{\frac{{3m + 3}}{8}}} + {q^{\frac{{3m + 11}}{8}}} +  \cdots  + {q^{\frac{{5m - 3}}{8}}}){({y_1}{y_2}{y_3})^{\frac{{m + 1}}{4}}}) + {y_2}({q^{\frac{1}{2}}}{u_{\frac{{m - 1}}{2}}} + ({q^2} + {q^3})\\
 &&\cdot {u_{\frac{{m - 5}}{2}}}{y_1}{y_2}{y_3} +  \cdots  + ({q^{\frac{{3m - 5}}{8}}} + {q^{\frac{{3m + 3}}{8}}} +  \cdots  + {q^{\frac{{5m - 11}}{8}}}){u_1}{({y_1}{y_2}{y_3})^{\frac{{m - 3}}{4}}}),
\end{eqnarray*}
thus, the proof follows that
\begin{eqnarray*}
&&{q^{\frac{{m + 1}}{2}}}{y_1}{y_2}{X_{\frac{{1 - m}}{2}}}{X_{\frac{{3 - m}}{2}}}{u_1} - {X_{\frac{{1 - m}}{2}}}{X_{\frac{{ - 1 - m}}{2}}}{y_1}{y_2}\\
 &=& {q^{\frac{{m + 1}}{2}}}{y_1}{y_2}{X_{\frac{{1 - m}}{2}}}({q^{ - \frac{1}{2}}}{y_3}{y_2}{y_1}{X_{\frac{{7 - m}}{2}}} + {X_{\frac{{ - 1 - m}}{2}}}) - {X_{\frac{{1 - m}}{2}}}{X_{\frac{{ - 1 - m}}{2}}}{y_1}{y_2}\\
 &=& {q^{\frac{{5m + 5}}{8}}}{({y_1}{y_2}{y_3})^{\frac{{m + 1}}{4}}} + {q^{\frac{{m + 8}}{2}}}{y_1}{y_2}({y_1}{y_2}{y_3}){X_{\frac{{5 - m}}{2}}}{X_{\frac{{3 - m}}{2}}};
\end{eqnarray*}

(b)\ if $4 \mid m-1$, the proof is similar.

Suppose that the case holds for $\frac{{m - 1}}{2}+1 \leq n \leq k < m-2 $. When $n=k+1$, by Lemma \ref{1}, \ref{2} and \ref{4} we have that

(a)\ if $k$ is even, then
\begin{eqnarray*}
&&{X_{ - m}}{X_{ - m + 2k + 5}}\\
 &=& {X_{ - m}}({X_{ - m + 2k + 3}}{u_1} - {q^{ - \frac{5}{2}}}{y_3}{y_2}{y_1}{X_{ - m + 2k + 1}})\\
 &=& {q^{\frac{{14k - 5m + 9}}{4}}}{y_1}{y_2}{({y_1}{y_2}{y_3})^{\frac{{2k - m + 1}}{2}}}{X_{ - m + k + 1}}{X_{ - m + k + 2}}{u_1} + ({u_k} + ({q^{\frac{3}{2}}} + {q^{\frac{5}{2}}}){u_{k - 2}}{y_1}{y_2}{y_3}\\
 &&+  \cdots  + ({q^{\frac{{3k}}{4}}} + {q^{\frac{{3k + 4}}{4}}} +  \cdots  + {q^{\frac{{5k}}{4}}}){({y_1}{y_2}{y_3})^{\frac{k}{2}}}){u_1} + {y_2}({q^{\frac{1}{2}}}{u_{k - 1}} + ({q^2} + {q^3}){u_{k - 3}}{y_1}{y_2}{y_3}\\
 &&+  \cdots  + ({q^{\frac{{3k - 4}}{4}}} + {q^{\frac{{3k}}{4}}} +  \cdots  + {q^{\frac{{5k - 8}}{4}}}){u_1}{({y_1}{y_2}{y_3})^{\frac{{k - 2}}{2}}}){u_1} - {q^{\frac{3}{2}}}{q^{\frac{{14k - 5m - 5}}{4}}}{y_1}{y_2}\\
 &&\cdot {({y_1}{y_2}{y_3})^{\frac{{2k - m - 1}}{2}}}{X_{ - m + k}}{X_{ - m + k + 1}}{y_1}{y_2}{y_3} - {q^{\frac{3}{2}}}({u_{k - 1}} + ({q^{\frac{3}{2}}} + {q^{\frac{5}{2}}}){u_{k - 3}}{y_1}{y_2}{y_3} +  \cdots \\
 &&+ ({q^{\frac{{3k - 6}}{4}}} + {q^{\frac{{3k - 2}}{4}}} +  \cdots  + {q^{\frac{{5k - 10}}{4}}}){u_1}{({y_1}{y_2}{y_3})^{\frac{{k - 2}}{2}}}){y_1}{y_2}{y_3} - {q^{\frac{3}{2}}}{y_2}({q^{\frac{1}{2}}}{u_{k - 2}}\\
 &&+ ({q^2} + {q^3}){u_{k - 4}}{y_1}{y_2}{y_3} +  \cdots  + ({q^{\frac{{3k - 4}}{4}}} + {q^{\frac{{3k}}{4}}} +  \cdots  + {q^{\frac{{5k - 8}}{4}}}){({y_1}{y_2}{y_3})^{\frac{{k - 2}}{2}}}){y_1}{y_2}{y_3}\\
 &=& {q^{\frac{{14k - 5m + 9}}{4}}}{y_1}{y_2}{({y_1}{y_2}{y_3})^{\frac{{2k - m + 1}}{2}}}{X_{ - m + k + 1}}{X_{ - m + k + 2}}{u_1} - {q^{\frac{3}{2}}}{q^{\frac{{14k - 5m - 5}}{4}}}{y_1}{y_2}\\
 &&\cdot {({y_1}{y_2}{y_3})^{\frac{{2k - m - 1}}{2}}}{X_{ - m + k}}{X_{ - m + k + 1}}{y_1}{y_2}{y_3} + ({u_{k + 1}} + ({q^{\frac{3}{2}}} + {q^{\frac{5}{2}}}){u_{k - 1}}{y_1}{y_2}{y_3} +  \cdots \\
 &&+ ({q^{\frac{{3k}}{4}}} + {q^{\frac{{3k + 4}}{4}}} +  \cdots  + {q^{\frac{{5k}}{4}}}){u_1}{({y_1}{y_2}{y_3})^{\frac{k}{2}}}) + {y_2}({q^{\frac{1}{2}}}{u_k} + ({q^2} + {q^3}){u_{k - 2}}{y_1}{y_2}{y_3}\\
 &&+  \cdots  + ({q^{\frac{{3k + 2}}{4}}} + {q^{\frac{{3k + 6}}{4}}} +  \cdots  + {q^{\frac{{5k - 2}}{4}}}){({y_1}{y_2}{y_3})^{\frac{k}{2}}}),
\end{eqnarray*}
thus, the proof follows that
\begin{eqnarray*}
&&{q^{\frac{{14k - 5m + 9}}{4}}}{y_1}{y_2}{({y_1}{y_2}{y_3})^{\frac{{2k - m + 1}}{2}}}{X_{ - m + k + 1}}{X_{ - m + k + 2}}{u_1}\\
 &&- {q^{\frac{3}{2}}}{q^{\frac{{14k - 5m - 5}}{4}}}{y_1}{y_2}{({y_1}{y_2}{y_3})^{\frac{{2k - m - 1}}{2}}}{X_{ - m + k}}{X_{ - m + k + 1}}{y_1}{y_2}{y_3}\\
 &=& {q^{\frac{{14k - 5m + 9}}{4}}}{y_1}{y_2}{({y_1}{y_2}{y_3})^{\frac{{2k - m + 1}}{2}}}{X_{ - m + k + 2}}({q^{ - \frac{1}{2}}}{y_3}{y_2}{y_1}{X_{- m +k + 3}} + {X_{- m+k - 1}})\\
 &&- {q^{\frac{3}{2}}}{q^{\frac{{14k - 5m - 5}}{4}}}{y_1}{y_2}{({y_1}{y_2}{y_3})^{\frac{{2k - m - 1}}{2}}}{X_{ - m + k}}{X_{ - m + k + 1}}{y_1}{y_2}{y_3}\\
 &=& {q^{\frac{{14k - 5m + 23}}{4}}}{y_1}{y_2}{({y_1}{y_2}{y_3})^{\frac{{2k - m + 3}}{2}}}{X_{ - m + k + 2}}{X_{ - m + k + 3}} + {q^{\frac{{5k + 2}}{4}}}{y_2}{({y_1}{y_2}{y_3})^{\frac{k}{2}}};
\end{eqnarray*}

(b)\ if $k$ is odd, the proof is similar.

\textcircled{2}\ When $m = 1$, by Lemma \ref{1} the case holds. When $m = 3$, by Lemma \ref{1}, \ref{4} and \ref{8} we have
\begin{eqnarray*}
{X_{ - 3}}{X_{4}}
 &=& {X_{ - 3}}({X_2}{u_1} - {q^{ - 1}}{y_2}{y_1}{X_0})\\
 &=& ({q^{\frac{1}{2}}}{y_2} + {u_1} + {q^2}{y_1}{y_2}{X_0}{X_{ - 1}}){u_1}- ({q^{ - \frac{1}{2}}}{y_3} + {X_{ - 2}}{X_{ - 1}}){y_1}{y_2}\\
 &=& {u_2} + 2{q^{\frac{3}{2}}}{y_1}{y_2}{y_3} + {q^{\frac{1}{2}}}{y_2}{u_1} + {q^2}{y_1}{y_2}{X_{ - 1}}({q^{\frac{1}{2}}}{y_3}{X_2} + {X_{ - 2}})\\
 &&- {q^{ - \frac{1}{2}}}{y_3}{y_1}{y_2} - {X_{ - 2}}{X_{ - 1}}{y_1}{y_2}\\
 &=& {u_2} + {q^{\frac{1}{2}}}{y_2}{u_1} + ({q^{\frac{3}{2}}} + {q^{\frac{5}{2}}}){y_1}{y_2}{y_3} + {q^4}{y_1}{y_2}{y_2}{y_3}{X_0}{X_1}.
\end{eqnarray*}

When $m \geq 5$, by Lemma \ref{1}, \ref{2}, \ref{4} and \textcircled{1} of (i) we have
\begin{eqnarray*}
&&{X_{ - m}}{X_{m + {\rm{1}}}}\\
 &=& {X_{ - m}}({X_{m - 1}}{u_1} - {q^{ - \frac{5}{2}}}{y_3}{y_2}{y_1}{X_{m - 3}})\\
 \end{eqnarray*}

\begin{eqnarray*}
 &=& {q^{\frac{{9m - 19}}{4}}}{y_1}{y_2}{({y_1}{y_2}{y_3})^{\frac{{m - 3}}{2}}}{X_{ - 1}}{X_0}{u_1} + ({u_{m - 2}} + ({q^{\frac{3}{2}}} + {q^{\frac{5}{2}}}){u_{m - 4}}{y_1}{y_2}{y_3}\\
 &&+  \cdots  + ({q^{\frac{{3m - 9}}{4}}} + {q^{\frac{{3m - 5}}{4}}} +  \cdots  + {q^{\frac{{5m - 15}}{4}}}){u_1}{({y_1}{y_2}{y_3})^{\frac{{m - 3}}{2}}}){u_1} + {y_2}({q^{\frac{1}{2}}}{u_{m - 3}}\\
 &&+ ({q^2} + {q^3}){u_{m - 5}}{y_1}{y_2}{y_3} +  \cdots  + ({q^{\frac{{3m - 7}}{4}}} + {q^{\frac{{3m - 3}}{4}}} +  \cdots  + {q^{\frac{{5m - 13}}{4}}}){u_0}{({y_1}{y_2}{y_3})^{\frac{{m - 3}}{2}}}){u_1}\\
 &&- {q^{\frac{3}{2}}}{q^{\frac{{9m - 33}}{4}}}{y_1}{y_2}{({y_1}{y_2}{y_3})^{\frac{{m - 5}}{2}}}{X_{ - 2}}{X_{ - 1}}{y_1}{y_2}{y_3} - {q^{\frac{3}{2}}}({u_{m - 3}} + ({q^{\frac{3}{2}}} + {q^{\frac{5}{2}}}){u_{m - 5}}{y_1}{y_2}{y_3}\\
 &&+  \cdots  + ({q^{\frac{{3m - 9}}{4}}} + {q^{\frac{{3m - 5}}{4}}} +  \cdots  + {q^{\frac{{5m - 15}}{4}}}){u_0}{({y_1}{y_2}{y_3})^{\frac{{m - 3}}{2}}}){y_1}{y_2}{y_3} - {q^{\frac{3}{2}}}{y_2}({q^{\frac{1}{2}}}{u_{m - 4}}\\
 &&+ ({q^2} + {q^3}){u_{m - 6}}{y_1}{y_2}{y_3} +  \cdots  + ({q^{\frac{{3m - 13}}{4}}} + {q^{\frac{{3m - 9}}{4}}} +  \cdots  + {q^{\frac{{5m - 23}}{4}}}){u_1}{({y_1}{y_2}{y_3})^{\frac{{m - 5}}{2}}}){y_1}{y_2}{y_3}\\
 &=& {q^{\frac{{9m - 19}}{4}}}{y_1}{y_2}{({y_1}{y_2}{y_3})^{\frac{{m - 3}}{2}}}{X_{ - 1}}{X_0}{u_1} - {q^{\frac{3}{2}}}{q^{\frac{{9m - 33}}{4}}}{y_1}{y_2}{({y_1}{y_2}{y_3})^{\frac{{m - 5}}{2}}}{X_{ - 2}}{X_{ - 1}}{y_1}{y_2}{y_3}\\
 &&+ ({u_{m - 1}} + ({q^{\frac{3}{2}}} + {q^{\frac{5}{2}}}){u_{m - 3}}{y_1}{y_2}{y_3} +  \cdots  + ({q^{\frac{{3m - 3}}{4}}} + {q^{\frac{{3m + 1}}{4}}} +  \cdots  + {q^{\frac{{5m - 9}}{4}}}){u_0}{({y_1}{y_2}{y_3})^{\frac{{m - 1}}{2}}})\\
 &&+ {y_2}({q^{\frac{1}{2}}}{u_{m - 2}} + ({q^2} + {q^3}){u_{m - 4}}{y_1}{y_2}{y_3} +  \cdots  + ({q^{\frac{{3m - 7}}{4}}} + {q^{\frac{{3m - 3}}{4}}} +  \cdots  + {q^{\frac{{5m - 13}}{4}}}){u_1}{({y_1}{y_2}{y_3})^{\frac{{m - 3}}{2}}}),
\end{eqnarray*}
thus, the proof follows that
\begin{eqnarray*}
&&{q^{\frac{{9m - 19}}{4}}}{y_1}{y_2}{({y_1}{y_2}{y_3})^{\frac{{m - 3}}{2}}}{X_{ - 1}}{X_0}{u_1} - {q^{\frac{3}{2}}}{q^{\frac{{9m - 33}}{4}}}{y_1}{y_2}{({y_1}{y_2}{y_3})^{\frac{{m - 5}}{2}}}{X_{ - 2}}{X_{ - 1}}{y_1}{y_2}{y_3}\\
 &=& {q^{\frac{{9m - 19}}{4}}}{y_1}{y_2}{({y_1}{y_2}{y_3})^{\frac{{m - 3}}{2}}}{X_{ - 1}}({q^{\frac{1}{2}}}{y_3}{X_2} + {X_{ - 2}})\\
 &&- {q^{\frac{3}{2}}}{q^{\frac{{9m - 33}}{4}}}{y_1}{y_2}{({y_1}{y_2}{y_3})^{\frac{{m - 5}}{2}}}{X_{ - 2}}{X_{ - 1}}{y_1}{y_2}{y_3}\\
 &=& {q^{\frac{{5m - 3}}{4}}}{y_2}{({y_1}{y_2}{y_3})^{\frac{{m - 1}}{2}}}{X_0}{X_1} + {q^{\frac{{5m - 5}}{4}}}{({y_1}{y_2}{y_3})^{\frac{{m - 1}}{2}}}.
\end{eqnarray*}

\textcircled{3}\ Firstly, we consider the case for $n=m$: when $m = 1$, by Lemma \ref{1} and \ref{4} we have
\begin{eqnarray*}
{X_{ - 1}}{X_{\rm{4}}}
 &=& {X_{ - 1}}({X_2}{u_1} - {q^{ - 1}}{y_2}{y_1}{X_0})\\
 &=& ({q^{\frac{1}{2}}}{X_1}{X_0}{y_2} + 1){u_1} - {y_2}{y_1}{X_0}{X_{ - 1}}\\
 &=& {q^{\frac{1}{2}}}{y_2}{X_0}({q^{ - \frac{1}{2}}}{y_1}{X_{ - 1}} + {X_3}) + {u_1} - {y_2}{y_1}{X_0}{X_{ - 1}}\\
 &=& q{y_2}{y_3}{X_1}{X_2} + {u_1} + {q^{\frac{1}{2}}}{y_2};
\end{eqnarray*}

when $m \geq 3$, by Lemma \ref{1}, \ref{2}, \ref{4}, \textcircled{1}  and \textcircled{2} of (i) we have
\begin{eqnarray*}
&&{X_{ - m}}{X_{m + 3}}\\
 &=& {X_{ - m}}({X_{m + 1}}{u_1} - {q^{ - \frac{5}{2}}}{y_3}{y_2}{y_1}{X_{m - 1}})\\
 &=& {q^{\frac{{5m - 3}}{4}}}{y_2}{({y_1}{y_2}{y_3})^{\frac{{m - 1}}{2}}}{X_0}{X_1}{u_1} + ({u_{m - 1}} + ({q^{\frac{3}{2}}} + {q^{\frac{5}{2}}}){u_{m - 3}}{y_1}{y_2}{y_3} +  \cdots \\
 &&+ ({q^{\frac{{3m - 3}}{4}}} + {q^{\frac{{3m + 1}}{4}}} +  \cdots  + {q^{\frac{{5m - 5}}{4}}}){u_0}{({y_1}{y_2}{y_3})^{\frac{{m - 1}}{2}}}){u_1} + {y_2}({q^{\frac{1}{2}}}{u_{m - 2}} + ({q^2} + {q^3})\\
 &&\cdot {u_{m - 4}}{y_1}{y_2}{y_3} +  \cdots  + ({q^{\frac{{3m - 7}}{4}}} + {q^{\frac{{3m - 3}}{4}}} +  \cdots  + {q^{\frac{{5m - 13}}{4}}}){u_1}{({y_1}{y_2}{y_3})^{\frac{{m - 3}}{2}}}){u_1}\\
 &&- {q^{\frac{3}{2}}}{q^{\frac{{9m - 19}}{4}}}{y_1}{y_2}{({y_1}{y_2}{y_3})^{\frac{{m - 3}}{2}}}{X_{ - 1}}{X_0}{y_1}{y_2}{y_3} - {q^{\frac{3}{2}}}({u_{m - 2}} + ({q^{\frac{3}{2}}} + {q^{\frac{5}{2}}}){u_{m - 4}}{y_1}{y_2}{y_3}\\
 &&+  \cdots  + ({q^{\frac{{3m - 9}}{4}}} + {q^{\frac{{3m - 5}}{4}}} +  \cdots  + {q^{\frac{{5m - 15}}{4}}}){u_1}{({y_1}{y_2}{y_3})^{\frac{{m - 3}}{2}}}){y_1}{y_2}{y_3} - {q^{\frac{3}{2}}}{y_2}({q^{\frac{1}{2}}}{u_{m - 3}}\\
 &&+ ({q^2} + {q^3}){u_{m - 5}}{y_1}{y_2}{y_3} +  \cdots  + ({q^{\frac{{3m - 7}}{4}}} + {q^{\frac{{3m - 3}}{4}}} +  \cdots  + {q^{\frac{{5m - 13}}{4}}}){u_0}{({y_1}{y_2}{y_3})^{\frac{{m - 3}}{2}}}){y_1}{y_2}{y_3}\\
 &=& {q^{\frac{{5m - 3}}{4}}}{y_2}{({y_1}{y_2}{y_3})^{\frac{{m - 1}}{2}}}{X_0}{X_1}{u_1} - {q^{\frac{3}{2}}}{q^{\frac{{9m - 19}}{4}}}{y_1}{y_2}{({y_1}{y_2}{y_3})^{\frac{{m - 3}}{2}}}{X_{ - 1}}{X_0}{y_1}{y_2}{y_3}\\
 &&+ ({u_m} + ({q^{\frac{3}{2}}} + {q^{\frac{5}{2}}}){u_{m - 2}}{y_1}{y_2}{y_3} +  \cdots  + ({q^{\frac{{3m - 3}}{4}}} + {q^{\frac{{3m + 1}}{4}}} +  \cdots  + {q^{\frac{{5m - 5}}{4}}}){u_1}{({y_1}{y_2}{y_3})^{\frac{{m - 1}}{2}}})\\
 &&+ {y_2}({q^{\frac{1}{2}}}{u_{m - 1}} + ({q^2} + {q^3}){u_{m - 3}}{y_1}{y_2}{y_3} +  \cdots  + ({q^{\frac{{3m - 1}}{4}}} + {q^{\frac{{3m + 3}}{4}}} +  \cdots  + {q^{\frac{{5m - 7}}{4}}}){({y_1}{y_2}{y_3})^{\frac{{m - 1}}{2}}}),
\end{eqnarray*}
thus, the proof follows that
\begin{eqnarray*}
&&{q^{\frac{{5m - 3}}{4}}}{y_2}{({y_1}{y_2}{y_3})^{\frac{{m - 1}}{2}}}{X_0}{X_1}{u_1} - {q^{\frac{3}{2}}}{q^{\frac{{9m - 19}}{4}}}{y_1}{y_2}{({y_1}{y_2}{y_3})^{\frac{{m - 3}}{2}}}{X_{ - 1}}{X_0}{y_1}{y_2}{y_3}\\
 &=& {q^{\frac{{5m - 3}}{4}}}{y_2}{({y_1}{y_2}{y_3})^{\frac{{m - 1}}{2}}}{X_0}({q^{ - \frac{1}{2}}}{y_1}{X_{ - 1}} + {X_3})\\
 &&- {q^{\frac{3}{2}}}{q^{\frac{{9m - 19}}{4}}}{y_1}{y_2}{({y_1}{y_2}{y_3})^{\frac{{m - 3}}{2}}}{X_{ - 1}}{X_0}{y_1}{y_2}{y_3}\\
 &=& {q^{\frac{{5m - 3}}{4}}}{y_2}{({y_1}{y_2}{y_3})^{\frac{{m - 1}}{2}}} + {q^{\frac{{m + 3}}{4}}}{y_2}{y_3}{({y_1}{y_2}{y_3})^{\frac{{m - 1}}{2}}}{X_1}{X_2}.
\end{eqnarray*}

Secondly, we consider the case for $n=m+1$: by Lemma \ref{1}, \ref{2} and \ref{4} we have
\begin{eqnarray*}
&&{X_{ - m}}{X_{m + {\rm{5}}}}\\
 &=& {X_{ - m}}({X_{m + {\rm{3}}}}{u_1} - {q^{ - \frac{5}{2}}}{y_3}{y_2}{y_1}{X_{m{\rm{ + }}1}})\\
 &=& {q^{\frac{{m{\rm{ + }}3}}{4}}}{y_2}{y_3}{({y_1}{y_2}{y_3})^{\frac{{m - 1}}{2}}}{X_{\rm{1}}}{X_{\rm{2}}}{u_1} + ({u_m} + ({q^{\frac{3}{2}}} + {q^{\frac{5}{2}}}){u_{m - {\rm{2}}}}{y_1}{y_2}{y_3} +  \cdots \\
 &&+ ({q^{\frac{{3m - 3}}{4}}} + {q^{\frac{{3m + 1}}{4}}} +  \cdots  + {q^{\frac{{5m - 5}}{4}}}){u_{\rm{1}}}{({y_1}{y_2}{y_3})^{\frac{{m - 1}}{2}}}){u_1} + {y_2}({q^{\frac{1}{2}}}{u_{m - {\rm{1}}}} + ({q^2} + {q^3})\\
 &&\cdot {u_{m - {\rm{3}}}}{y_1}{y_2}{y_3} +  \cdots  + ({q^{\frac{{3m - {\rm{1}}}}{4}}} + {q^{\frac{{3m{\rm{ + }}3}}{4}}} +  \cdots  + {q^{\frac{{5m - 3}}{4}}}){u_{\rm{0}}}{({y_1}{y_2}{y_3})^{\frac{{m - {\rm{1}}}}{2}}}){u_1}\\
 &&- {q^{\frac{3}{2}}}{q^{\frac{{{\rm{5}}m - {\rm{3}}}}{4}}}{y_2}{({y_1}{y_2}{y_3})^{\frac{{m - {\rm{1}}}}{2}}}{X_{\rm{0}}}{X_{\rm{1}}}{y_1}{y_2}{y_3} - {q^{\frac{3}{2}}}({u_{m - {\rm{1}}}} + ({q^{\frac{3}{2}}} + {q^{\frac{5}{2}}}){u_{m - {\rm{3}}}}{y_1}{y_2}{y_3}\\
 &&+  \cdots  + ({q^{\frac{{3m - {\rm{3}}}}{4}}} + {q^{\frac{{3m{\rm{ + 1}}}}{4}}} +  \cdots  + {q^{\frac{{5m - 5}}{4}}}){u_{\rm{0}}}{({y_1}{y_2}{y_3})^{\frac{{m - {\rm{1}}}}{2}}}){y_1}{y_2}{y_3} - {q^{\frac{3}{2}}}{y_2}({q^{\frac{1}{2}}}{u_{m - {\rm{2}}}}\\
 &&+ ({q^2} + {q^3}){u_{m - {\rm{4}}}}{y_1}{y_2}{y_3} +  \cdots  + ({q^{\frac{{3m - 7}}{4}}} + {q^{\frac{{3m - 3}}{4}}} +  \cdots  + {q^{\frac{{5m - 13}}{4}}}){u_{\rm{1}}}{({y_1}{y_2}{y_3})^{\frac{{m - 3}}{2}}}){y_1}{y_2}{y_3}\\
 &=& {q^{\frac{{m{\rm{ + }}3}}{4}}}{y_2}{y_3}{({y_1}{y_2}{y_3})^{\frac{{m - 1}}{2}}}{X_{\rm{1}}}{X_{\rm{2}}}{u_1} - {q^{\frac{3}{2}}}{q^{\frac{{{\rm{5}}m - {\rm{3}}}}{4}}}{y_2}{({y_1}{y_2}{y_3})^{\frac{{m - {\rm{1}}}}{2}}}{X_{\rm{0}}}{X_{\rm{1}}}{y_1}{y_2}{y_3}\\
 &&+ ({u_{m{\rm{ + 1}}}} + ({q^{\frac{3}{2}}} + {q^{\frac{5}{2}}}){u_{m - {\rm{1}}}}{y_1}{y_2}{y_3} +  \cdots  + ({q^{\frac{{3m{\rm{ + }}3}}{4}}} + {q^{\frac{{3m + {\rm{7}}}}{4}}} +  \cdots  + {q^{\frac{{5m{\rm{ + 1}}}}{4}}}){u_{\rm{0}}}{({y_1}{y_2}{y_3})^{\frac{{m{\rm{ + }}1}}{2}}})\\
 &&+ {y_2}({q^{\frac{1}{2}}}{u_m} + ({q^2} + {q^3}){u_{m - {\rm{2}}}}{y_1}{y_2}{y_3} +  \cdots  + ({q^{\frac{{3m - 1}}{4}}} + {q^{\frac{{3m + 3}}{4}}} +  \cdots  + {q^{\frac{{5m - {\rm{3}}}}{4}}}){u_{\rm{1}}}{({y_1}{y_2}{y_3})^{\frac{{m - 1}}{2}}}),
\end{eqnarray*}
thus, the proof follows that
\begin{eqnarray*}
&&{q^{\frac{{m{\rm{ + }}3}}{4}}}{y_2}{y_3}{({y_1}{y_2}{y_3})^{\frac{{m - 1}}{2}}}{X_{\rm{1}}}{X_{\rm{2}}}{u_1} - {q^{\frac{3}{2}}}{q^{\frac{{{\rm{5}}m - {\rm{3}}}}{4}}}{y_2}{({y_1}{y_2}{y_3})^{\frac{{m - {\rm{1}}}}{2}}}{X_{\rm{0}}}{X_{\rm{1}}}{y_1}{y_2}{y_3}\\
 &=& {q^{\frac{{m{\rm{ + }}3}}{4}}}{y_2}{y_3}{({y_1}{y_2}{y_3})^{\frac{{m - 1}}{2}}}{X_{\rm{1}}}({q^{ - {\rm{1}}}}{y_2}{y_1}{X_{\rm{0}}} + {X_{\rm{4}}})\\
 &&- {q^{\frac{3}{2}}}{q^{\frac{{{\rm{5}}m - {\rm{3}}}}{4}}}{y_2}{({y_1}{y_2}{y_3})^{\frac{{m - {\rm{1}}}}{2}}}{X_{\rm{0}}}{X_{\rm{1}}}{y_1}{y_2}{y_3}\\
 &=& {q^{\frac{{5m + 5}}{4}}}{({y_1}{y_2}{y_3})^{\frac{{m + 1}}{2}}} + {q^{\frac{{m + 3}}{4}}}{y_2}{y_3}{({y_1}{y_2}{y_3})^{\frac{{m - 1}}{2}}}{X_2}{X_3}.
\end{eqnarray*}

Now suppose that the case holds for $m+1 \leq n \leq k $. When $n=k+1$, by Lemma \ref{1}, \ref{2} and \ref{4} we have that

(a)\ if $k$ is even, then
\begin{eqnarray*}
&&{X_{ - m}}{X_{ - m + 2k + 5}}\\
 &=& {X_{ - m}}({X_{ - m + 2k + 3}}{u_1} - {q^{ - \frac{5}{2}}}{y_3}{y_2}{y_1}{X_{ - m + 2k + 1}})\\
 &=& {q^{\frac{{m + {\rm{3}}}}{4}}}{y_{\rm{2}}}{y_{\rm{3}}}{({y_1}{y_2}{y_3})^{\frac{{m - 1}}{2}}}{X_{ - m + k + 1}}{X_{ - m + k + 2}}{u_1} + ({u_k} + ({q^{\frac{3}{2}}} + {q^{\frac{5}{2}}}){u_{k - 2}}{y_1}{y_2}{y_3}\\
 &&+  \cdots  + ({q^{\frac{{3k}}{4}}} + {q^{\frac{{3k + 4}}{4}}} +  \cdots  + {q^{\frac{{5k}}{4}}}){({y_1}{y_2}{y_3})^{\frac{k}{2}}}){u_1} + {y_2}({q^{\frac{1}{2}}}{u_{k - 1}} + ({q^2} + {q^3})\\
 &&\cdot {u_{k - 3}}{y_1}{y_2}{y_3} +  \cdots  + ({q^{\frac{{3k - 4}}{4}}} + {q^{\frac{{3k}}{4}}} +  \cdots  + {q^{\frac{{5k - 8}}{4}}}){u_1}{({y_1}{y_2}{y_3})^{\frac{{k - 2}}{2}}}){u_1} - {q^{\frac{3}{2}}}{q^{\frac{{m + 3}}{4}}}\\
 &&\cdot {y_2}{y_3}{({y_1}{y_2}{y_3})^{\frac{{m - 1}}{2}}}{X_{ - m + k}}{X_{ - m + k + 1}}{y_1}{y_2}{y_3} - {q^{\frac{3}{2}}}({u_{k - 1}} + ({q^{\frac{3}{2}}} + {q^{\frac{5}{2}}}){u_{k - 3}}{y_1}{y_2}{y_3}\\
   \end{eqnarray*}

\begin{eqnarray*}
 &&+  \cdots  + ({q^{\frac{{3k - 6}}{4}}} + {q^{\frac{{3k - 2}}{4}}} +  \cdots  + {q^{\frac{{5k - 10}}{4}}}){u_1}{({y_1}{y_2}{y_3})^{\frac{{k - 2}}{2}}}){y_1}{y_2}{y_3} - {q^{\frac{3}{2}}}{y_2}({q^{\frac{1}{2}}}{u_{k - 2}}\\
 &&+ ({q^2} + {q^3}){u_{k - 4}}{y_1}{y_2}{y_3} +  \cdots  + ({q^{\frac{{3k - 4}}{4}}} + {q^{\frac{{3k}}{4}}} +  \cdots  + {q^{\frac{{5k - 8}}{4}}}){({y_1}{y_2}{y_3})^{\frac{{k - 2}}{2}}}){y_1}{y_2}{y_3}\\
 &=& {q^{\frac{{m + {\rm{3}}}}{4}}}{y_{\rm{2}}}{y_{\rm{3}}}{({y_1}{y_2}{y_3})^{\frac{{m - 1}}{2}}}{X_{ - m + k + 1}}{X_{ - m + k + 2}}{u_1} - {q^{\frac{3}{2}}}{q^{\frac{{m + 3}}{4}}}{y_2}{y_3}{({y_1}{y_2}{y_3})^{\frac{{m - 1}}{2}}}\\
 &&\cdot {X_{ - m + k}}{X_{ - m + k + 1}}{y_1}{y_2}{y_3} + ({u_{k + 1}} + ({q^{\frac{3}{2}}} + {q^{\frac{5}{2}}}){u_{k - 1}}{y_1}{y_2}{y_3} +  \cdots \\
 &&+ ({q^{\frac{{3k}}{4}}} + {q^{\frac{{3k + 4}}{4}}} +  \cdots  + {q^{\frac{{5k}}{4}}}){u_1}{({y_1}{y_2}{y_3})^{\frac{k}{2}}}) + {y_2}({q^{\frac{1}{2}}}{u_k} + ({q^2} + {q^3})\\
 &&\cdot {u_{k - 2}}{y_1}{y_2}{y_3} +  \cdots  + ({q^{\frac{{3k + 2}}{4}}} + {q^{\frac{{3k + 6}}{4}}} +  \cdots  + {q^{\frac{{5k - 2}}{4}}}){({y_1}{y_2}{y_3})^{\frac{k}{2}}}),
\end{eqnarray*}
thus, the proof follows that
\begin{eqnarray*}
&&{q^{\frac{{m + {\rm{3}}}}{4}}}{y_{\rm{2}}}{y_{\rm{3}}}{({y_1}{y_2}{y_3})^{\frac{{m - 1}}{2}}}{X_{ - m + k + 1}}{X_{ - m + k + 2}}{u_1}\\
 &&- {q^{\frac{3}{2}}}{q^{\frac{{m + 3}}{4}}}{y_2}{y_3}{({y_1}{y_2}{y_3})^{\frac{{m - 1}}{2}}}{X_{ - m + k}}{X_{ - m + k + 1}}{y_1}{y_2}{y_3}\\
 &=& {q^{\frac{{m + {\rm{3}}}}{4}}}{y_{\rm{2}}}{y_{\rm{3}}}{({y_1}{y_2}{y_3})^{\frac{{m - 1}}{2}}}{X_{ - m + k + 1}}({q^{ - \frac{5}{2}}}{y_3}{y_2}{y_1}{X_{ - m + k}} + {X_{ - m + k + 4}})\\
 &&- {q^{\frac{3}{2}}}{q^{\frac{{m + 3}}{4}}}{y_2}{y_3}{({y_1}{y_2}{y_3})^{\frac{{m - 1}}{2}}}{X_{ - m + k}}{X_{ - m + k + 1}}{y_1}{y_2}{y_3}\\
 &=& {q^{\frac{{m + {\rm{3}}}}{4}}}{y_{\rm{2}}}{y_{\rm{3}}}{({y_1}{y_2}{y_3})^{\frac{{m - 1}}{2}}}{X_{ - m + k + 2}}{X_{ - m + k + 3}} + {q^{\frac{{5k + 2}}{4}}}{y_2}{({y_1}{y_2}{y_3})^{\frac{k}{2}}};
\end{eqnarray*}

(b)\ if $k$ is odd, the proof is similar.

(ii)\ \textcircled{1}\ When $n=\frac{{m + 1}}{2}$ and $m \geq 3$, by Lemma \ref{8} the case holds.
When $n=\frac{{m + 1}}{2} + 1$ and $m \geq 5$, by Lemma \ref{1}, \ref{2} and \ref{8} we have that

(a)\ if $4 \mid m-1$, then
\begin{eqnarray*}
&&{X_{ - m}}{X_3}\\
 &=& {X_{ - m}}({X_2}w - {q^{ - \frac{1}{2}}}{X_1}{y_2})\\
 &=& {q^{\frac{{m + 1}}{2}}}{y_1}{y_2}{X_{\frac{{1 - m}}{2}}}{X_{\frac{{3 - m}}{2}}}w + ({u_{\frac{{m - 1}}{2}}} + ({q^{\frac{3}{2}}} + {q^{\frac{5}{2}}}){u_{\frac{{m - 5}}{2}}}{y_1}{y_2}{y_3} +  \cdots \\
 &&+ ({q^{\frac{{3m - 3}}{8}}} + {q^{\frac{{3m + 5}}{8}}} +  \cdots  + {q^{\frac{{5m - 5}}{8}}})u_0^{}{({y_1}{y_2}{y_3})^{\frac{{m - 1}}{4}}})w + {y_2}({q^{\frac{1}{2}}}{u_{\frac{{m - 3}}{2}}}\\
 &&+ ({q^2} + {q^3}){u_{\frac{{m - 7}}{2}}}{y_1}{y_2}{y_3} +  \cdots  + ({q^{\frac{{3m - 11}}{8}}} + {q^{\frac{{3m - 3}}{8}}} +  \cdots  + {q^{\frac{{5m - 21}}{8}}})\\
 &&\cdot {u_1}{({y_1}{y_2}{y_3})^{\frac{{m - 5}}{4}}})w - {q^{\frac{1}{2}}}{y_2}{q^{\frac{{m - 2}}{2}}}{y_1}{X_{\frac{{ - 1 - m}}{2}}}{X_{\frac{{3 - m}}{2}}} - {q^{\frac{1}{2}}}{y_2}w({u_{\frac{{m - 3}}{2}}} + ({q^{\frac{3}{2}}}\\
 &&+ {q^{\frac{5}{2}}}){u_{\frac{{m - 7}}{2}}}{y_1}{y_2}{y_3} +  \cdots  + ({q^{\frac{{3m - 15}}{8}}} + {q^{\frac{{3m - 7}}{8}}} +  \cdots  + {q^{\frac{{5m - 25}}{8}}}){u_1}{({y_1}{y_2}{y_3})^{\frac{{m - 5}}{4}}})\\
 &=& {q^{\frac{{m + 1}}{2}}}{y_1}{y_2}{X_{\frac{{1 - m}}{2}}}{X_{\frac{{3 - m}}{2}}}w - {q^{\frac{{m + 1}}{2}}}{y_1}{y_2}{X_{\frac{{ - 1 - m}}{2}}}{X_{\frac{{3 - m}}{2}}} + w({u_{\frac{{m - 1}}{2}}} + ({q^{\frac{3}{2}}} + {q^{\frac{5}{2}}})\\
 &&\cdot {u_{\frac{{m - 5}}{2}}}{y_1}{y_2}{y_3} +  \cdots  + ({q^{\frac{{3m - 3}}{8}}} + {q^{\frac{{3m + 5}}{8}}} +  \cdots  + {q^{\frac{{5m - 5}}{8}}}){({y_1}{y_2}{y_3})^{\frac{{m - 1}}{4}}}),
\end{eqnarray*}
thus, the proof follows that
\begin{eqnarray*}
&&{q^{\frac{{m + 1}}{2}}}{y_1}{y_2}{X_{\frac{{1 - m}}{2}}}{X_{\frac{{3 - m}}{2}}}w - {q^{\frac{{m + 1}}{2}}}{y_1}{y_2}{X_{\frac{{ - 1 - m}}{2}}}{X_{\frac{{3 - m}}{2}}}\\
 &=& {q^{\frac{{m + 1}}{2}}}{y_1}{y_2}{X_{\frac{{3 - m}}{2}}}(q{X_{\frac{{3 - m}}{2}}}{y_1}{y_3} + {X_{\frac{{ - 1 - m}}{2}}}) - {q^{\frac{{m + 1}}{2}}}{y_1}{y_2}{X_{\frac{{ - 1 - m}}{2}}}{X_{\frac{{3 - m}}{2}}}\\
 &=& {q^{\frac{{m + 5}}{2}}}{y_1}({y_1}{y_2}{y_3}){X_{\frac{{3 - m}}{2}}^{2}};
\end{eqnarray*}

(b)\ if $4 \nmid m-1$, the proof is similar.

For $m \geq 7$, we suppose that the case holds for $\frac{{m + 1}}{2}+1 \leq n \leq k < m-1 $. When $n=k+1$, by Lemma \ref{1}, \ref{2} and \ref{4} we have that

(a)\ if $k$ is odd, then
\begin{eqnarray*}
&&{X_{ - m}}{X_{ - m + 2k + 2}}\\
 &=& {X_{ - m}}({X_{ - m + 2k}}{u_1} - {q^{ - \frac{5}{2}}}{y_3}{y_2}{y_1}{X_{ - m + 2k - 2}})\\
 &=& {q^{\frac{{14k - 5m - 11}}{4}}}{y_1}{({y_1}{y_2}{y_3})^{\frac{{2k - m - 1}}{2}}}{X_{ - m + k - 1}}{X_{ - m + k + 1}}{u_1} + w({u_{k - 2}} + ({q^{\frac{3}{2}}} + {q^{\frac{5}{2}}})\\
 &&\cdot {u_{k - 4}}{y_1}{y_2}{y_3} +  \cdots  + ({q^{\frac{{3k - 9}}{4}}} + {q^{\frac{{3k - 5}}{4}}} +  \cdots  + {q^{\frac{{5k - 15}}{4}}}){u_1}{({y_1}{y_2}{y_3})^{\frac{{k - 3}}{2}}}){u_1}\\
 &&- {q^{\frac{{14k - 5m - 25}}{4}}}{y_1}{({y_1}{y_2}{y_3})^{\frac{{2k - m - 3}}{2}}}{X_{ - m + k - 1}}{X_{ - m + k - 1}}{q^{\frac{3}{2}}}{y_1}{y_2}{y_3} - w({u_{k - 3}} + ({q^{\frac{3}{2}}} + {q^{\frac{5}{2}}})\\
 &&\cdot {u_{k - 5}}{y_1}{y_2}{y_3} +  \cdots  + ({q^{\frac{{3k - 9}}{4}}} + {q^{\frac{{3k - 5}}{4}}} +  \cdots  + {q^{\frac{{5k - 15}}{4}}}){u_0}{({y_1}{y_2}{y_3})^{\frac{{k - 3}}{2}}}){q^{\frac{3}{2}}}{y_1}{y_2}{y_3}\\
 &=& {q^{\frac{{14k - 5m - 11}}{4}}}{y_1}{({y_1}{y_2}{y_3})^{\frac{{2k - m - 1}}{2}}}{X_{ - m + k - 1}}{X_{ - m + k + 1}}{u_1} - {q^{\frac{{14k - 5m - 25}}{4}}}{y_1}{({y_1}{y_2}{y_3})^{\frac{{2k - m - 3}}{2}}}\\
 &&\cdot {X_{ - m + k - 1}}{X_{ - m + k - 1}}{q^{\frac{3}{2}}}{y_1}{y_2}{y_3} + w({u_{k - 1}} + ({q^{\frac{3}{2}}} + {q^{\frac{5}{2}}}){u_{k - 3}}{y_1}{y_2}{y_3}\\
 &&+  \cdots  + ({q^{\frac{{3k - 3}}{4}}} + {q^{\frac{{3k + 1}}{4}}} +  \cdots  + {q^{\frac{{5k - 9}}{4}}}){u_0}{({y_1}{y_2}{y_3})^{\frac{{k - 1}}{2}}}),
\end{eqnarray*}
thus, the proof follows that
\begin{eqnarray*}
&&{q^{\frac{{14k - 5m - 11}}{4}}}{y_1}{({y_1}{y_2}{y_3})^{\frac{{2k - m - 1}}{2}}}{X_{ - m + k - 1}}{X_{ - m + k + 1}}{u_1}\\
 &&- {q^{\frac{{14k - 5m - 25}}{4}}}{y_1}{({y_1}{y_2}{y_3})^{\frac{{2k - m - 3}}{2}}}{X_{ - m + k - 1}}{X_{ - m + k - 1}}{q^{\frac{3}{2}}}{y_1}{y_2}{y_3}\\
 &=& {q^{\frac{{14k - 5m - 11}}{4}}}{y_1}{({y_1}{y_2}{y_3})^{\frac{{2k - m - 1}}{2}}}{X_{ - m + k + 1}}({q^{ - \frac{1}{2}}}{y_3}{y_2}{y_1}{X_{ - m + k + 1}} + {X_{ - m + k - 3}})\\
 &&- {q^{\frac{{14k - 5m - 25}}{4}}}{y_1}{({y_1}{y_2}{y_3})^{\frac{{2k - m - 3}}{2}}}{X_{ - m + k - 1}}{X_{ - m + k - 1}}{q^{\frac{3}{2}}}{y_1}{y_2}{y_3}\\
 &=& {q^{\frac{{14k - 5m + 3}}{4}}}{y_1}{({y_1}{y_2}{y_3})^{\frac{{2k - m + 1}}{2}}}{X_{ - m + k + 1}^{2}} + w{q^{\frac{{5k - 5}}{4}}}{({y_1}{y_2}{y_3})^{\frac{{k - 1}}{2}}};
\end{eqnarray*}

(b)\ if $k$ is even, the proof is similar.

\textcircled{2}\ It is obvious for $m=1$. When $m = 3$, by Lemma \ref{1} and \ref{4} we have
\begin{eqnarray*}
{X_{ - 3}}{X_3}
 &=& {X_{ - 3}}({X_1}{u_1} - {q^{ - \frac{1}{2}}}{y_1}{X_{ - 1}})\\
 &=& ({q^{\frac{1}{2}}}X_{ - 1}^2{y_1} + w){u_1} - {q^{ - \frac{1}{2}}}{X_{ - 3}}{X_{ - 1}}{y_1}\\
 &=& {q^{\frac{1}{2}}}{y_1}{X_{ - 1}}({y_3}{y_2}{X_1} + {X_{ - 3}}) + w{u_1} - {q^{ - \frac{1}{2}}}{X_{ - 3}}{X_{ - 1}}{y_1}\\
 &=& {q^{\frac{5}{2}}}{y_1}{y_2}{y_3}{X_{ - 1}}{X_1} + w{u_1}.
\end{eqnarray*}

When $m \geq 5$, by Lemma \ref{1}, \ref{2}, \ref{4} and \textcircled{1} of (ii) we have
\begin{eqnarray*}
&&{X_{ - m}}{X_m}\\
 &=& {X_{ - m}}({X_{m - 2}}{u_1} - {q^{ - \frac{5}{2}}}{y_3}{y_2}{y_1}{X_{m - 4}})\\
 &=& {q^{\frac{{9m - 25}}{4}}}{y_1}{({y_1}{y_2}{y_3})^{\frac{{m - 3}}{2}}}{X_{ - 1}}{X_{ - 1}}{u_1} + w({u_{m - 3}} + ({q^{\frac{3}{2}}} + {q^{\frac{5}{2}}}){u_{m - 5}}{y_1}{y_2}{y_3}\\
 &&+  \cdots  + ({q^{\frac{{3m - 9}}{4}}} + {q^{\frac{{3m - 5}}{4}}} +  \cdots  + {q^{\frac{{5m - 15}}{4}}}){u_0}{({y_1}{y_2}{y_3})^{\frac{{m - 3}}{2}}}){u_1} - {q^{\frac{3}{2}}}{q^{\frac{{9m - 39}}{4}}}{y_1}\\
 &&\cdot {({y_1}{y_2}{y_3})^{\frac{{m - 5}}{2}}}{X_{ - 3}}{X_{ - 1}}{y_1}{y_2}{y_3} - {q^{\frac{3}{2}}}w({u_{m - 4}} + ({q^{\frac{3}{2}}} + {q^{\frac{5}{2}}}){u_{m - 6}}{y_1}{y_2}{y_3}\\
 &&+  \cdots  + ({q^{\frac{{3m - 15}}{4}}} + {q^{\frac{{3m - 11}}{4}}} +  \cdots  + {q^{\frac{{5m - 25}}{4}}}){u_1}{({y_1}{y_2}{y_3})^{\frac{{m - 5}}{2}}}){y_1}{y_2}{y_3}\\
   \end{eqnarray*}

\begin{eqnarray*}
 &=& {q^{\frac{{9m - 25}}{4}}}{y_1}{({y_1}{y_2}{y_3})^{\frac{{m - 3}}{2}}}{X_{ - 1}}{X_{ - 1}}{u_1} - {q^{\frac{3}{2}}}{q^{\frac{{9m - 39}}{4}}}{y_1}{({y_1}{y_2}{y_3})^{\frac{{m - 5}}{2}}}\\
 &&\cdot {X_{ - 3}}{X_{ - 1}}{y_1}{y_2}{y_3} + w({u_{m - 2}} + ({q^{\frac{3}{2}}} + {q^{\frac{5}{2}}}){u_{m - 4}}{y_1}{y_2}{y_3} +  \cdots \\
 &&+ ({q^{\frac{{3m - 9}}{4}}} + {q^{\frac{{3m - 5}}{4}}} +  \cdots  + {q^{\frac{{5m - 15}}{4}}}){u_1}{({y_1}{y_2}{y_3})^{\frac{{m - 3}}{2}}}),
\end{eqnarray*}
thus, the proof follows that
\begin{eqnarray*}
&&{q^{\frac{{9m - 25}}{4}}}{y_1}{({y_1}{y_2}{y_3})^{\frac{{m - 3}}{2}}}{X_{ - 1}}{X_{ - 1}}{u_1} - {q^{\frac{3}{2}}}{q^{\frac{{9m - 39}}{4}}}{y_1}{({y_1}{y_2}{y_3})^{\frac{{m - 5}}{2}}}{X_{ - 3}}{X_{ - 1}}{y_1}{y_2}{y_3}\\
 &=& {q^{\frac{{9m - 25}}{4}}}{y_1}{({y_1}{y_2}{y_3})^{\frac{{m - 3}}{2}}}{X_{ - 1}}({y_3}{y_2}{X_1} + {X_{ - 3}})\\
 &&- {q^{\frac{3}{2}}}{q^{\frac{{9m - 39}}{4}}}{y_1}{({y_1}{y_2}{y_3})^{\frac{{m - 5}}{2}}}{X_{ - 3}}{X_{ - 1}}{y_1}{y_2}{y_3}\\
 &=& {q^{\frac{{5m - 5}}{4}}}{({y_1}{y_2}{y_3})^{\frac{{m - 1}}{2}}}{X_{ - 1}}{X_1}.
\end{eqnarray*}

\textcircled{3}\ Firstly, we consider the case for $n=m+1$: when $m = 1$, by Lemma \ref{1} the case holds; when $m \geq 3$, by Lemma \ref{1}, \ref{2}, \ref{4}, \textcircled{1}  and \textcircled{2} of (ii) we have
\begin{eqnarray*}
&&{X_{ - m}}{X_{m + 2}}\\
 &=& {X_{ - m}}({X_m}{u_1} - {q^{ - \frac{5}{2}}}{y_3}{y_2}{y_1}{X_{m - 2}})\\
 &=& {q^{\frac{{5m - 5}}{4}}}{({y_1}{y_2}{y_3})^{\frac{{m - 1}}{2}}}{X_{ - 1}}{X_1}{u_1} + w({u_{m - 2}} + ({q^{\frac{3}{2}}} + {q^{\frac{5}{2}}}){u_{m - 4}}{y_1}{y_2}{y_3}\\
 &&+  \cdots  + ({q^{\frac{{3m - 9}}{4}}} + {q^{\frac{{3m - 5}}{4}}} +  \cdots  + {q^{\frac{{5m - 15}}{4}}}){u_1}{({y_1}{y_2}{y_3})^{\frac{{m - 3}}{2}}}){u_1} - {q^{\frac{3}{2}}}{q^{\frac{{9m - 25}}{4}}}\\
 &&\cdot {y_1}{({y_1}{y_2}{y_3})^{\frac{{m - 3}}{2}}}{X_{ - 1}}{X_{ - 1}}{y_1}{y_2}{y_3} - {q^{\frac{3}{2}}}w({u_{m - 3}} + ({q^{\frac{3}{2}}} + {q^{\frac{5}{2}}}){u_{m - 5}}{y_1}{y_2}{y_3}\\
 &&+  \cdots  + ({q^{\frac{{3m - 9}}{4}}} + {q^{\frac{{3m - 5}}{4}}} +  \cdots  + {q^{\frac{{5m - 15}}{4}}}){u_0}{({y_1}{y_2}{y_3})^{\frac{{m - 3}}{2}}}){y_1}{y_2}{y_3}\\
 &=& {q^{\frac{{5m - 5}}{4}}}{({y_1}{y_2}{y_3})^{\frac{{m - 1}}{2}}}{X_{ - 1}}{X_1}{u_1} - {q^{\frac{3}{2}}}{q^{\frac{{9m - 25}}{4}}}{y_1}{({y_1}{y_2}{y_3})^{\frac{{m - 3}}{2}}}{X_{ - 1}}{X_{ - 1}}{y_1}{y_2}{y_3}\\
 &&+ w({u_{m - 1}} + ({q^{\frac{3}{2}}} + {q^{\frac{5}{2}}}){u_{m - 3}}{y_1}{y_2}{y_3} +  \cdots  + ({q^{\frac{{3m - 3}}{4}}} + {q^{\frac{{3m + 1}}{4}}} +  \cdots  + {q^{\frac{{5m - 9}}{4}}}){({y_1}{y_2}{y_3})^{\frac{{m - 1}}{2}}}),
\end{eqnarray*}
thus, the proof follows that
\begin{eqnarray*}
&&{q^{\frac{{5m - 5}}{4}}}{({y_1}{y_2}{y_3})^{\frac{{m - 1}}{2}}}{X_{ - 1}}{X_1}{u_1} - {q^{\frac{3}{2}}}{q^{\frac{{9m - 25}}{4}}}{y_1}{({y_1}{y_2}{y_3})^{\frac{{m - 3}}{2}}}{X_{ - 1}}{X_{ - 1}}{y_1}{y_2}{y_3}\\
 &=& {q^{\frac{{5m - 5}}{4}}}{({y_1}{y_2}{y_3})^{\frac{{m - 1}}{2}}}{X_{ - 1}}({q^{ - \frac{1}{2}}}{y_1}{X_{ - 1}} + {X_3})\\
 &&- {q^{\frac{3}{2}}}{q^{\frac{{9m - 25}}{4}}}{y_1}{({y_1}{y_2}{y_3})^{\frac{{m - 3}}{2}}}{X_{ - 1}}{X_{ - 1}}{y_1}{y_2}{y_3}\\
 &=& {q^{\frac{{5m - 5}}{4}}}w{({y_1}{y_2}{y_3})^{\frac{{m - 1}}{2}}} + {q^{\frac{{m + 3}}{4}}}{y_2}{y_3}{({y_1}{y_2}{y_3})^{\frac{{m - 1}}{2}}}{X_1^2}.
\end{eqnarray*}

Secondly, we consider the case for $n=m+2$: when $m = 1$, by Lemma \ref{1} and \ref{4} we have
\begin{eqnarray*}
{X_{ - 1}}{X_5}
 &=& {X_{ - 1}}({X_3}{u_1} - {q^{ - \frac{5}{2}}}{y_3}{y_2}{y_1}{X_1})\\
 &=& (q{y_2}{y_3}{X_1}{X_1} + w){u_1} - {q^{\frac{3}{2}}}{X_{ - 1}}{X_1}{y_1}{y_2}{y_3}\\
 &=& q{y_2}{y_3}{X_1}({q^{ - \frac{1}{2}}}{y_1}{X_{ - 1}} + {X_3}) + w{u_1} - {q^{\frac{3}{2}}}{X_{ - 1}}{X_1}{y_1}{y_2}{y_3}\\
 &=& q{y_2}{y_3}{X_1}{X_3} + w{u_1};
\end{eqnarray*}

when $m\geq3$, by Lemma \ref{1}, \ref{2}, \ref{4}, \textcircled{2} of (ii) we have
\begin{eqnarray*}
&&{X_{ - m}}{X_{m + 4}}\\
 &=& {X_{ - m}}({X_{m + 2}}{u_1} - {q^{ - \frac{5}{2}}}{y_3}{y_2}{y_1}{X_m})\\
 &=& {q^{\frac{{m{\rm{ + }}3}}{4}}}{y_2}{y_3}{({y_1}{y_2}{y_3})^{\frac{{m - 1}}{2}}}{X_{\rm{1}}}{X_1}{u_1} + w({u_{m - 1}} + ({q^{\frac{3}{2}}} + {q^{\frac{5}{2}}}){u_{m - 3}}{y_1}{y_2}{y_3} +  \cdots \\
 &&+ ({q^{\frac{{3m - 3}}{4}}} + {q^{\frac{{3m + 1}}{4}}} +  \cdots  + {q^{\frac{{5m - 5}}{4}}}){({y_1}{y_2}{y_3})^{\frac{{m - 1}}{2}}}){u_1} - {q^{\frac{3}{2}}}{q^{\frac{{{\rm{5}}m - 5}}{4}}}{({y_1}{y_2}{y_3})^{\frac{{m - {\rm{1}}}}{2}}}\\
 &&\cdot {X_{ - 1}}{X_{\rm{1}}}{y_1}{y_2}{y_3} - {q^{\frac{3}{2}}}w({u_{m - 2}} + ({q^{\frac{3}{2}}} + {q^{\frac{5}{2}}}){u_{m - 4}}{y_1}{y_2}{y_3} +  \cdots \\
 &&+ ({q^{\frac{{3m - 9}}{4}}} + {q^{\frac{{3m - 5}}{4}}} +  \cdots  + {q^{\frac{{5m - 15}}{4}}}){u_1}{({y_1}{y_2}{y_3})^{\frac{{m - 3}}{2}}}){y_1}{y_2}{y_3}\\
 &=& {q^{\frac{{m{\rm{ + }}3}}{4}}}{y_2}{y_3}{({y_1}{y_2}{y_3})^{\frac{{m - 1}}{2}}}{X_{\rm{1}}}{X_1}{u_1} - {q^{\frac{3}{2}}}{q^{\frac{{{\rm{5}}m - 5}}{4}}}{({y_1}{y_2}{y_3})^{\frac{{m - {\rm{1}}}}{2}}}{X_{ - 1}}{X_{\rm{1}}}{y_1}{y_2}{y_3} + w({u_m}\\
 &&+ ({q^{\frac{3}{2}}} + {q^{\frac{5}{2}}}){u_{m - 2}}{y_1}{y_2}{y_3} +  \cdots  + ({q^{\frac{{3m - 3}}{4}}} + {q^{\frac{{3m + 1}}{4}}} +  \cdots  + {q^{\frac{{5m - 5}}{4}}}){u_1}{({y_1}{y_2}{y_3})^{\frac{{m - 1}}{2}}}),
\end{eqnarray*}
thus, the proof follows that
\begin{eqnarray*}
&&{q^{\frac{{m{\rm{ + }}3}}{4}}}{y_2}{y_3}{({y_1}{y_2}{y_3})^{\frac{{m - 1}}{2}}}{X_{\rm{1}}}{X_1}{u_1} - {q^{\frac{3}{2}}}{q^{\frac{{{\rm{5}}m - 5}}{4}}}{({y_1}{y_2}{y_3})^{\frac{{m - {\rm{1}}}}{2}}}{X_{ - 1}}{X_{\rm{1}}}{y_1}{y_2}{y_3}\\
 &=& {q^{\frac{{m{\rm{ + }}3}}{4}}}{y_2}{y_3}{({y_1}{y_2}{y_3})^{\frac{{m - 1}}{2}}}{X_{\rm{1}}}({q^{ - \frac{1}{2}}}{y_1}{X_{ - 1}} + {X_3})\\
 &&- {q^{\frac{3}{2}}}{q^{\frac{{{\rm{5}}m - 5}}{4}}}{({y_1}{y_2}{y_3})^{\frac{{m - {\rm{1}}}}{2}}}{X_{ - 1}}{X_{\rm{1}}}{y_1}{y_2}{y_3}\\
 &=& {q^{\frac{{m{\rm{ + }}3}}{4}}}{y_2}{y_3}{({y_1}{y_2}{y_3})^{\frac{{m - 1}}{2}}}{X_{\rm{1}}}{X_3}.
\end{eqnarray*}

Now suppose the case holds for $m+2 \leq n \leq k $. When $n=k+1$, by Lemma \ref{1}, \ref{2} and \ref{4} we have that

(a)\ if $k$ is odd, then
\begin{eqnarray*}
&&{X_{ - m}}{X_{ - m + 2k + 2}}\\
 &=& {X_{ - m}}({X_{ - m + 2k}}{u_1} - {q^{ - \frac{5}{2}}}{y_3}{y_2}{y_1}{X_{ - m + 2k - 2}})\\
 &=& {q^{\frac{{m + {\rm{3}}}}{4}}}{y_{\rm{2}}}{y_{\rm{3}}}{({y_1}{y_2}{y_3})^{\frac{{m - 1}}{2}}}{X_{ - m + k - 1}}{X_{ - m + k + 1}}{u_1} + w({u_{k - 2}} + ({q^{\frac{3}{2}}} + {q^{\frac{5}{2}}})\\
 &&\cdot {u_{k - 4}}{y_1}{y_2}{y_3} +  \cdots  + ({q^{\frac{{3k - 9}}{4}}} + {q^{\frac{{3k - 5}}{4}}} +  \cdots  + {q^{\frac{{5k - 15}}{4}}}){u_1}{({y_1}{y_2}{y_3})^{\frac{{k - 3}}{2}}}){u_1}\\
 &&- {q^{\frac{{m + 3}}{4}}}{y_2}{y_3}{({y_1}{y_2}{y_3})^{\frac{{m - 1}}{2}}}{X_{ - m + k - 1}}{X_{ - m + k - 1}}{q^{\frac{3}{2}}}{y_1}{y_2}{y_3} - w({u_{k - 3}} + ({q^{\frac{3}{2}}} + {q^{\frac{5}{2}}})\\
 &&\cdot {u_{k - 5}}{y_1}{y_2}{y_3} +  \cdots  + ({q^{\frac{{3k - 9}}{4}}} + {q^{\frac{{3k - 5}}{4}}} +  \cdots  + {q^{\frac{{5k - 15}}{4}}}){u_0}{({y_1}{y_2}{y_3})^{\frac{{k - 3}}{2}}}){q^{\frac{3}{2}}}{y_1}{y_2}{y_3}\\
 &=& {q^{\frac{{m + {\rm{3}}}}{4}}}{y_{\rm{2}}}{y_{\rm{3}}}{({y_1}{y_2}{y_3})^{\frac{{m - 1}}{2}}}{X_{ - m + k - 1}}{X_{ - m + k + 1}}{u_1} - {q^{\frac{{m + 3}}{4}}}{y_2}{y_3}{({y_1}{y_2}{y_3})^{\frac{{m - 1}}{2}}}\\
 &&\cdot {X_{ - m + k - 1}}{X_{ - m + k - 1}}{q^{\frac{3}{2}}}{y_1}{y_2}{y_3} + w({u_{k - 1}} + ({q^{\frac{3}{2}}} + {q^{\frac{5}{2}}}){u_{k - 3}}{y_1}{y_2}{y_3}\\
 &&+  \cdots  + ({q^{\frac{{3k - 3}}{4}}} + {q^{\frac{{3k + 1}}{4}}} +  \cdots  + {q^{\frac{{5k - 9}}{4}}}){u_0}{({y_1}{y_2}{y_3})^{\frac{{k - 1}}{2}}}),
\end{eqnarray*}
thus, the proof follows that
\begin{eqnarray*}
&&{q^{\frac{{m + {\rm{3}}}}{4}}}{y_{\rm{2}}}{y_{\rm{3}}}{({y_1}{y_2}{y_3})^{\frac{{m - 1}}{2}}}{X_{ - m + k - 1}}{X_{ - m + k + 1}}{u_1}\\
 &&- {q^{\frac{{m + 3}}{4}}}{y_2}{y_3}{({y_1}{y_2}{y_3})^{\frac{{m - 1}}{2}}}{X_{ - m + k - 1}}{X_{ - m + k - 1}}{q^{\frac{3}{2}}}{y_1}{y_2}{y_3}\\
 &=& {q^{\frac{{m + {\rm{3}}}}{4}}}{y_{\rm{2}}}{y_{\rm{3}}}{({y_1}{y_2}{y_3})^{\frac{{m - 1}}{2}}}{X_{ - m + k - 1}}({q^{ - \frac{5}{2}}}{y_3}{y_2}{y_1}{X_{ - m + k - 1}} + {X_{ - m + k + 3}})\\
 &&- {q^{\frac{{m + 3}}{4}}}{y_2}{y_3}{({y_1}{y_2}{y_3})^{\frac{{m - 1}}{2}}}{X_{ - m + k - 1}}{X_{ - m + k - 1}}{q^{\frac{3}{2}}}{y_1}{y_2}{y_3}\\
 &=& {q^{\frac{{m + {\rm{3}}}}{4}}}{y_{\rm{2}}}{y_{\rm{3}}}{({y_1}{y_2}{y_3})^{\frac{{m - 1}}{2}}}{X_{ - m + k + 1}^2} + w{q^{\frac{{5k - 5}}{4}}}{({y_1}{y_2}{y_3})^{\frac{{k - 1}}{2}}};
\end{eqnarray*}

(b)\ if $k$ is even, the proof is similar.
\end{proof}
\begin{remark}\label{opp}
By using bar-involution on these cluster multiplication formulas, one can
obtain the cluster multiplication formulas in the other direction.
\end{remark}

\section{A bar-invariant positive $\mathbb{ZP}$-basis}
In this section, by using the cluster multiplication formulas  established in Section 3, we  construct an explicit bar-invariant positive $\mathbb{ZP}$-basis of $\mathcal{A}_{q} (Q)$, which can be viewed  as a quantum analogue of the atomic basis in \cite{CI-1}.

\begin{definition}
An element in $\mathcal{A}_{q} (Q)$ is called positive if the coefficients of its Laurent expansion associated with any cluster belong to
$\mathbb{Z}_{\geq 0}\mathbb{P}$.
\end{definition}

\begin{definition}
A $\mathbb{ZP}$-basis of $\mathcal{A}_{q} (Q)$ is called  positive if its
structure constants belong to $\mathbb{Z}_{\geq 0}\mathbb{P}$.
\end{definition}

Denote by
$$\mathcal{B}=\{ \text{quantum cluster monomials} \} \sqcup \{ u_{n}\omega^{k}, u_{n}z^{k}| n\geq 1, k\geq 0\}. $$
It follows that $\mathcal{B}\subset \mathcal{A}_{q} (Q)$.

\begin{theorem}\label{positive}
The set $\mathcal{B}$ is a bar-invariant positive  $\mathbb{ZP}$-basis of $\mathcal{A}_{q} (Q)$.
\end{theorem}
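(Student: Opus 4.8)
The plan is to establish the three defining properties of $\mathcal{B}$ in turn: bar-invariance of each element, the fact that $\mathcal{B}$ is a $\mathbb{ZP}$-basis (spanning together with linear independence), and positivity of its structure constants. Bar-invariance is immediate. Quantum cluster monomials are bar-invariant by the general fact recalled in Section~2, and in particular $\overline{w}=w$ and $\overline{z}=z$ since $w,z$ are themselves quantum cluster variables. For the remaining generators one computes $\overline{u_n w^k}=\overline{w^k}\,\overline{u_n}=w^k u_n$ from $\overline{fg}=\overline{g}\,\overline{f}$ and Lemma~\ref{0}(1), and this equals $u_n w^k$ by the commutation relation Lemma~\ref{regular}(1); the case $u_n z^k$ is identical using Lemma~\ref{regular}(2). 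Hence every element of $\mathcal{B}$ is bar-invariant.

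The core of the argument is to show that $\mathcal{B}$ is closed under multiplication with coefficients in $\mathbb{Z}_{\geq 0}\mathbb{P}$, which simultaneously yields positivity of the structure constants and, since $\mathcal{A}_q(Q)$ is generated by the quantum cluster variables $\{X_n\}_{n\in\mathbb{Z}}\cup\{w,z\}\subset\mathcal{B}$, yields that $\mathcal{B}$ spans $\mathcal{A}_q(Q)$ over $\mathbb{ZP}$. I would reduce every product $b\,b'$ with $b,b'\in\mathcal{B}$ to the formulas of Section~3: products of two cluster variables are given by Lemma~\ref{1} and Theorems~\ref{7} and~\ref{9} together with Lemma~\ref{8} (and Remark~\ref{opp} for the reversed direction), products of $u_n$ with a cluster variable by Theorem~\ref{5}, products $u_n u_p$ by Lemma~\ref{3}, and the central relations $u_n w=w u_n$, $u_n z=z u_n$ by Lemma~\ref{regular}. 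In each of these formulas the scalar coefficients are products of powers of $q^{\pm\frac{1}{2}}$, Laurent monomials in the $y_i$, and the manifestly non-negative sums $\sum_{i=0}^{l}q^{(\cdots)/2}$, so they lie in $\mathbb{Z}_{\geq 0}\mathbb{P}$. A general product reduces to these building blocks by induction on total degree and on the gap between the seeds supporting $b$ and $b'$.

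Linear independence I would prove by a leading-term argument inside the quantum torus $\mathcal{T}$, whose monomials $\{X^{\mathbf c}\}$ form a free $\mathbb{Z}[q^{\pm\frac{1}{2}}]$-basis. Fix a total order on the mutable exponent lattice $\mathbb{Z}^3$, and for $b\in\mathcal{B}$ let $\phi(b)\in\mathbb{Z}^3$ be the mutable part of the extremal monomial of $b$; for a cluster monomial this is its (quantum) $g$-vector, and for $u_n w^k,\,u_n z^k$ it is read off from the explicit expansion of $u_1$, the recursion for $u_n$, and the exponents of $w$ resp.\ $z$. The key point is that multiplying by an element of $\mathbb{ZP}$ only alters the last three (coefficient) coordinates of an exponent, hence leaves $\phi$ unchanged; so in any relation $\sum_b c_b b=0$ the element $b_0$ maximizing $\phi$ among $\{b:c_b\neq0\}$ contributes monomials whose mutable part $\phi(b_0)$ occurs in no other term, forcing $c_{b_0}=0$ since $\mathcal{T}$ is a domain. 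This reduces independence to the injectivity of $\phi$ on $\mathcal{B}$: distinct cluster monomials have distinct $g$-vectors, while the extremal mutable exponents of the $u_n w^k$ and $u_n z^k$ lie in the imaginary direction and are separated both from one another and from all $g$-vectors.

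The main obstacle is the closure/positivity step, and within it the genuinely mixed products $u_n w^k\cdot u_p z^{k'}$ together with products of cluster monomials supported on far-apart seeds, since these are not produced by any single formula of Section~3. For the mixed case I would first commute all factors via Lemmas~\ref{2} and~\ref{regular}, then rewrite $wz=u_1+q^{-\frac{1}{2}}X^{(0,0,0,1,0,1)}+q^{\frac{1}{2}}X^{(0,0,0,0,1,0)}$ and collapse the resulting powers of $u_1$ using the Chebyshev-type recursion and Lemma~\ref{3}, checking that each step keeps the coefficients in $\mathbb{Z}_{\geq 0}\mathbb{P}$; the distant-seed products are resolved by iterating the pairwise formulas and tracking non-negativity throughout. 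By contrast the linear independence, once the extremal exponents of the $u$-elements are computed, is routine.
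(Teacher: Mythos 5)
Your proposal is correct in outline and, for three of the four ingredients, follows the same route as the paper: bar-invariance via $\overline{fg}=\overline{g}\,\overline{f}$ together with Lemma~\ref{0} and the commutation relations of Lemma~\ref{regular}, and spanning plus positivity of structure constants by reducing an arbitrary product of elements of $\mathcal{B}$ to the explicit formulas of Lemma~\ref{3}, Theorem~\ref{5}, Theorems~\ref{7} and~\ref{9} (with Remark~\ref{opp} for the reversed order), whose coefficients are visibly in $\mathbb{Z}_{\geq 0}\mathbb{P}$; the paper compresses the reduction of general products into a reference to the analogous induction in \cite[Subsection 6.3]{CI-1}, which is essentially your ``induction on total degree and on the gap between seeds.'' Where you genuinely diverge is linear independence. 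The paper first establishes that every element of $\mathcal{B}$ is a \emph{positive} element (using the positivity of quantum cluster variables from \cite{KQ,fanqin1} and Theorem~\ref{5} to get positivity of the $u_n$), concludes that the Laurent support of $b$ is the same as that of $b|_{q=1}$, and then imports linear independence from the classical case \cite[Subsection 6.1]{CI-1}. You instead run a leading-exponent (pointedness/$g$-vector) triangularity argument directly in the quantum torus. Your route avoids the deep positivity theorems, but it silently relies on two nontrivial inputs you do not supply: that quantum cluster monomials are pointed with respect to a suitable order (so that $\phi$ is well defined and the extremal coefficient is a unit) and that distinct quantum cluster monomials have distinct $g$-vectors; in this rank-3 setting these can be checked from the explicit seeds of Lemma~\ref{LemmaAlgStructure}, but that verification, together with the computation of the extremal exponents of $u_nw^k$ and $u_nz^k$, is real work that your sketch defers. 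Conversely, the paper's route costs the citation of the positivity literature but yields elementwise positivity of the basis elements as a by-product, which underlies the ``atomic'' interpretation in Remark~\ref{atomic}. Either argument completes the proof once its external inputs are granted.
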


\begin{proof}
By Lemma~\ref{0}, Lemma \ref{regular} and the fact that quantum cluster monomials are bar-invariant,  any element in $\mathcal{B}$ is bar-invariant.

According to Lemma~\ref{3}, Theorem~\ref{5},  Theorem~\ref{7}, Theorem~\ref{9}, Remark \ref{opp},  and the same arguments as \cite[Subsection 6.3]{CI-1}, one can deduce
that the set $\mathcal{B}$ spans  $\mathcal{A}_{q} (Q)$ over $\mathbb{ZP}$.

The positivity of quantum cluster variables follows from \cite{KQ,fanqin1}, and combining  with Theorem~\ref{5}, we  obtain that  $u_n  (n\geq 1)$ are positive.
Note that the Laurent monomials  in the expansion of $b\in \mathcal{B}$ in any cluster are corresponding to those  of $b|_{q=1}$ due to the positivity of elements in $\mathcal{B}$.
Thus, using the same arguments as \cite[Subsection 6.1]{CI-1}, we can deduce that the elements in $\mathcal{B}$ are linearly independent over $\mathbb{ZP}$.

Again from Lemma~\ref{3}, Theorem~\ref{5},  Theorem~\ref{7}, Theorem~\ref{9} and Remark \ref{opp},    it follows  that  the
structure constants belong to $\mathbb{Z}_{\geq 0}\mathbb{P}$.
\end{proof}

\begin{remark}\label{atomic}
By specializing $q=1$, the obtained basis  is exactly atomic in \cite{CI-1}.
\end{remark}

\section*{Acknowledgments}
Ming Ding was supported by NSF of China (No. 12371036) and  Guangdong Basic and Applied Basic Research
Foundation (2023A1515011739), and Fan Xu was supported by NSF of China (No. 12031007).

\end{document}